\numberwithin{equation}{section}
\newcommand{\B}{\mathcal{B}}
\newcommand{\OO}{\mathcal{O}}
\newcommand{\RR}{\mathbb{R}}
\newcommand{\CC}{\mathbb{C}}
\newcommand{\QQ}{\mathbb{Q}}
\newcommand{\ZZ}{\mathbb{Z}}
\newcommand{\FF}{\mathbb{F}}
\newcommand{\M}{\mathsf{M}}
\DeclareMathOperator{\End}{End}
\DeclareMathOperator{\Gal}{Gal}
\DeclareMathOperator{\ch}{char}
\DeclareMathOperator{\Hom}{Hom}
\DeclareMathOperator{\Aut}{Aut}
\DeclareMathOperator{\Nrd}{Nrd}
\DeclareMathOperator{\Trd}{Trd}
\DeclareMathOperator{\discrd}{discrd}
\DeclareMathOperator{\disc}{disc}
\DeclareMathOperator{\sgn}{sgn}
\DeclareMathOperator{\llog}{llog}
\newtheorem*{thm*}{Theorem}
\newtheorem{prob}{Problem}
\newtheorem{thm}{Theorem}[section]
\newtheorem{prop}[thm]{Proposition}
\newtheorem{lem}[thm]{Lemma}
\newtheorem{cor}[thm]{Corollary}
\theoremstyle{definition}
\newtheorem{defn}[thm]{Definition}
\newtheorem{rmk}[thm]{Remark}
\newtheorem{heuristic}[thm]{Heuristic}
\newtheorem*{rmk*}{Remark}
\newtheorem*{ex*}{Example}
\DeclareMathOperator{\size}{size}
\DeclarePairedDelimiter\floor{\lfloor}{\rfloor}
\definecolor{jennycolor}{RGB}{37,150,190}
\definecolor{Changningphaabicolor}{RGB}{255,140,0}
 \definecolor{annacolor}{RGB}{37,150,115}
 \definecolor{markcolor}{RGB}{199,21,133}
     \definecolor{todo}{RGB}{255,0,0}
\title{Computing supersingular endomorphism rings using inseparable endomorphisms}
\author[1]{\small Jenny Fuselier}
\author[2,3,4]{\small Annamaria Iezzi}
\author[5]{\small Mark Kozek}
\author[6]{\small Travis Morrison}
\author[7]{\small Changningphaabi Namoijam}
\affil[1]{\footnotesize Department of Mathematical Sciences, High Point University, High Point, NC 27268, USA}
\affil[2]{\footnotesize Univ. Grenoble Alpes, CNRS, Grenoble INP, LJK, 38000 Grenoble, France}
\affil[3]{\footnotesize Dipartimento di Matematica e Applicazioni ``Renato Caccioppoli", Università degli Studi di Napoli Federico II, I-80126 Napoli, Italy }
\affil[4]{\footnotesize Laboratoire GAATI, Université de la Polynésie française, 98702 Faaa, French Polynesia}
\affil[5]{\footnotesize Department of Mathematics \& Computer Science, Whittier College, Whittier, CA 90601, USA}
\affil[6]{\footnotesize Department of Mathematics, Virginia Tech, Blacksburg, VA 24060 USA}
\affil[7]{\footnotesize Department of Mathematics, Colby College, Waterville, ME 04901, USA}
\date{}
\begin{document}

\maketitle

\begin{abstract}
  We give an algorithm for computing an inseparable endomorphism of a supersingular elliptic curve $E$ defined over $\FF_{p^2}$, which, conditional on GRH, runs in expected $O(p^{1/2}(\log p)^2(\log\log p)^3)$  bit operations and requires $O((\log p)^2)$ storage. This matches the  time and storage complexity of the best conditional algorithms for computing a nontrivial supersingular endomorphism, such as those of Eisentr\"{a}ger--Hallgren--Leonardi--Morrison--Park  and Delfs--Galbraith. Unlike these prior algorithms, which require two paths from $E$ to a curve defined over $\FF_p$, the algorithm we introduce only requires one; thus when combined with the algorithm of Corte-Real Santos--Costello--Shi,  our algorithm will be faster in practice. Moreover, our algorithm produces endomorphisms with predictable discriminants, enabling us to prove properties about the orders they generate. With two calls to our algorithm, we can provably compute a Bass suborder of $\End(E)$. This result is then used in an algorithm for computing a basis for $\End(E)$ with the same time complexity, assuming GRH. We also argue that $\End(E)$ can be computed using $O(1)$ calls to our algorithm along with polynomial overhead,  conditional on a heuristic assumption about the distribution of the discriminants of these  endomorphisms. Conditional on GRH and this additional heuristic, this yields a   $O(p^{1/2}(\log p)^2(\log\log p)^3)$ algorithm for computing $\End(E)$ requiring $O((\log p)^2)$ storage.

\end{abstract}

\section{Introduction}

Let $E$ be an elliptic curve defined over a finite field $\FF_q$, where $q$ is a  power of a prime $p$. If $E$ is ordinary, in order to compute the (geometric) endomorphism ring $\End(E)$ of $E$, one must determine the index  $[\End(E):\ZZ[\pi_E]]$ where $\ZZ[\pi_E]$ is the order generated by the Frobenius endomorphism $\pi_E$ of $E$. This problem has been well-studied, and there exist algorithms for computing the endomorphism ring of an ordinary elliptic curve due to Bisson and Sutherland~\cite{BS2011} which run in expected subexponential time, conditional on reasonable heuristics including the Generalized Riemann Hypothesis (GRH). Recently, Robert~\cite{Rob23applications} showed that, given access to a factoring oracle, there is a polynomial-time algorithm for computing the endomorphism ring of an ordinary elliptic curve. 

When $E$ is supersingular, its endomorphism algebra $\End^0(E)\coloneqq \End(E)\otimes \QQ$ is a quaternion algebra, and $\End(E)$ is a maximal order of $\End^0(E)$. In this case, there is no canonical imaginary quadratic order which embeds in $\End(E)$. Even worse, if we have a suborder $\Lambda\subseteq\End(E)$, there can be exponentially (in $\log(\disc(\Lambda))$) many pairwise non-isomorphic maximal orders  which contain $\Lambda$. This stands in contrast to the ordinary case where we have a canonical embedding of a finite-index suborder and there is a unique maximal order containing both this suborder and $\End(E)$: this maximal order is the ring of integers of the imaginary quadratic number field $\QQ(\pi_E)\cong  \End^0(E)$. %

This suggests that computing the endomorphism ring of a supersingular elliptic curve is a hard problem. More precisely, there are no known efficient algorithms for solving the {\em endomorphism ring} problem: 
\begin{prob}\label{prob:EndE}
    Given  a supersingular elliptic curve $E$ defined over $\FF_{p^2}$,  compute the endomorphism ring $\End(E)$ of $E$, that is, compute a basis of the maximal order $\mathcal O$ in the quaternion algebra $B_{p,\infty}$ such that $\End(E)\cong  \mathcal O$. 
\end{prob}
The assumption that Problem~\ref{prob:EndE}  is hard is central to the security of isogeny-based cryptography. Indeed, in isogeny-based cryptosystems, a secret key is an isogeny of large, smooth degree between two supersingular elliptic curves -- a path in the supersingular isogeny graph -- and the problem of path-finding in supersingular isogeny graphs has been proven to be equivalent to the problem of computing supersingular endomorphism rings, assuming GRH (see Eisentr\"{a}ger, Hallgren, Lauter, Morrison, and Petit~\cite{EHLMP}, Wesolowski \cite{Wes22} and Page and Wesolowski \cite{PW24}).

There are two approaches to computing the endomorphism ring of a supersingular elliptic curve $E$. One approach uses the reduction of~\cite{EHLMP} to path-finding in isogeny graphs, and the second, perhaps more straightforward approach, involves computing endomorphisms of $E$ until computing a generating set for $\End(E)$. In the first approach, one computes a supersingular curve $E_0$ with known or easily computable endomorphism ring, which can be done efficiently assuming GRH with Br\"oker's algorithm~\cite{Brok09}, and then computes an isogeny $E_0\to E$. With the isogeny $E_0\to E$ and $\End(E_0)$, one can efficiently compute $\End(E)$ via the reduction in~\cite{EHLMP}. However,  in isogeny-based cryptosystems such as SQIsign~\cite{sqisign},  there is a curve $E_0$ with known endomorphism ring as a public parameter for the cryptosystem, a user's public key is another supersingular elliptic curve $E$, and their corresponding private key is a secret isogeny $E_0\to E$. Thus  the reduction to path-finding gives a roundabout attack: any  isogeny $E_0\to E$ is (at least functionally) the secret key that an attacker wishes to compute, so an attacker   would not need $\End(E)$ after having computed any isogeny $E_0\to E$. This motivates an investigation into the second approach to computing $\End(E)$ and hence into the design of algorithms for computing a single endomorphism of $E$. This paper focuses on the design of such an algorithm and an investigation into what can be proved about an order generated by the endomorphisms output by a few calls to that algorithm. 

 The first algorithm for computing nontrivial endomorphisms of a supersingular elliptic curve $E$ is due to Kohel~\cite{Koh96} and runs in time 
$O(p^{1+\epsilon})$ for any $\epsilon>0$; in Kohel's strategy one first computes a spanning tree of the $\ell$-isogeny graph rooted at $E$ and then adds two edges to produce two cycles and thus two endomorphisms of $E$. These two cycles generate a suborder. Delfs and Galbraith~\cite{DG16} compute an endomorphism of $E$ by finding  two distinct isogenies $\psi_i\colon E\to E_i$ to two distinct $\FF_p$-rational curves $E_i$, $i=1,2$, solve the easier problem of path-finding in the $\FF_p$-rational isogeny graph of $\FF_p$-rational supersingular elliptic curves to compute an isogeny $\phi\colon E_1\to E_2$, and return the endomorphism $\widehat{\psi_2}\circ \phi\circ \psi_1$. The complexity of finding an isogeny $\psi_i\colon E\to E_i$ with $E_i$ defined over $\FF_p$ is $\widetilde{O}(p^{1/2}),$ conditional on GRH, while the complexity of the algorithm of~\cite{DG16} for  pathfinding in the $\FF_p$-subgraph is $\widetilde{O}(p^{1/4})$.  Eisentr\"{a}ger, Hallgren, Leonardi, Morrison, and Park~\cite{EHLMP20} give an algorithm for computing a cycle, based at $E$, in the $\ell$-isogeny graph $G(p,\ell)$ by finding two distinct isogenies $\phi_i\colon E\to E^{(p)}$, $i=1,2$, where $E^{(p)}$ is the codomain of the $p$-power Frobenius isogeny $\pi\colon E\to E^{(p)}$. Then $\widehat{\phi_2}\circ\phi_1$ is an endomorphism of $E$.  An isogeny $E\to E^{(p)}$ is computed by first using random walks in the $\ell$-isogeny graph to find an isogeny $\psi_1\colon E\to E_1$ where $E_1$ is defined over $\FF_p$; then $\widehat{\psi_1^{(p)}}\circ\psi_1$ is an isogeny $E\to E^{(p)}$, where $\psi^{(p)}$ is the isogeny obtained by the action of the Frobenius automorphism on $\psi$ (see Section~\ref{subsec:Galoisinvolution}). The latter two algorithms for computing a nontrivial endomorphism have the same asymptotic complexity, since both require two paths from $E$ to the $\FF_p$-subgraph, but the algorithm of~\cite{EHLMP20} is strictly faster since the overhead is polynomial in $\log p$ compared to the exponential overhead $\widetilde{O}(p^{1/4})$ required to find a path in the $\FF_p$-subgraph in the algorithm of~\cite{DG16}.

In this paper we introduce an algorithm, Algorithm~\ref{alg:insependo}, for computing certain inseparable endomorphisms of $E$ which we define as {\em inseparable reflections} in Section~\ref{sec:inseparables}. The idea is simple: to compute an inseparable endomorphism of $E$, compute an isogeny $\psi\colon E\to E^{(p)}$ as described above, and return $\pi\circ\psi$ where $\pi$ is the $p$-power Frobenius.  Assuming the Generalized Riemann Hypothesis, it terminates in expected $\widetilde{O}(p^{1/2})$ bit operations, it is low storage and easy to parallelize (unlike a generic low-storage collision algorithm such as Pollard's $\rho$). Algorithm~\ref{alg:insependo} is twice as fast as the algorithms in~\cite{DG16, EHLMP20} since it requires only one isogeny to a $\FF_p$-rational curve, rather than 2. Thus Algorithm~\ref{alg:insependo} reflects the state-of-the-art in conditional algorithms for computing a nontrivial endomorphism of a supersingular elliptic curve (when combined with the algorithm of Santos--Costello--Shi~\cite{CCS22} for fast subfield detection). One might suspect that the fact that the output is always an inseparable endomorphism of the input curve $E$ might be an obstacle if one was trying to use this algorithm to compute a generating set for $\End(E)$. To the contrary, we show that Algorithm~\ref{alg:insependo} improves on all previous algorithms for computing endomorphisms in a second way: we are able to control the arithmetic properties of orders generated by endomorphisms output by Algorithm~\ref{alg:insependo}. More precisely, Algorithm~\ref{alg:insependo} has auxiliary inputs $\ell$ (a prime) and $d$ (a positive square-free integer) that can be chosen so that  the output endomorphism generates an imaginary quadratic order that is maximal at every prime except at $\ell$. As a consequence, with an appropriate choice of the auxiliary inputs, two endomorphisms output by the algorithm will generate a Gorenstein order (Proposition~\ref{prop:gorenstein}) and, with slightly more care, a Bass order (\ref{thm:bass_from_inseps}). 

We give two algorithms for computing $\End(E)$ using Algorithm~\ref{alg:insependo}: a rigorous (assuming GRH) algorithm and a simple but heuristic algorithm. Let us first outline the rigorous version of the algorithm. First,  Algorithm~\ref{alg:bass} calls Algorithm~\ref{alg:insependo} twice to produce a basis for a Bass suborder of $\End(E)$:
\begin{thm*}[Theorem~\ref{thm:makebass}]
    On input  a supersingular elliptic curve $E$ defined over $\FF_{p^2}$, Algorithm~\ref{alg:bass} computes a basis of a Bass suborder of $\End(E)$. Assuming the Generalized Riemann Hypothesis, the algorithm terminates in an expected $O(p^{1/2}(\log p)^2(\log\log p)^3)$ number of bit operations. 
\end{thm*}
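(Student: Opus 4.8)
The plan is to analyze Algorithm~\ref{alg:bass} in two parts, correctness and running time, leaning throughout on the structural facts about inseparable reflections recorded earlier.

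\emph{Correctness.} Algorithm~\ref{alg:bass} runs Algorithm~\ref{alg:insependo} twice on the same curve $E$, but with two different choices of the auxiliary inputs $(\ell_1,d_1)$ and $(\ell_2,d_2)$, producing inseparable reflections $\theta_1,\theta_2\in\End(E)$. By the guaranteed behavior of Algorithm~\ref{alg:insependo}, each $\ZZ[\theta_i]$ is an imaginary quadratic order whose discriminant is predictable from $(\ell_i,d_i,p)$ and which is maximal at every prime distinct from $\ell_i$. The first thing I would do is argue that the parameters can be selected so that the hypotheses of Theorem~\ref{thm:bass_from_inseps} hold: in particular $\ell_1\neq\ell_2$, the two quadratic orders sit inside $\End(E)$ in the ``linked'' configuration required there, and $1,\theta_1,\theta_2,\theta_1\theta_2$ are $\QQ$-linearly independent (equivalently $\theta_1,\theta_2$ do not lie in a common quadratic subfield of $\End^0(E)$). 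Generically two such endomorphisms already generate $\End^0(E)$; in the exceptional event that they commute, Algorithm~\ref{alg:bass} simply re-runs the second call with fresh randomness, which succeeds after $O(1)$ attempts in expectation. Granting these conditions, Theorem~\ref{thm:bass_from_inseps} shows that the order $\OO'$ generated by $\theta_1,\theta_2$ is a Bass order, and it is a suborder of $\End(E)$ since $\theta_1,\theta_2\in\End(E)$.

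\emph{Producing the basis.} To actually output a $\ZZ$-basis, the algorithm computes the Gram matrix of the reduced trace pairing $(\alpha,\beta)\mapsto\Trd(\alpha\overline{\beta})$ on the tuple $(1,\theta_1,\theta_2,\theta_1\theta_2)$. The reduced norms $\Nrd(\theta_i)=\deg\theta_i$ and reduced traces $\Trd(\theta_i)$ are known from the predictable discriminants, so the only nontrivial entry is the linking trace $t=\Trd(\theta_1\overline{\theta_2})$, an integer of absolute value at most $2\sqrt{\Nrd(\theta_1)\Nrd(\theta_2)}=p^{O(1)}$. This can be recovered by computing $t\bmod N$ for a few small primes $N$ coprime to the relevant degrees, by evaluating the isogeny representations of $\theta_1$ and $\theta_2$ on a basis of $E[N]$, and applying the Chinese Remainder Theorem; all of this is polynomial in $\log p$. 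From the Gram matrix a Hermite-normal-form computation over $\ZZ$ yields a $\ZZ$-basis of $\OO'$ in time polynomial in $\log p$, and one can cheaply check that $\disc(\OO')$ has the shape predicted by Theorem~\ref{thm:bass_from_inseps} as a sanity check.

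\emph{Running time.} The cost is dominated by the two calls to Algorithm~\ref{alg:insependo}, each of which runs, conditional on GRH, in expected $O(p^{1/2}(\log p)^2(\log\log p)^3)$ bit operations using $O((\log p)^2)$ storage; the parameter selection and the linear-algebra and point-evaluation steps above add only polynomial-in-$\log p$ overhead, and the expected number of re-runs is $O(1)$. Hence the total is $O(p^{1/2}(\log p)^2(\log\log p)^3)$ expected bit operations, as claimed. The step I expect to be the main obstacle is the first one in the correctness argument: showing that admissible auxiliary parameters meeting all the local hypotheses of Theorem~\ref{thm:bass_from_inseps} always exist and can be found efficiently. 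This is where one must combine the (conditional) prime-counting and equidistribution inputs with the explicit description of exactly which imaginary quadratic orders occur as $\ZZ[\theta]$ for an inseparable reflection $\theta$ of the given curve $E$; once that is settled, the remainder is bookkeeping.
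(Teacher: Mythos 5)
Your overall strategy is the same as the paper's: call Algorithm~\ref{alg:insependo} twice with distinct primes $\ell_1\neq\ell_2$, invoke Theorem~\ref{thm:genorder} and Theorem~\ref{thm:bass_from_inseps} for correctness, and charge the runtime to Proposition~\ref{prop:specialendoruntime}. However, the step you flag as ``the main obstacle'' --- exhibiting admissible parameters satisfying the local hypotheses of Theorem~\ref{thm:bass_from_inseps} --- is left open in your write-up, and it is in fact a non-issue requiring no equidistribution or prime-counting input. The algorithm is run on the \emph{fixed} input $(E,3,5,2)$, i.e.\ both calls use the \emph{same} square-free $d=2$ (your notation $(\ell_i,d_i)$ suggests two different $d$'s, which is not what Theorem~\ref{thm:bass_from_inseps} is set up for: it needs both reflections to generate orders in the single field $\QQ(\sqrt{-dp})$ with coprime conductors). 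With these inputs, the output of Algorithm~\ref{alg:insependo} is an inseparable reflection of degree $d_i^2dp$ with $d_1=3^{k_1}$, $d_2=5^{k_2}$; these and $d=2$ are automatically pairwise coprime, $-2p\equiv 2\pmod 4$, and Proposition~\ref{prop:trace_zero_insep} gives $\alpha_i^2=-d_i^2dp$, so $\ZZ[\alpha_i]\cong\ZZ[d_i\sqrt{-dp}]$ has conductor exactly $d_i$ in the maximal order $\ZZ[\sqrt{-dp}]$. That is the entire verification. The analytic inputs (GRH, mixing of non-backtracking walks, Littlewood's class number bound) enter only the runtime, and are already packaged inside Proposition~\ref{prop:specialendoruntime}.

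Two further complications in your proposal are unnecessary. First, the ``exceptional event that they commute'' cannot occur: $\ker\phi_1$ and $\ker\phi_2$ are nontrivial cyclic groups of coprime orders $\ell_1^{k_1}$ and $\ell_2^{k_2}$ (the walk in Algorithm~\ref{alg:insependo} has length at least $1$), hence are distinct, and Theorem~\ref{thm:genorder} then guarantees $\alpha_1\alpha_2\neq\alpha_2\alpha_1$ deterministically; no re-running with fresh randomness is needed. Second, no Gram-matrix or Hermite-normal-form computation is needed to ``produce the basis'': Proposition~\ref{prop:gorenstein}(\ref{prop:gor:order}) already shows that $1,\alpha_1,\alpha_2,\alpha_1\alpha_2$ is itself a $\ZZ$-basis of the order it spans, and Algorithm~\ref{alg:bass} simply returns these four endomorphisms in compact form (as compositions of low-degree isogenies). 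The Schoof-type trace computation you describe is real but belongs to the later Algorithm~\ref{alg:endo_ring}, where the order must be transported into an abstract quaternion algebra. With these adjustments your argument collapses exactly onto the paper's proof.
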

From a theoretical viewpoint, it suffices to compute a Bass suborder $\OO$ of $\End(E)$: by~\cite[Proposition 5.2]{EHLMP20}, the number of maximal overorders containing a given Bass order $\Lambda$ is bounded by a quantity growing subexponentially in the size of $\Lambda$\footnote{Actually, this is proven in ~\cite{EHLMP20} under the additional assumption that $\Lambda$ is hereditary (i.e. its reduced discriminant is square-free),  but this assumption is not necessary; see the proof of Theorem~\ref{thm:bass_from_inseps}.}, and one can efficiently enumerate these maximal overorders.  Using algorithms from~\cite{KLPT, EHLMP, Wes22}, one can efficiently decide whether a given maximal order $\OO\supseteq\Lambda$ is isomorphic to $\End(E)$. Building on ideas in~\cite{EHLMP20}, Algorithm~\ref{alg:endo_ring} computes $\End(E)$  by computing a Bass suborder $\Lambda$ of $\End(E)$, and then enumerates maximal orders $\OO\supseteq \Lambda$ until finding $\OO\cong \End(E)$. In Section~\ref{sec:algorithms}, we show that one can remove the heuristic assumptions needed in~\cite{EHLMP20} and we prove the following theorem:

\begin{thm*}[Theorem~\ref{thm:EndE}]
    There exists an algorithm (Algorithm~\ref{alg:endo_ring}) which takes as input a supersingular elliptic curve $E$ defined over $\FF_{p^2}$ and returns a basis of a maximal order $\OO$ contained in the quaternion algebra ramified at $\{p,\infty\}$ such that $\End(E)\cong  \OO$. Assuming the Generalized Riemann Hypothesis, Algorithm~\ref{alg:endo_ring} terminates in an expected $O(p^{1/2}(\log p)^2(\log\log p)^3)$ number of bit operations. 
\end{thm*}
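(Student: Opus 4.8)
The plan is to combine the Bass-suborder construction (Theorem~\ref{thm:makebass}) with an enumeration-and-verification strategy, carefully tracking complexity so that everything remains within $O(p^{1/2}(\log p)^2(\log\log p)^3)$ bit operations under GRH. First I would invoke Algorithm~\ref{alg:bass} to produce a basis of a Bass suborder $\Lambda\subseteq\End(E)$; by Theorem~\ref{thm:makebass} this costs $O(p^{1/2}(\log p)^2(\log\log p)^3)$ bit operations. The key structural input is \cite[Proposition 5.2]{EHLMP20}: the set of maximal orders of $B_{p,\infty}$ containing a Bass order $\Lambda$ has size subexponential in $\log\disc(\Lambda)$, and — as the footnote in the excerpt indicates, removing the hereditary hypothesis — these maximal overorders can be efficiently enumerated from the local structure of $\Lambda$ at the finitely many primes dividing its reduced discriminant. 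So I would enumerate this candidate set $\{\OO_1,\dots,\OO_k\}$, and for each candidate decide whether $\OO_i\cong\End(E)$.

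The verification step is where I would lean on the algorithmic toolkit of \cite{KLPT}, \cite{EHLMP}, and \cite{Wes22}: given a maximal order $\OO$ and the curve $E$, one can test whether $\OO\cong\End(E)$ by attempting to translate a putative isomorphism into an isogeny (equivalently, a connecting ideal computation / KLPT-style norm equation solving) and checking it against $E$; this runs in time polynomial in $\log p$ and in the size of the input order, again assuming GRH. Since $\disc(\Lambda)$ is polynomial in $\log p$ by the controlled-discriminant property of Algorithm~\ref{alg:insependo} feeding into Algorithm~\ref{alg:bass} (the auxiliary parameters $\ell,d$ are chosen of size polynomial in $\log p$), the number $k$ of candidates is subexponential in $\log\log p$, hence polynomial in $\log p$, and each verification is polynomial in $\log p$. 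Therefore the entire enumerate-and-verify phase contributes only a $\mathrm{poly}(\log p)$ overhead, and the dominant term remains the single call to Algorithm~\ref{alg:bass}. Correctness is immediate: $\End(E)$ is one of the maximal orders containing $\Lambda$, so it appears in the enumeration and is detected by the verification subroutine.

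The main obstacle I anticipate is purely bookkeeping rather than conceptual: one must verify that the bound on the number of maximal overorders of a Bass (not necessarily hereditary) order, together with the sizes of all intermediate objects, genuinely stays subexponential in $\log\log p$ — i.e., that the reduced discriminant of $\Lambda$ produced by two calls to Algorithm~\ref{alg:insependo} is controlled well enough (this is exactly the payoff of the ``predictable discriminants'' feature advertised in the introduction, and is presumably established in the proof of Theorem~\ref{thm:bass_from_inseps}). A secondary point requiring care is making the enumeration of maximal overorders of a Bass order \emph{effective} without the hereditary assumption; I would handle this by working prime-by-prime at the (polynomially many, polynomially bounded) primes dividing $\discrd(\Lambda)$, using the local classification of Bass orders over $\ZZ_p$ to list the finitely many local maximal overorders and then gluing via the local-global correspondence for orders in $B_{p,\infty}$. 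Once these pieces are in place, the time bound follows by adding the single $\widetilde{O}(p^{1/2})$ cost of Algorithm~\ref{alg:bass} to a $\mathrm{poly}(\log p)$ overhead.
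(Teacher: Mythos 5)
Your overall strategy is the paper's: one call to Algorithm~\ref{alg:bass} to get a Bass order $\Lambda\subseteq\End(E)$, then enumerate the maximal overorders of $\Lambda$ locally at the primes dividing $\discrd(\Lambda)$ and test each candidate against $E$ via the effective Deuring correspondence of~\cite{EHLMP,Wes22}. The structure is right, but your complexity bookkeeping contains a concrete error. The order produced by Algorithm~\ref{alg:bass} has $\discrd(\Lambda)=p^2\lvert\disc(\rho)\rvert$ where $\rho=-\alpha_1\alpha_2/p$ is an isogeny of degree $2^2 3^{2k_1}5^{2k_2}$ with $k_1,k_2=O(\log p)$; hence $\lvert\disc(\rho)\rvert=O(p^C)$ for some constant $C$. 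So $\disc(\Lambda)$ is polynomial in $p$ (its \emph{bit-size} is polynomial in $\log p$), not polynomial in $\log p$ as you assert. Consequently the number of maximal overorders --- bounded by the divisor count of $\discrd(\Lambda)/p^{v_p(\discrd(\Lambda))}$ --- is $O(p^{\epsilon})$ for every $\epsilon>0$, i.e.\ subexponential in $\log p$, and emphatically not $\mathrm{poly}(\log p)$. The theorem survives because $O(p^{\epsilon})\cdot\mathrm{poly}(\log p)$ is still dominated by the $O(p^{1/2}(\log p)^2(\log\log p)^3)$ cost of the Bass-order step, but the claimed polylogarithmic overhead is false as stated.

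Two further steps are missing from your sketch. First, the primes dividing $\discrd(\Lambda)$ are not ``polynomially bounded''; they can be as large as $\disc(\rho)=O(p^C)$, and to even begin the prime-by-prime local enumeration one must \emph{factor} $\disc(\rho)$. The paper does this with the rigorous subexponential factoring algorithm of Lenstra--Pomerance, which again is absorbed into the dominant term but cannot be waved away. Second, before enumerating overorders one needs $\Lambda$ presented as an order in an explicit quaternion algebra $H(a,b)$: this requires computing the Gram matrix of the basis $1,\alpha_1,\alpha_2,\alpha_1\alpha_2$, i.e.\ computing $\Trd(\alpha_1\alpha_2/p)$ with a Schoof-type point-counting algorithm applied to the cyclic isogeny $\rho$ (possible in polynomial time precisely because $\deg\rho$ is smooth and of size $p^{O(1)}$), followed by Gram--Schmidt to extract $a,b$. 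Your proposal jumps straight from ``we have a basis of $\Lambda$ as endomorphisms'' to ``enumerate overorders in $B_{p,\infty}$'' without this translation.
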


 Summarizing the above theorems, two endomorphisms produced by Algorithm~\ref{alg:insependo} generate a Bass suborder of $\End(E)$, and with subexponential overhead, we compute a basis for $\End(E)$. 
 
 A more straightforward approach to computing $\End(E)$ is to compute several endomorphisms until finding a generating set. Suppose we have an algorithm which generates a random endomorphism of a supersingular elliptic curve $E$ defined over $\FF_{p^2}$. What is the expected number of  calls to that algorithm before  finding a set of endomorphisms which generate $\End(E)$ as an order? In~\cite{GPS}, Galbraith, Petit, and Silva give a heuristic argument that this expectation is $O(\log p)$. Our work in Section~\ref{sec:heuristic} suggests that this estimate is pessimistic: the expected number of calls is bounded by a constant, assuming a reasonable heuristic on the distribution of the discriminants of random endomorphisms. 
 
 In this paper, we focus on computing $\End(E)$ with  endomorphisms output by Algorithm~\ref{alg:insependo}.
We first remark that no collection of inseparable endomorphisms can generate $\End(E)$: the endomorphisms produced by Algorithm~\ref{alg:insependo} are inseparable and hence belong to $P$,  the $2$-sided ideal of inseparable endomorphisms of $E$. We show in Proposition~\ref{prop:z+p} that $\ZZ+P$ is the unique suborder of $\End(E)$ of index $p$, and the only maximal order containing $\ZZ+P$ is $\End(E)$. In Section~\ref{sec:heuristic}, we show that the expected number of calls to Algorithm~\ref{alg:insependo} before finding a generating set for $\ZZ+P$ is bounded by a positive constant that is not dependent on either $p$ or $E$, assuming Heuristic~\ref{heuristic:coprime} which concerns the distribution of discriminants of endomorphisms produced by Algorithm~\ref{alg:insependo}.  Finally, with a basis of  $\ZZ+P$, one can efficiently compute a basis of $\End(E)$ using algorithms due to Voight~\cite{V2013}. While this approach is no faster than the enumeration-style approach in Algorithm~\ref{alg:endo_ring}, it is simpler to implement: it requires only an implementation of Algorithm~\ref{alg:insependo}, an implementation of a generalization~\cite{BCEMP} of Schoof's algorithm~\cite{SchoofSEA}, and linear algebra to compute a basis for an order in the quaternion algebra isomorphic to $\End(E)$. Our implementation of this algorithm and all necessary subroutines in SageMath~\cite{sagemath} is available at~\url{https://github.com/travismo/inseparables}.

In conclusion, we prove that two calls to Algorithm~\ref{alg:insependo} produce a Bass order unconditionally, and, assuming  Heuristic~\ref{heuristic:coprime}, only $O(1)$ calls to Algorithm~\ref{alg:insependo} (along with subexponential overhead) produce $\End(E)$. Alternatively, by Theorem 1.1 of~\cite{ES24}, two calls to Algorithm~\ref{alg:insependo}, the factorization of the discriminant of the order generated by the output, and polynomial overhead will produce $\End(E)$. 

Note that by Theorem 8.8 of~\cite{PW24}, there is an algorithm for computing $\End(E)$ in $\widetilde{O}(p^{1/2})$ bit operations unconditionally, but the algorithm also requires $\widetilde{O}(p^{1/2})$ storage. On the other hand, the heuristic algorithm for computing $\End(E)$ we provide in Section~\ref{sec:heuristic} -- compute inseparable endomorphisms with Algorithm~\ref{alg:insependo} until finding a basis for $\ZZ+P$ and then recover a basis for $\End(E)$ with linear algebra --  requires only $\text{polylog}(p)$ storage and terminates in expected $O(p^{1/2}(\log p)^2(\log\log p)^3)$ bit operations. Our  provable variant has the same asymptotic time complexity, and the storage complexity is determined by the storage used to factor a single integer of magnitude $O(p^4)$. 

The paper is organized as follows. In Section \ref{sec:background}, we review the mathematical background of the paper and fix our notation.  In Section \ref{sec:inseparables}, we study the properties of the suborder $\mathbb Z+P\subseteq \End(E)$, where $P$ is the ideal of inseparable endomorphisms of $E$. We also define inseparable reflections, building on the definition of $(d,\epsilon)$-structures of Chenu and Smith~\cite{CS21}, and study the structure of quaternionic orders generated by inseparable reflections. In particular we determine when they generate Gorenstein (Proposition~\ref{prop:gorenstein}) and Bass (Theorem~\ref{thm:bass_from_inseps}) orders in $\End(E)$. Section \ref{sec:algorithms}  makes the ideas in Section~\ref{sec:inseparables} effective. First, we analyze Algorithm~\ref{alg:insependo}, which computes inseparable endomorphisms of $E$. Next, we use Algorithm~\ref{alg:insependo} in Algorithm~\ref{alg:bass} to compute a Bass suborder of $\End(E)$. In Section~\ref{sec:provable}, we introduce Algorithm~\ref{alg:endo_ring}, which provably computes $\End(E)$ and, conditional on GRH but no further heuristics, terminates in $\widetilde{O}(p^{1/2})$ time. Algorithm~\ref{alg:endo_ring} calls  Algorithm~\ref{alg:bass}, along with  algorithms of~\cite{EHLMP,EHLMP20,Wes22}, to compute a basis for $\End(E)$. Finally, in Section \ref{sec:inseparablegens}, we propose a heuristic algorithm to compute $\End(E)$ in which we first find enough inseparable reflections to generate $\ZZ+P$ and then use linear algebra to compute a basis for $\End(E)$ from a basis for $\ZZ+P$. In  Appendix~\ref{sec:appendix}, we discuss some of the algorithmic aspects of this approach. %appendix? 

\subsection*{Acknowledgements}

We thank Heidi Goodson, Christelle Vincent, and Mckenzie West for organizing the first edition of  \emph{Rethinking Number Theory} in 2020, where this project began, and  the American Institute of Mathematics for their additional support. We also thank John Voight for several helpful discussions. We thank the referee for several useful comments and suggestions that improved the presentation of the results.

The second author was partially supported by the European Union - FSE-REACT-EU, PON Research and Innovation 2014-2020 DM1062/2021 contract number 18-I-15358-2, by the Italian National Group for Algebraic and Geometric Structures and their Applications (GNSAGA - INdAM), by Projet ANR collaboratif Barracuda ANR-21-CE39-0009-BARRACUDA, and by the ANR Grant ANR-20-CE40-0013-MELODIA. The fourth author was partially supported by the National Science Foundation grant CNS-2340564 and the Commonwealth Cyber Initiative. The fifth author was partially supported by MOST Grant 110-2811-M-007-517.

 \section{Notation and background}\label{sec:background}
 In this section we fix our notation and recall some  definitions and facts about elliptic curves and quaternion algebras. We refer the reader to Silverman \cite[Chapters III and V]{AEC} and Voight \cite{Voight} for details. 

\subsection{Elliptic curves}
Let $q$ be a positive power of a prime $p>3$, and let $E$ be an elliptic curve defined over the finite field $\FF_q$. Since isomorphic elliptic curves have isomorphic endomorphism rings, we may always assume that $E$ is defined  by a short Weierstrass affine form $E:y^2=x^3+ax+b$, with $a,b\in \mathbb F_{q}$,  such that $4a^3+27b^2\neq 0$. An {\em isogeny} $\phi\colon E\to E'$ between two elliptic curves is a non-constant rational map inducing a group homomorphism $E(\overline{\FF_q})\to E'(\overline{\FF_q})$. An {\em endomorphism} of $E$ is either an isogeny $E\to E$ or the zero-map on $E$.   We define the elliptic curve $E^{(p)}: y^2=x^3+a^px+b^p$, and we denote by $\pi$  the $p$-power Frobenius isogeny $\pi\colon E \to E^{(p)}$ defined by $\pi(x,y)=(x^p,y^p)$. We use the same notation $\pi$ for every such Frobenius isogeny, independent of the choice of the starting elliptic curve. We let $\pi_E$ denote the Frobenius endomorphism which sends $(x,y)\mapsto (x^q,y^q)$. For an integer $n$, we  denote by $E[n]$ the {\em $n$-torsion subgroup} of $E$, consisting of points of $E$ of order dividing $n$. The elliptic curve $E$ is {\em supersingular} if and only if $E[p]=\{0\}$.

For elliptic curves $E,E'$ defined over $\FF_q$, we use the notation $\Hom(E,E')$ for the set of isogenies from $E$ to $E'$ defined over $\FF_q$ together with the zero map. If $L/\FF_{q}$ is an algebraic extension, we let $E_L$ denote the base change of $E$ from $\FF_q$ to $L$ and let $\Hom_L(E,E')\coloneqq \Hom(E_L,E_L')$. Finally we call $\End(E)\coloneqq \Hom_{\overline{\FF_q}}(E,E)$ the  (geometric) \emph{endomorphism ring of $E$} and $\End^0(E)\coloneqq \End(E)\otimes_{\ZZ} \QQ$ the (geometric) \emph{endomorphism algebra of $E$}. When $E$ is a supersingular elliptic curve defined over $\FF_q$, $E$ has a model defined over $\FF_{p^2}$ since its $j$-invariant is in $\FF_{p^2}$. Moreover, we can choose a model of $E$ so that all of its isogenies are defined over $\FF_{p^2}$ as well: indeed we can choose a model so that the trace of $\pi_E$ of $E$ is $2p$, in which case $\pi_E=[p]$, the multiplication-by-$p$ map. If $\psi\colon E\to E'$ is an isogeny between any two such models of  elliptic curves $E$ and $E'$, then $\psi\pi_{E}=\pi_{E'}\psi$ and so $\psi$ is defined over $\FF_{p^2}$ as desired.

In this paper, we focus on supersingular elliptic curves over $\FF_{p^2}$, although some of the results are stated for  elliptic curves over $\mathbb F_q$.  If $E/\mathbb F_{p^2}$ is a supersingular elliptic curve, then $\End^0(E)$ is isomorphic to  the definite quaternion algebra $B_{p,\infty}$ over $\mathbb Q$ ramified exactly at $p$ and $\infty$, and $\End(E)$ is  a maximal order in $\End^0(E)$. 
Computing $\End(E)$ entails finding a basis of a maximal order $\mathcal O$ in $B_{p,\infty}$ such that $\End(E)\simeq  \mathcal O$.

\subsubsection{Isogeny graphs}

We now define supersingular isogeny graphs; see~\cite{Mes86},~\cite[Chapter 7]{Koh96}, and~\cite[Section 3]{secuer} for additional details. Let $p>3$ and $\ell$ be distinct primes. The {\em supersingular $\ell$-isogeny graph in characteristic $p$}, denoted by $G(p,\ell)$, is a directed multigraph, consisting of supersingular elliptic curves over $\overline{\FF_p}$ and their $\ell$-isogenies. 
More precisely, the vertex set of $G(p,\ell)$ is $V=V(p)$, a complete set of 
representatives of isomorphism classes of supersingular elliptic curves over $\FF_{p^2}$. For $E,E'\in V$, the arrows in $G(p,\ell)$ from $E$ to $E'$ are a complete set of representatives of equivalence classes of $\ell$-isogenies $E\to E'$, where two $\ell$-isogenies $\phi,\psi\colon E\to E'$ are equivalent if $\phi=u\psi$ for some automorphism $u\in \Aut(E)$.   This graph is finite, with approximately $(p-1)/12$ many vertices: indeed, by~\cite[V.4.1(c)]{AEC}, the number of vertices in $G(p,\ell)$ is 
\[
\#V(p) = \floor*{\frac{p-1}{12}} + \begin{cases} 0 &: p\equiv 1 \pmod{12} \\ 1 &: p\equiv 5,7\pmod{12} \\ 2 &: p\equiv 11\pmod{12} \end{cases}
\]
 and the  out-degree at each vertex is constant, equal to $\ell+1$. 

Consider the $\CC$-vector space $H$ with basis $V$. Let $a_{E\to E'}$ denote the number of cyclic subgroups $C\leq E[\ell]$ of order $\ell$ such that $E/C\simeq E'$. Define the adjacency operator $A=A(\ell,p)$ on $H$ by $AE = \sum_{E'} a_{E\to E'}E'$. Define $w_E\coloneqq \#\Aut(E)/2$. The vector space $H$ is equipped with an inner product defined by $\langle E,E'\rangle = w_E$ if $E=E'$ and $0$ otherwise. Define $\mathcal{E} = \sum_{E\in V} w_E^{-1} E$. Then 
\[
\langle \mathcal{E}, \mathcal{E}\rangle = \sum_{E\in V} w_E^{-1} = \frac{p-1}{12}
\]
and $\mathcal{E}$ is an eigenvector for $A$ with eigenvalue $\ell+1$. The adjacency operator $A$ is self-adjoint as an operator on $H$ with respect to $\langle\cdot,\cdot \rangle$. Thus $A$ has all real eigenvalues. 
Moreover, $G(p,\ell)$ is a (directed) {\em Ramanujan graph}: the magnitude of the second largest eigenvalue of $A$ is bounded by $2\sqrt{\ell}$. 

This implies that the  random walk in $G(p,\ell)$ mixes rapidly, a fact that we exploit in our algorithms for computing endomorphisms of supersingular elliptic curves. More precisely, a {\em probability distribution} on $G(p,\ell)$ is a vector $v=\sum_{E\in V} v_E E\in H$ such that $\sum_{E\in V} v_E=1$ and $v_E\geq 0$ for all $E\in V$. The random walk on $G(p,\ell)$ is the Markov chain defined by the transition matrix $P\coloneqq \frac{1}{\ell+1}A$. Then $s=\mathcal{E}/\langle \mathcal{E}, \mathcal{E}\rangle$ is the stationary distribution for the random walk. Let $v$ be any probability distribution on $G(p,\ell)$ and $v^{(t)}\coloneqq  P^tv$, the probability distribution obtained by sampling  according to $v$ and then taking $t$ many random steps in $G(p,\ell)$. The Ramanujan property guarantees that as $t\to \infty$, the sequence of distributions $v^{(t)}$ rapidly converges to $s$: for example, we have that the total variation distance between $v^{(t)}$ and $s$ is $O(p^{-1/2})$ if $t=\Omega(\log p)$, where the implied constants depend on $\ell$ but not on $p$ or $t$. 

We can compute random walks in $G(p,\ell)$ using the $\ell$th classical modular polynomial $\Phi_{\ell}(X,Y)\in \ZZ[X,Y]$, which over a field $k$ with $\ch(k)\not=\ell$ parameterizes  $\overline{k}$-isomorphism classes of elliptic curves connected by an $\ell$-isogeny with cyclic kernel. Given a supersingular elliptic curve, by selecting a random root $j$ of $\Phi_{\ell}(j(E),Y)$ (weighted according to its multiplicity as a root), we can effectively take a random step from $E$ to one of its neighbors in $G(p,\ell)$.
\subsection{Quaternion algebras}\label{background_quaternion_algebras}
Let $F$ be a field. A quaternion algebra $B$ over $F$ is a central simple $F$-algebra of dimension
$4$. Let $a,b\in F^\times$, and let $H(a,b) \coloneqq F \oplus F i\oplus Fj
\oplus F ij$ be the $F$-algebra with $F$-basis $\{1,i,j,ij\}$ subject to the multiplication rules
$i^2=a$, $j^2=b$, and $ij=-ji$. Then, $H(a,b)$ is a quaternion algebra. Moreover, assuming that the characteristic of $F$ is not 2, for any
quaternion algebra $B$ over $F$, there exist $a,b\in F$ such that $B$ is isomorphic to  $H(a,b)$.

\subsubsection{The canonical involution}
Let $B=H(a,b)$ be a quaternion algebra over $F$ with basis $\{1,i,j,ij\}$.
The {\em standard involution} of $B$ is the $F$-linear map $\,\bar{}\colon B\to B$ such that if $\alpha=w+xi+yj+zij\in B$, then $\bar{\alpha}=w-xi-yj-zij$. Note that it satisfies $\bar{1} = 1$, $\bar{\bar{\alpha}} = \alpha$, and $\overline{\alpha\beta} = \bar{\beta}\bar{\alpha}$ for every $\alpha, \beta\in B$.
We define the {\em reduced trace} of $\alpha\in B$ to be $\Trd\alpha\coloneqq \alpha+\bar{\alpha}$ and  the {\em reduced norm of $\alpha$} to be $\Nrd\alpha\coloneqq\alpha\bar{\alpha}$. Both $\Trd\alpha$ and $\Nrd\alpha$ are in $F$ for any $\alpha \in B$. Note that $\alpha$ and $\bar{\alpha}$ are roots of their characteristic polynomial $x^2-(\Trd\alpha)x+\Nrd\alpha$.

The reduced trace defines a pairing 
$\langle\cdot,\cdot\rangle\colon B\times B\rightarrow F$ 
defined by  $(\alpha,\beta) \mapsto \Trd(\alpha\bar{\beta})$.
The corresponding quadratic form $Q:B\rightarrow F$ is defined by $Q(\alpha)=\Nrd(\alpha),$ for $\alpha \in B$. Now, let $\mathcal B=\{e_1,e_2,e_3,e_4\}$ be a basis of $B$. We define the \emph{Gram matrix of $Q$} with respect to the basis $\mathcal B$ as the matrix
$$G=\left(\langle e_i, e_j\rangle\right)_{1\leq i,j\leq 4}=\left(\Trd(e_i\bar{ e_j})\right)_{1\leq i,j\leq 4}.$$
Then, for $\alpha=x_1e_1+x_2e_2+x_3e_3+x_4e_4$ and $\beta=y_1e_1+y_2e_2+y_3e_3+y_4e_4$, with $x_i,y_i\in F$, we have
\[
\langle \alpha,\beta\rangle=\Trd(\alpha\bar{\beta})=xGy^t,
\]
where $x=(x_1,x_2,x_3,x_4)$ and $y=(y_1,y_2,y_3,y_4)$.
\subsubsection{Completions, splitting, and ramification}

Let $\QQ_v$ denote the completion at a place $v$ of $\QQ$. Here, $\QQ_v=\QQ_p$ for some prime $p$ if $v$ is a finite place, and $\QQ_v=\RR$ if $v$ is the infinite place. 
If $B$ is a quaternion algebra over $\QQ$, then
$B\otimes \QQ_v$ is a quaternion algebra over $\QQ_v$. A quaternion algebra over $\QQ_v$ is
either the unique division algebra of dimension 4 over $\QQ_v$ or is isomorphic to $M_2(\QQ_v)$. If
$B\otimes \QQ_v \simeq M_2(\QQ_v)$, we say that $B$ is {\em split
at $v$}. If $B\otimes \QQ_v$ is a division algebra, we say that $B$ is {\em ramified at $v$}. The set of places of $\QQ$ where $B$ is ramified is a finite set of even cardinality. If $B$ is not ramified at any place, then $B\simeq M_2(\QQ)$. The \emph{discriminant $\disc(B)$ of $B$} is the product of all primes $p$ at which $B$ is ramified.
\subsubsection{Quaternionic ideals and orders}
Let $B$ be a quaternion algebra over $\QQ$. A $\ZZ$-{\em lattice} $I$ in $B$ is a finitely generated $\ZZ$-submodule of $B$ such that $\QQ I = B$. A {\em $\ZZ$-order} $\OO\subseteq B$ is a $\ZZ$-lattice in $B$ which is also a subring. 
Analogously, one defines a $\ZZ_p$-order in the quaternion algebra  $B\otimes \QQ_p$. Given a lattice $I$ in $B$, the {\em left order} of $I$ is $\OO_L(I)\coloneqq \{\alpha\in B: \alpha I\subseteq I\}$, and we similarly define its right order $\OO_R(I)\coloneqq \{\alpha \in B: I\alpha \subseteq I\}$. A lattice $I\subseteq B$ is a {\em left} (resp. {\em right}) {\em fractional $\OO$-ideal} if $\OO\subseteq \OO_L(I)$  (resp. $\OO\subseteq \OO_R(I)$), and a  fractional left $\OO$-ideal $I$ is an {\em integral left $\OO$-ideal} (or simply a left ideal  of $\OO$) if $I\subseteq \OO$.  If $I$ is both a left and right $\OO$-ideal, we say that $I$ is a {\em two-sided} ideal of $\OO$. For a left  (or right) $\OO$-ideal $I$, define the {\em reduced norm}  of $I$ to be $\Nrd(I)\coloneqq \gcd(\{\Nrd(\alpha) : \alpha \in I\})$. 

An order $\OO \subseteq B$
is {\em maximal} if it is not properly contained in any other order. There
can exist distinct maximal orders in $B$ which can even be non-isomorphic.

The situation is a little simpler for $B\otimes \QQ_p$. Indeed, if $B$ is split at $p$, 
there are infinitely many maximal orders in $B\otimes \QQ_p$, but they are all conjugate to $M_2(\ZZ_p)$. 
If
$B\otimes \QQ_p$ is a division algebra, then 
one can extend the valuation on $\QQ_p$ to $B\otimes \QQ_p$, and the unique maximal order
is the valuation ring.
A $\ZZ$-order $\OO\subseteq B$ is maximal if and only if $\OO\otimes \ZZ_p$ is a maximal $\ZZ_p$-order in $B\otimes \QQ_p$ for every prime $p$~\cite[Lemma 10.4.3]{Voight}. Thus, maximality of an order in $B$ is a local property. 

We can define the notion of discriminant also for an order $\OO \subseteq  B$. Let $\alpha_1, \alpha_2,\alpha_3,\alpha_4$ be a $\ZZ$-basis of $\OO$, then the {\em discriminant $\disc(\OO)$} is defined as %
\[
\disc(\OO):= \det(\langle \alpha_i,\alpha_j\rangle)_{1\leq i,j\leq 4}=\det(\Trd(\alpha_i\bar{\alpha_j})))_{1\leq i,j\leq 4}\in \ZZ.
\]
It is possible to show that 
$\disc(\OO)$ is always a square, so we define the \emph{reduced discriminant} $\discrd(\OO)$ of $\OO$ to be the positive integer satisfying $\discrd(\OO)^2 =\disc(\OO)$. 
A $\ZZ$-order $\mathcal O$ is maximal in $B$ if and only if $\discrd(\OO)=\disc(B)$ \cite[Theorem 15.5.5]{Voight}. Moreover, if $\OO\subseteq \OO'$,  then $\discrd(\OO)=[\OO'\colon \OO]\discrd(\OO')$, where $[\OO'\colon \OO]$ denotes the index of $\OO$ in $\OO'$ as abelian groups \cite[Lemma 15.2.15]{Voight}.

We recall some of the properties of orders in a quaternion algebra $B$ over $\QQ$.  We say that a $\ZZ$-order $\OO \subseteq B$ is \emph{Gorenstein} if 
every left ideal $I$ of $\OO$ satisfying $\OO_L(I)=\OO$ is invertible. The order $\OO$ is \emph{Bass} if every overorder $\OO'\supseteq \OO$ is Gorenstein. An order $\OO$ is Bass if and only if it is {\em basic}, meaning that $\OO$ contains a maximal order in a commutative subalgebra of $B$, and being basic is a local property \cite[Proposition 24.5.10]{Voight}: this fact was originally proved by Eichler~\cite[Satz 8]{Eic36} for quaternion algebras over $\QQ$, and generalized in~\cite{CSV}. This allows us to prove that an order is Bass by producing, for each prime $\ell$, an imaginary quadratic order $R$ in $\OO$ whose conductor is coprime to $\ell$.

\subsection{Computing in finite fields and quaternion algebras}

\subsubsection{Algebraic operations over \texorpdfstring{$\FF_{p^2}$}{Fp2}}\label{subsec:complexities}
We will state the complexity of our algorithms in terms of bit operations. Let $\llog x$ denote $\log\log x$.  Because supersingular elliptic curves and their isogenies may all be defined over $\FF_{p^2}$, we  record here the bit complexity of various algebraic operations over $\FF_{p^2}$. Let $\M(n)$ denote the bit-complexity of multiplying two $n$-bit integers. Then $\M(n)=O(n\log n)$~\cite{HarveyH21}. Let $a,b\in \FF_{p^2}$. We can compute the sum $a+b$, the product $ab$, and (when $a\not=0$) the inverse $a^{-1}$ in $O(\log p)$, $O(\M(\log p))=O(\log p(\llog p))$, and $O(\M(\log p)\llog p) = O(\log p(\llog p)^2)$  bit operations respectively, see~\cite[Corollary 9.9, Theorem 8.27, Corollary 11.11]{vzGG}. For a polynomial $f\in \FF_{p^2}[x]$ we can compute  the irreducible factors of $f$ in $\FF_{p^2}[x]$ and their multiplicities in expected $O(d\M(d)\log(pd)\M(\log p)) = O(d^2(\log d)(\log pd)(\log p)(\llog p))$ bit operations~\cite[Theorem 14.14]{vzGG}. 

\subsubsection{Computing in quaternion algebras}
We will often require algorithms to take an order in a quaternion algebra as an input, or provide one as an output. We represent a quaternion algebra $H(a,b)$ by the rational numbers $a,b$. The {\em size} of a rational number $m/n$ with $\gcd(m,n)=1$ is the number of bits required to specify the integers $m$ and $n$ and therefore $\size(m/n)=O(\max\{\log_2(m),\log_2(n)\})$.   We represent elements of $H(a,b)$ as $\QQ$-linear combinations of the symbols $1,i,j,ij$ and use the multiplication rules $i^2=a,j^2=b$, $ij=-ji$. The {\em size} of $H(a,b)$ is the number of bits required to represent the multiplication table for the basis $1,i,j,ij$, so $\size(H(a,b))=O(\max\{\size(a),\size(b)\})$. Given a vector $v\in \QQ^4$, define $\size(v)$ be to be the sum of the sizes of its coefficients. We represent an order in $\OO$ by  four vectors $v_1,v_2,v_3,v_4\in \QQ^4$ which are the coefficient vectors of a basis of $\OO$ in terms of the basis $1,i,j,ij$.
The {\em size} of a $\ZZ$-basis $\{v_1,v_2,v_3,v_4\}$ for $\OO$ is the size of $H(a,b)$  plus $\sum\size(v_i)$. We will often abuse notation and write $\OO$ as the input or output to an algorithm; by this we mean a basis of $\OO$ is the input or output. In this context we will also write $\size(\OO)$ for the size of the input or output basis. Various other integer quantities capturing the size of $\OO$, such as its (reduced) discriminant, have size polynomial in the size of a suitable basis of $\OO$.

\section{Inseparable endomorphisms}\label{sec:inseparables}

Let $E$ be a supersingular elliptic curve defined over $\mathbb F_{p^2}$ and let $\alpha\in \End(E)$. We say that $\alpha$ is \emph{inseparable} if $\alpha=\pi\circ \phi$, where $\phi\in \Hom(E,E^{(p)})$. The set of inseparable endomorphisms $P:=\pi\Hom(E,E^{(p)})$ is a 2-sided ideal  of $\End(E)$ and we refer to it as the \emph{ideal of inseparable endomorphisms of $E$}. 

In this section, we first study the arithmetic properties of $\mathbb Z+P\subseteq \End(E)$. Then in Subsection 3.2, we focus our attention on a particular kind of inseparable endomorphisms that we call \emph{inseparable reflections}.

\subsection{Properties of \texorpdfstring{$\mathbb Z+P$}{Z+P}}

    For completeness, we present the results of this subsection in the more general setting where  $B$ is a quaternion algebra over $\QQ$ ramified at a prime $p$.

\begin{prop}\label{prop:z+p}
Let $B$ be a quaternion algebra over $\QQ$ ramified at a prime $p$. Let $\OO$ be a maximal order in $B$ and let $P$ be the $2$-sided ideal in $\OO$ of reduced norm $p$. Then $\ZZ+P$ is a suborder of $\OO$ of index $p$, and $\OO$ is the unique maximal order of $B$ containing $\ZZ+P$.
\end{prop}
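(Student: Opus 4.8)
The plan is to work locally at each prime using the fact, quoted earlier in the excerpt, that maximality of an order is a local property and that an order is maximal iff its reduced discriminant equals $\disc(B)$. First I would record what $P$ looks like locally. At a prime $\ell \neq p$, the algebra $B \otimes \QQ_\ell$ splits, $\OO \otimes \ZZ_\ell \cong M_2(\ZZ_\ell)$, and the two-sided ideal of reduced norm $p$ becomes the unit ideal: $P \otimes \ZZ_\ell = \OO \otimes \ZZ_\ell$. At the prime $p$, $B \otimes \QQ_p$ is the division algebra, $\OO \otimes \ZZ_p$ is its valuation ring, and $P \otimes \ZZ_p = \mathfrak{m}$ is the unique maximal (two-sided) ideal, the valuation ideal, which satisfies $\mathfrak{m}^2 = p(\OO \otimes \ZZ_p)$ and has residue field $\FF_{p^2}$, so $(\OO \otimes \ZZ_p)/\mathfrak{m} \cong \FF_{p^2}$.

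**Index computation.** To see that $\ZZ + P$ has index $p$ in $\OO$: localize. For $\ell \neq p$, $(\ZZ + P) \otimes \ZZ_\ell = \ZZ_\ell + \OO\otimes\ZZ_\ell = \OO \otimes \ZZ_\ell$, so there is no contribution to the index. At $p$, $(\ZZ + P)\otimes \ZZ_p = \ZZ_p + \mathfrak{m}$, and the index $[\OO\otimes\ZZ_p : \ZZ_p + \mathfrak{m}]$ equals the index of the image of $\ZZ_p$ in the residue field $\FF_{p^2}$, namely $[\FF_{p^2} : \FF_p] $ as an index of additive groups is $p$. Hence $[\OO : \ZZ + P] = p$, and in particular $\ZZ + P$ is a full $\ZZ$-lattice; it is visibly a subring since $P$ is a two-sided ideal of $\OO$ (so $P \cdot P \subseteq P$ and $\ZZ \cdot P \subseteq P$). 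Equivalently one could cite $\discrd(\ZZ+P) = [\OO : \ZZ+P]\cdot \discrd(\OO) = p \cdot p = p^2$.

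**Uniqueness of the maximal overorder.** Suppose $\OO'$ is a maximal order with $\ZZ + P \subseteq \OO'$. I want $\OO' = \OO$. Again this is local: it suffices to show $\OO \otimes \ZZ_\ell = \OO' \otimes \ZZ_\ell$ for every $\ell$. For $\ell \neq p$ there is nothing to do for the \emph{lower} bound side, but I actually need to pin down $\OO'\otimes\ZZ_\ell$ from above; here I use that $\OO'$ is maximal, so $\OO'\otimes\ZZ_\ell$ is \emph{a} maximal order of $M_2(\QQ_\ell)$ containing $\ZZ_\ell + (\OO'\otimes\ZZ_\ell \cap \ldots)$ — this direction needs a little care, so let me instead run the uniqueness argument only at $p$ and handle $\ell \neq p$ by a cleaner observation. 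The cleanest route: at $\ell \neq p$, $\discrd(\OO')$ is a power of $p$ (it equals $\disc(B) = p$ since $\OO'$ is maximal), so $\OO' \otimes \ZZ_\ell$ is maximal in the split algebra; but any maximal order is unramified-locally determined only up to conjugacy, so I cannot conclude equality from maximality alone at $\ell\neq p$. This is the genuine subtlety, and the fix is that the \emph{interesting} constraint is at $p$: I claim $\ZZ + P$ already determines $\OO$ at $p$. Indeed $(\ZZ+P)\otimes\ZZ_p = \ZZ_p + \mathfrak{m}$, and in the local division algebra $D = B\otimes\QQ_p$ with valuation ring $\OO_D$, the only order containing $\ZZ_p + \mathfrak{m}$ is $\OO_D$ itself: any order $\Lambda \supseteq \ZZ_p + \mathfrak{m}$ has $\Lambda \subseteq \OO_D$ (as $\OO_D$ is the unique maximal order of $D$), and $\Lambda \supseteq \mathfrak{m}$ with $\OO_D/\mathfrak{m} \cong \FF_{p^2}$ a field means $\Lambda/\mathfrak{m}$ is a subring of $\FF_{p^2}$ containing $\FF_p$ (the image of $\ZZ_p$); the only such subrings are $\FF_p$ and $\FF_{p^2}$, giving $\Lambda = \ZZ_p + \mathfrak{m}$ or $\Lambda = \OO_D$. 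Then for the global statement: $\OO'$ maximal forces $\OO'\otimes\ZZ_\ell$ maximal everywhere, $\OO'\otimes\ZZ_p \supseteq \ZZ_p + \mathfrak{m}$ forces (since $\ZZ_p+\mathfrak{m}$ is not maximal) $\OO'\otimes\ZZ_p = \OO_D = \OO\otimes\ZZ_p$; and since $\ZZ+P \subseteq \OO \cap \OO'$ with both maximal, the agreement at $p$ plus maximality at all $\ell$ is not yet equality away from $p$ — so the final ingredient is: $\OO = \OO_L(P)$ (the left order of $P$), hence any order containing $P$ is contained in $\OO_L(P) = \OO$; therefore $\OO' \subseteq \OO$, and maximality of $\OO'$ forces $\OO' = \OO$.

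**Where the difficulty lies.** The main obstacle is exactly the away-from-$p$ behavior in the uniqueness claim: naively one only gets that $\OO'$ is \emph{some} maximal order, and maximal orders in $B$ need not be unique or even isomorphic. The resolution I would emphasize in the writeup is the identity $\OO_L(P) = \OO$ (equivalently $\OO_R(P) = \OO$), which holds because $P$ is a two-sided $\OO$-ideal with $\OO$ maximal, so its left order is a maximal order containing $\OO$, hence equals $\OO$. This single clean fact reduces the whole uniqueness statement to "any order containing $P$ lies in $\OO_L(P) = \OO$, and a maximal order contained in $\OO$ equals $\OO$," sidestepping the local conjugacy issue entirely. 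I would structure the final proof as: (1) local description of $P$; (2) the index/subring claim via localization at $p$; (3) $\OO_L(P) = \OO$; (4) conclude uniqueness. Step (3) is the conceptual crux; everything else is bookkeeping with the local-global dictionary already recalled in Section~\ref{sec:background}.
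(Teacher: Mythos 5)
Your local description of $P$ and the index computation are correct and consistent with the paper (which gets $[\OO:\ZZ+P]=p$ from $[\OO:P]=\Nrd(P)^2=p^2$ and $(\ZZ+P)/P\cong\ZZ/p\ZZ$). The gap is in your final uniqueness step. The assertion ``any order containing $P$ is contained in $\OO_L(P)$'' does not follow from $\OO_L(P)=\OO$ and is false as a general principle: an order $\Lambda\supseteq P$ satisfies $\Lambda P\subseteq\Lambda$, whereas membership in $\OO_L(P)$ requires the strictly stronger $\Lambda P\subseteq P$. Concretely, in $M_2(\QQ)$ the two-sided ideal $\ell M_2(\ZZ)$ of $M_2(\ZZ)$ has left order $M_2(\ZZ)$, yet it is contained in the distinct maximal order $\left(\begin{smallmatrix}\ZZ&\ell^{-1}\ZZ\\ \ell\ZZ&\ZZ\end{smallmatrix}\right)$. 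So the identity $\OO_L(P)=\OO$ does not ``sidestep the local conjugacy issue''; what actually saves the statement is that $\Nrd(P)=p$ is supported only at a ramified prime, which is a local fact.

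The repair is already in your own setup, at the point where you talked yourself out of the local argument. For $\ell\neq p$ you showed $(\ZZ+P)\otimes\ZZ_\ell=\OO\otimes\ZZ_\ell$; this is a \emph{maximal} $\ZZ_\ell$-order, and any order containing a maximal order equals it, so $\OO'\otimes\ZZ_\ell=\OO\otimes\ZZ_\ell$ --- you do not need to ``pin down $\OO'$ from above,'' the lower bound alone forces equality. At $p$, $B\otimes\QQ_p$ is division, so $\OO'\otimes\ZZ_p$ lies in the unique maximal order $\OO\otimes\ZZ_p$, and maximality of $\OO'$ gives equality there; the local-global dictionary then yields $\OO'=\OO$. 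This is exactly the paper's argument, phrased there via $v_\ell(\discrd(\ZZ+P))=v_\ell(\disc B)$ for $\ell\neq p$. One further small imprecision: the proposition assumes only that $B$ is ramified at $p$, not that $\disc(B)=p$, so your identifications $\discrd(\OO')=p$ and $B\otimes\QQ_\ell\cong M_2(\QQ_\ell)$ for all $\ell\neq p$ need not hold; the argument survives because $P\otimes\ZZ_\ell=\OO\otimes\ZZ_\ell$ whenever $\ell\nmid\Nrd(P)=p$, whether or not $B$ splits at $\ell$.
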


\begin{proof}
 We begin by showing that $\ZZ+P$ is an order. First, it is a lattice since it is finitely generated and $B=P\QQ \subseteq (\ZZ+P)\QQ$. 
 Second, since $P$ is an ideal, $\ZZ+P$ is closed under multiplication and contains $1\in B$ so $\ZZ+P$ is a subring of $B$. Therefore $\ZZ+P$ is a suborder of $\OO$. 
 
 We now calculate the index of $\ZZ+P$ in $\OO$.  Let $D= \disc(B)$. Since $P$ is invertible (as it is an integral ideal of a maximal order, see~\cite[Proposition 16.1.2]{Voight}), by~\cite[Proposition 16.7.7(iv)]{Voight}, we conclude $[\OO:P]=\Nrd(P)^2 = p^2$. 
Since $\ZZ\cap P\cong p\ZZ$ by~\cite[18.2.7(b)]{Voight}, as $\ZZ$-modules we have  $(\ZZ+P)/P\cong \ZZ/(\ZZ\cap P)\cong\ZZ/p\ZZ$. Therefore, $[\ZZ+P:P]=p$. By multiplicativity of the index, we have $[\OO:\ZZ+P]=p$,  so \cite[Lemma 15.2.15]{Voight} implies
\[
\disc(\ZZ+P)=[\OO:\ZZ+P]^2\disc(\OO)=p^2D^2=(pD)^2.
\]

Now we show that $\OO$ is the only maximal order containing $\ZZ+P$.  First, an order $\Lambda$ in $B$ is maximal at a prime $\ell\neq p$ if and only if $v_{\ell}(\discrd(\Lambda)) = v_{\ell}(D)$\cite[Lemma 15.5.3, Example 15.5.4]{Voight}.
Since the reduced discriminant of $\ZZ+P$ is $pD$,  
we have
$v_{\ell}(\discrd(\ZZ+P))=v_{\ell}(p)+v_{\ell}(D)=v_{\ell}(D)$, so the order $\ZZ+P$ is maximal at any prime $\ell\not=p$. This implies $\OO\otimes \ZZ_{\ell}=(\ZZ+P)\otimes \ZZ_{\ell}$ for any $\ell\not=p$. Moreover, since $B$ is ramified at $p$, by~\cite[Lemmas 10.4.3, 13.3.4]{Voight}, $\OO\otimes\ZZ_p$ is the unique maximal order of $B\otimes \QQ_p$ and contains $(\ZZ+P)\otimes \ZZ_p$. 
Therefore, for every prime $\ell$, $\OO\otimes \ZZ_{\ell}$ is the unique maximal $\ZZ_{\ell}$-order containing $(\ZZ+P)\otimes\ZZ_{\ell}$. By~\cite[Corollary 9.4.7, Theorem 9.4.9, Lemma 9.5.3]{Voight}, we conclude that $\OO$ is the unique maximal order containing $\ZZ+P$.
\end{proof}

\begin{rmk}
  The order $\ZZ+P$ is not hereditary \cite[Definition 21.4.1]{Voight}, since its reduced discriminant is divisible by $p^2$ and therefore is not square-free \cite[Lemma 23.3.18]{Voight}. It is not Eichler \cite[Definition 23.4.1]{Voight}, since it fails to be Eichler at $p$ (it is not maximal at $p$, and the only Eichler order in a local division quaternion algebra is the unique maximal order). The order $\ZZ+P$ is Bass, as its reduced discriminant is $pD$ and thus cubefree~\cite[Exercise 24.6.7(a)]{Voight}. However, the order $\ZZ+P$ is residually ramified at $p$ since $(\ZZ+P)/P\cong \ZZ/p\mathbb Z$ (see~\cite[24.3.2]{Voight} for a definition of {\em residually ramified}). Finally, the order $\ZZ+P$ is the {\em order of level $p^2$} in its  unique maximal overorder (see~\cite[Definition 3.5]{Piz80theta}).
\end{rmk}

\begin{rmk}Let $E/\FF_{p^2}$ be a supersingular elliptic curve. To compute a basis of $\End(E)$, one can first compute  a basis of $\ZZ+P\subseteq\End(E)$ and then use Algorithms 7.9 and 3.12 in \cite{V2013} to recover a basis of the unique maximal order $\mathcal O$ containing $\ZZ+P$. In fact Proposition~\ref{prop:z+p} implies $\OO=\End(E)$. We refer the reader to section~\ref{sec:voight-algorithm} of the Appendix for algorithmic aspects of recovering $\End(E)$ from $\ZZ+P$. 
\end{rmk}

\subsection{Inseparable reflections}
We now define, inside the ideal of inseparable endomorphisms of $E$, the \emph{inseparable reflections}. These are inseparable endomorphisms whose construction is based on a symmetry of the supersingular $\ell$-isogeny graph $G(p,\ell)$ given by the Galois involution (see Subsection \ref{subsec:insref} for a formal definition).

\subsubsection{The Galois involution of \texorpdfstring{$G(p,\ell)$}{Gpl}}\label{subsec:Galoisinvolution}

Let $\sigma_p\colon \FF_{p^2}\to \FF_{p^2}$ be the $p$-power Frobenius automorphism such that $\sigma_p(\alpha)=\alpha^p$, for $\alpha\in\FF_{p^2}$. The Galois group $\Gal(\FF_{p^2}/\FF_p)=\langle\sigma_p\rangle$ acts on the set of  elliptic curves defined over $\FF_{p^2}$ sending $E$ to $E^{(p)}$. Note that $(E^{(p)})^{(p)}=E$ and that $E^{(p)}=E$ if and only if $E$ is defined over $\FF_p$.

Similarly we can define an action of $\Gal(\FF_{p^2}/\FF_p)$ on separable isogenies defined over $\FF_{p^2}$. Given a rational function $f\in \FF_{p^2}(x,y)$, let $f^{(p)}$ denote the rational function obtained by raising the coefficients of $f$ to the $p$-th power. Given a separable isogeny $\phi\colon E_1\to E_2$ defined over $\FF_{p^2}$, let us choose representative 
coordinate functions $f,g\in \FF_{p^2}(E_1)$, defined on $E_1- \ker \phi$, so that $\phi(x,y) = (f(x,y), g(x,y))$. Therefore, $\sigma_p$ maps $\phi$ to the isogeny $\phi^{(p)}\colon E_1^{(p)}\to E_2^{(p)}$ such that
$\phi^{(p)}(x,y) = (f^{(p)}(x,y), g^{(p)}(x,y))$. 
It is easy to see that the kernel of $\phi^{(p)}$ is $\pi(\ker\phi)$. Moreover, we have $(\phi^{(p)})^{(p)}=\phi$.

\begin{lem}\label{lem:phip_props}
Let $E_1,E_2,$ and $E_3$ be  elliptic curves 
defined over $\FF_{p^2}$, and let $\phi_1: E_1\to E_2$ and $\phi_2: E_2\to E_3$ be separable isogenies defined over $\FF_{p^2}$. The following  hold.
\begin{enumerate}[(a)]
\item\label{lem:PhipComposition} $(\phi_2\circ\phi_1)^{(p)} = \phi_2^{(p)}\circ\phi_1^{(p)}$. 
\item\label{phip_cd} $\phi_1^{(p)}\circ \pi = \pi\circ \phi_1$.
\item\label{pdualp} $(\widehat{\phi_1^{(p)}})^{(p)}=\widehat{\phi_1}$. Equivalently, $\widehat{\phi_1}^{(p)}=\widehat{\phi_1^{(p)}}$. 
\end{enumerate}
\end{lem}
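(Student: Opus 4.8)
The plan is to prove the three identities in Lemma~\ref{lem:phip_props} by unwinding the definition of $\phi\mapsto\phi^{(p)}$ in terms of coordinate functions and exploiting that raising coefficients to the $p$-th power is a ring homomorphism $\FF_{p^2}(x,y)\to\FF_{p^2}(x,y)$ that fixes the variables $x,y$ (indeed $\sigma_p$ extended coefficient-wise commutes with composition of rational functions because the Frobenius is a field automorphism). For part~\eqref{lem:PhipComposition}, I would write $\phi_1=(f_1,g_1)$ and $\phi_2=(f_2,g_2)$ in coordinate functions and note that the composite $\phi_2\circ\phi_1$ has coordinate functions obtained by substituting $(f_1,g_1)$ into $(f_2,g_2)$; then apply coefficient-wise Frobenius and use that it is a ring homomorphism commuting with substitution to get $(\phi_2\circ\phi_1)^{(p)}=(f_2^{(p)}(f_1^{(p)},g_1^{(p)}),\,g_2^{(p)}(f_1^{(p)},g_1^{(p)}))=\phi_2^{(p)}\circ\phi_1^{(p)}$. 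One should also check the domains/codomains match, which is immediate since $\sigma_p$ sends $E_i$ to $E_i^{(p)}$ compatibly.

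For part~\eqref{phip_cd}, I would again use coordinate functions: if $\phi_1(x,y)=(f(x,y),g(x,y))$ with $f,g\in\FF_{p^2}(E_1)$, then $\pi\circ\phi_1$ sends $(x,y)\mapsto(f(x,y)^p,g(x,y)^p)$, whereas $\phi_1^{(p)}\circ\pi$ sends $(x,y)\mapsto(f^{(p)}(x^p,y^p),g^{(p)}(x^p,y^p))$. The key elementary fact is that for any rational function $h\in\FF_{p^2}(x,y)$ one has $h(x,y)^p=h^{(p)}(x^p,y^p)$: this is because the $p$-power map is additive and multiplicative in characteristic $p$ and acts on coefficients exactly by $c\mapsto c^p$, which is how $h^{(p)}$ is defined. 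Applying this to $f$ and $g$ gives the equality of the two composites. (Alternatively this is the standard statement that Frobenius is a natural transformation from the identity functor to the "twist by $\sigma_p$" functor.)

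For part~\eqref{pdualp}, I would use the characterization of the dual isogeny by the relation $\widehat{\phi_1}\circ\phi_1=[\deg\phi_1]$ on $E_1$, together with parts~\eqref{lem:PhipComposition} and the fact that the multiplication-by-$n$ map is defined over the prime field $\FF_p$, hence fixed by the twisting operation: $[n]^{(p)}=[n]$ on $E^{(p)}$. Applying $(-)^{(p)}$ to $\widehat{\phi_1}\circ\phi_1=[\deg\phi_1]$ and using~\eqref{lem:PhipComposition} yields $\widehat{\phi_1}^{(p)}\circ\phi_1^{(p)}=[\deg\phi_1^{(p)}]$ (noting $\deg\phi_1^{(p)}=\deg\phi_1$ since the twist does not change degree), and similarly $\phi_1^{(p)}\circ\widehat{\phi_1}^{(p)}=[\deg\phi_1]$ on $E_2^{(p)}$. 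Since the dual isogeny of $\phi_1^{(p)}$ is the unique isogeny satisfying these relations, we conclude $\widehat{\phi_1^{(p)}}=\widehat{\phi_1}^{(p)}$; applying $(-)^{(p)}$ once more and using $(\psi^{(p)})^{(p)}=\psi$ gives the equivalent form $(\widehat{\phi_1^{(p)}})^{(p)}=\widehat{\phi_1}$.

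The only mildly delicate point — and the one I would state carefully rather than leave implicit — is the compatibility of the coefficient-wise Frobenius with composition of rational maps of curves, i.e.\ that $(\,\cdot\,)^{(p)}$ is genuinely functorial on the category of elliptic curves over $\FF_{p^2}$ with separable isogenies; once that is granted, part~\eqref{lem:PhipComposition} is immediate and parts~\eqref{phip_cd} and~\eqref{pdualp} follow formally as above, using only that $[n]$ and $\pi$ behave well under twisting. There is no real computational obstacle; the main thing is to organize the bookkeeping so that the identity $h(x,y)^p = h^{(p)}(x^p,y^p)$ is invoked cleanly and the domain/codomain labels are tracked.
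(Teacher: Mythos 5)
Your proposal is correct and follows essentially the same route as the paper: parts (a) and (b) via coordinate functions and the identity $h(x,y)^p=h^{(p)}(x^p,y^p)$, and part (c) by combining (a) with $\deg\phi_1^{(p)}=\deg\phi_1$ and the fact that $([n]_{E})^{(p)}=[n]_{E^{(p)}}$, which the paper justifies via division polynomials (a cleaner justification than ``$[n]$ is defined over the prime field,'' since the coordinate functions of $[n]_E$ involve the coefficients of $E$, which lie in $\FF_{p^2}$). The only organizational difference is that you apply $(\cdot)^{(p)}$ to the relation $\widehat{\phi_1}\circ\phi_1=[\deg\phi_1]$ and invoke uniqueness of the dual, whereas the paper computes $(\widehat{\phi_1^{(p)}})^{(p)}\circ\phi_1=[\deg\phi_1]$ directly; these are the same argument.
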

\begin{proof}
Part~(\ref{lem:PhipComposition}) follows from the calculation that for functions
$f,g,h\in \FF_{p^2}(x,y)$, we have 
\[
(f(g(x,y),h(x,y)))^{(p)} = f^{(p)}(g^{(p)}(x,y),h^{(p)}(x,y)).
\]
Next, we prove~(\ref{phip_cd}). Let us choose representative 
coordinate functions $f,g$ so that $\phi_1(x,y) = (f(x,y), g(x,y))$. Then, $\phi_1^{(p)}(x,y) = (f^{(p)}(x,y), g^{(p)}(x,y))$. This implies 
\begin{align*}
(\phi_1^{(p)}\circ \pi)(x,y) &= \phi_1^{(p)}(x^p,y^p)  \\
&=(f^{(p)}(x^p,y^p),g^{(p)}(x^p,y^p)) \\
&= ((f(x,y))^p, (g(x,y))^p) \\
&= (\pi\circ \phi_1)(x,y). 
\end{align*}

\noindent We now prove~(\ref{pdualp}). We compute 
\begin{align*}
(\widehat{\phi_1^{(p)}})^{(p)} \circ \phi_1 &= (\widehat{\phi_1^{(p)}})^{(p)} \circ (\phi_1^{(p)})^{(p)} =
((\widehat{\phi_1^{(p)}})\circ \phi_1^{(p)})^{(p)}\\ &= ([\deg\phi_1^{(p)}]_{E_1^{(p)}})^{(p)} = ([\deg\phi_1]_{E_1^{(p)}})^{(p)}\\ 
&=[\deg\phi_1]_{E_1},
\end{align*}
where the first equality follows since $\phi_1$ is defined over $\FF_{p^2}$, in the second equality we used part~(\ref{lem:PhipComposition}), and in the fourth one we used $\deg\phi_1 = \deg\phi_1^{(p)}$. The last equality follows from the fact that coordinate functions for the multiplication-by-$m$ map on a curve $E$ are determined by $\psi_{E,m}$, the $m$th division polynomial of $E$~\cite[Exercise 3.7]{AEC}, along with the observation that the recursive definition of $\psi_{E,m}$ implies $\psi^{(p)}_{E,m} = \psi_{E^{(p)},m}$. 
Therefore $\widehat{\phi_1} = (\widehat{\phi_1^{(p)}})^{(p)}.$ 
\end{proof}

Because every $\overline{\mathbb F_p}$-isomorphism class of supersingular elliptic curves contains a model defined over $\mathbb F_{p^2}$ such that all the isogenies are also defined over $\mathbb F_{p^2}$, the Frobenius automorphism $\sigma_p\in\Gal(\FF_{p^2}/\FF_p)$ induces an automorphism of order $2$, i.e. an involution, of $G(p,\ell)$.  In particular, for every $\ell$-isogeny $\phi:E_1\rightarrow E_2$ there is the $\ell$-isogeny $\phi^{(p)}:E_1^{(p)}\rightarrow E_2^{(p)}$. The fixed vertices of this automorphism  correspond to supersingular curves defined over $\FF_p$, and following the terminology of \cite{ACLLNSS}, this action can be visualized as a reflection of $G(p,\ell)$ over the {\em spine} consisting of curves defined over $\mathbb F_p$.
% \anna{add picture}
Going forward, in order to lighten the notation, we write $\psi\phi$, instead of $\psi\circ\phi$, for the composition of two (or more) isogenies.

\subsubsection{Arithmetic properties of inseparable reflections}\label{subsec:insref}
In order to define inseparable reflections we introduce the concept of $(d,\epsilon)$-structures, defined by Chenu and Smith in \cite{CS21} (see also the notion of {\em $d$-admissable} curves in~\cite{morain:hal-01320388}).
\begin{defn}\label{def:depsilon}
 Let $d$ be a positive integer coprime to $p$. A {\em $(d,\epsilon)$-structure} is a pair $(E,\psi)$ where $E$ is an elliptic curve defined over $\FF_{p^2}$ and $\psi\colon E\to E^{(p)}$ is a degree $d$-isogeny satisfying 
 $\psi^{(p)} = \epsilon \widehat{\psi}$ with $\epsilon \in \{\pm 1\}$.  We say that $(E,d)$ is {\em supersingular} if $E$ is supersingular. 

\end{defn}

A $(d,\epsilon)$-structure $(E,\psi)$ yields an endomorphism $\mu=\pi\psi$ of $E$, which Chenu and Smith call its {\em associated endomorphism}.
When $d$ is square-free, a supersingular $(d,\epsilon)$-structure $(E,\psi)$  yields an associated endomorphism $\mu=\pi\psi$ of $E$ such that 
$\ZZ[\mu]\cong \ZZ[\sqrt{-dp}]$, \cite[Proposition 2]{CS21}.  In fact, this holds for arbitrary $d$ coprime to $p$, assuming $p>3$.
\begin{prop}\label{prop:associated_trace}
    Let $d$ be an integer coprime to a prime $p>3$, and let $(E,\psi)$ be a 
    $(d,\epsilon)$-structure. If $\mu=\pi\psi$ is the associated endomorphism of $E$, then $\mu^2=[-dp]$ and $\pi_E=-\epsilon p$.
\end{prop}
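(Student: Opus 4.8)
The plan is to compute directly with $\mu=\pi\psi$, using the defining relation $\psi^{(p)}=\epsilon\widehat{\psi}$ of the $(d,\epsilon)$-structure together with Lemma~\ref{lem:phip_props}. First I would observe that since $\psi\colon E\to E^{(p)}$ is an isogeny, $\mu=\pi\psi$ really is an endomorphism of $E$ (recall $\pi\colon E^{(p)}\to (E^{(p)})^{(p)}=E$ is the $p$-power Frobenius out of $E^{(p)}$). Then I would compute
\[
\mu^2=\pi\psi\pi\psi.
\]
By Lemma~\ref{lem:phip_props}\eqref{phip_cd} applied to $\psi$, we have $\psi^{(p)}\pi=\pi\psi$, hence $\pi\psi\pi\psi=\pi\pi\psi^{(p)}\psi$. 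Now $\pi\pi\colon E\to E$ is precisely the Frobenius endomorphism $\pi_E$ up to the normalization choice, and since we are free to (and do) choose models where $\pi_E=[p]$, this composite is $[p]$ — but to be safe I would phrase it as $\pi\circ\pi$ being the $p^2$-power Frobenius endomorphism, which on a supersingular curve over $\FF_{p^2}$ with our chosen model equals $[p]$ (this is exactly the model normalization recalled in Section~\ref{sec:background}). For $\psi^{(p)}\psi$, I substitute the structure relation $\psi^{(p)}=\epsilon\widehat{\psi}$ to get $\psi^{(p)}\psi=\epsilon\widehat{\psi}\psi=\epsilon[\deg\psi]=\epsilon[d]$. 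Combining, $\mu^2=[p]\cdot\epsilon[d]$... which gives $\epsilon dp$, not $-dp$.

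This sign discrepancy is the point where I expect to have to be careful, and it is the main obstacle: the statement asserts $\mu^2=[-dp]$ unconditionally while also asserting $\pi_E=-\epsilon p$, so the two are consistent ($\mu^2 = \pi_E\psi^{(p)}\psi/[\text{something}]$ bookkeeping must yield $-dp$ regardless of $\epsilon$). The resolution is that $\pi_E$ need not be $[p]$ here — the proposition does not assume the normalized model, only $p>3$ and $\gcd(d,p)=1$. On a supersingular curve, $\pi_E$ acts as $\pm[p]$ (its square is $[p^2]$ and it is a scalar because $\End^0(E)$ is a quaternion algebra in which $\QQ(\pi_E)$ is at most quadratic, but in fact $\pi_E^2=[p^2]$ forces $\pi_E=\pm[p]$ when $\pi_E$ is central; one needs the standard fact that for $E/\FF_{p^2}$ supersingular, $\pi_E\in\ZZ$, i.e. $\pi_E=\pm p$). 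So $\pi\circ\pi=\pi_E=\delta p$ with $\delta\in\{\pm1\}$, giving $\mu^2=[\delta\epsilon dp]$. Since $\mu^2$ must be a negative scalar (as $\mu$ is an endomorphism of the supersingular curve $E$ generating an imaginary quadratic order — or directly, $\Nrd(\mu)=\deg\mu=\deg\pi\deg\psi=pd>0$ and $\Trd(\mu)$ real with $\mu^2-\Trd(\mu)\mu+pd=0$, and $\mu\notin\ZZ$ since it is inseparable of degree $pd$ while a scalar $[n]$ has degree $n^2$, so if $\mu^2=n$ were a scalar then $n<0$), we must have $\delta\epsilon=-1$, i.e. $\delta=-\epsilon$. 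Therefore $\pi_E=\pi\circ\pi=\delta p=-\epsilon p$ and $\mu^2=[-dp]$, which is exactly the claim.

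Concretely, the write-up would proceed in this order: (1) note $\mu\in\End(E)$ and $\deg\mu=pd$; (2) show $\mu^2=\pi\pi\psi^{(p)}\psi$ using Lemma~\ref{lem:phip_props}\eqref{phip_cd}; (3) show $\psi^{(p)}\psi=\epsilon[d]$ using the $(d,\epsilon)$-structure relation and $\widehat{\psi}\psi=[\deg\psi]$; (4) recall $\pi\circ\pi=\pi_E$ and that $\pi_E=\pm p$ for a supersingular curve over $\FF_{p^2}$, writing $\pi_E=\delta p$; (5) conclude $\mu^2=[\delta\epsilon\, dp]$; (6) argue the sign: $\mu$ is not a scalar (degrees: $pd$ is not a perfect square when... — better, $\mu$ is inseparable so it cannot equal any $[n]$ unless we track this, but cleanly, $\mu^2$ being a negative integer follows because $\QQ(\mu)\subseteq\End^0(E)=B_{p,\infty}$ is an imaginary quadratic field or $\QQ$, and if it were $\QQ$ then $\mu=[n]$ with $n^2=pd$ impossible since $\gcd(d,p)=1$ and $p>3$ forces $pd$ non-square unless $d=p$, excluded; alternatively just cite that an inseparable endomorphism has reduced trace... ) hence $\delta\epsilon=-1$; (7) read off $\pi_E=\delta p=-\epsilon p$ and $\mu^2=[-dp]$. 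The only genuinely delicate link is step (6), ruling out $pd$ being a perfect square and thereby pinning the sign — I would handle it by noting $v_p(pd)=1$ is odd, so $pd$ is never a square, hence $\mu\notin\ZZ$, hence $\QQ(\mu)$ is imaginary quadratic and $\mu^2=\delta\epsilon dp$ is forced to be negative.
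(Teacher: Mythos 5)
Your opening computation is exactly the paper's: both proofs reduce $\mu^2=\pi\psi\pi\psi$ to $\pi\pi\psi^{(p)}\psi=\epsilon d\,\pi_E$ via Lemma~\ref{lem:phip_props} and the relation $\psi^{(p)}=\epsilon\widehat{\psi}$. The divergence, and the genuine gap, is your step (4): you assert as a ``standard fact'' that $\pi_E=\pm p$ for a supersingular curve over $\FF_{p^2}$ (equivalently, that $\pi_E^2=[p^2]$, or that $\pi_E\in\ZZ$). This is false in general. By Waterhouse's theorem the trace of Frobenius of a supersingular elliptic curve over $\FF_{p^2}$ lies in $\{0,\pm p,\pm 2p\}$, and the values $0$ and $\pm p$ do occur (for $p\not\equiv 1\pmod 4$ and $p\not\equiv 1\pmod 3$ respectively); for such models $\pi_E\notin\QQ$ and $\pi_E^2=\Trd(\pi_E)\pi_E-[p^2]\neq[p^2]$. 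The identity $\pi_E^2=[p^2]$ holds precisely when $\pi_E=\pm p$, so your justification is circular; indeed $\pi_E=-\epsilon p$ is one of the two conclusions of the proposition --- it is an output of the hypothesis that $E$ carries a $(d,\epsilon)$-structure, not an input. Without $\pi_E\in\QQ$ you cannot conclude that $\mu^2=\epsilon d\,\pi_E$ is a rational scalar, so your sign argument in step (6) (which is otherwise fine: $v_p(dp)=1$ is odd, so $dp$ is not a square, so $\mu\notin\QQ$ and any rational value of $\mu^2$ would have to be negative) never gets off the ground.

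The paper closes this gap without ever assuming $\pi_E$ is scalar. Writing $x^2-ax+dp$ for the characteristic polynomial of $\mu$ and combining it with $\mu^2=\epsilon d\,\pi_E$ gives $a\mu=\epsilon d\,\pi_E+dp$; taking reduced traces yields $a^2=\epsilon d\Trd(\pi_E)+2dp$. Supersingularity gives $p\mid\Trd(\pi_E)$, hence $p^2\mid a^2$; also $d\mid a^2$, and since $\gcd(d,p)=1$ we get $dp^2\mid a^2$. If $a\neq 0$ this forces $dp^2\le a^2<4dp$ (the upper bound because $\ZZ[\mu]$ has negative discriminant), i.e.\ $p<4$, a contradiction. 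So $a=0$, whence $\mu^2=-dp$, and then $0=\epsilon d\Trd(\pi_E)+2dp$ gives $\Trd(\pi_E)=-2\epsilon p$ and hence $\pi_E=-\epsilon p$. To repair your outline, replace step (4) with this trace-and-divisibility argument (or some other derivation of $\Trd(\mu)=0$); as written, the proof assumes a statement equivalent to part of what it is proving.
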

\begin{proof}
    The argument is similar to those in Propositions 1 and 2 of~\cite{CS21}. First, since $(E,\psi)$ is a $(d,\epsilon)$-structure, we have $\psi^{(p)}=\epsilon\widehat{\psi}$. 
    Therefore, 
    \[
    \mu^2=\pi\psi\pi\psi = \pi\pi\psi^{(p)}\psi = \pi_E\epsilon \widehat{\psi}\psi = \epsilon d\pi_E.
    \]
    Let $x^2-ax+dp$ be the characteristic polynomial of $\mu$. We now show $a=0$. Suppose toward a contradiction that $a$ is nonzero. We have $a\mu = \mu^2+dp=\epsilon d\pi_E+dp$. 
    Taking traces, we have 
    \[
    a^2=\Trd(a\mu) = \Trd(\epsilon d\pi_E+dp) = \epsilon d\Trd\pi_E +2dp.
    \]
    We first observe that this implies $d|a^2$. Since $E$ is supersingular, we have $p|\Trd\pi_E$, so we conclude $p|a^2$, and since $p$ is prime, $p^2|a^2$ as well. Since $p$ and $d$ are coprime, $dp^2$ divides $a^2$. Since we assume $a$ is nonzero, we obtain $dp^2\leq a^2$. On the other hand, $\ZZ[\mu]$ must have non-positive discriminant, so $a^2-4dp<0$. Thus
    \[
    dp^2 \leq  a^2 \leq 4dp ,
    \]
    which implies $p<4$. This is our desired contradiction, so we conclude $a=0$ and $\mu^2=-dp$. Finally, we have $0=\epsilon d\Trd\pi_E+2dp$, which implies $\Trd\pi_E = -2\epsilon p$. This implies $\pi_E=-\epsilon p$. 
\end{proof}

We now discuss a construction of a $(d,\epsilon)$-structure for $d$ which is not necessarily square-free. 
\begin{prop}\label{prop:reflection}
    Let $E_1$ be a supersingular elliptic curve. If 
$\phi\colon E_1\to E_2$ is a  $d_1$-isogeny and  $(E_2,\psi)$ is a $(d,\epsilon)$-structure, then 
$(E_1,\widehat{\phi^{(p)}}\psi\phi)$ is a $(d_1^2d,\epsilon)$-structure.
\end{prop}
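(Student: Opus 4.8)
The plan is to verify directly that the pair $(E_1,\chi)$, with $\chi\coloneqq\widehat{\phi^{(p)}}\psi\phi$, satisfies the two requirements of Definition~\ref{def:depsilon}: that $\chi$ is an isogeny $E_1\to E_1^{(p)}$ of degree $d_1^2 d$, and that $\chi^{(p)}=\epsilon\widehat{\chi}$. The first is a bookkeeping check on domains, codomains, and degrees: $\phi\colon E_1\to E_2$ has degree $d_1$; $\psi\colon E_2\to E_2^{(p)}$ has degree $d$; and $\widehat{\phi^{(p)}}\colon E_2^{(p)}\to E_1^{(p)}$ has degree $d_1$ (since $\phi^{(p)}\colon E_1^{(p)}\to E_2^{(p)}$ has the same degree as $\phi$). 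Composing, $\chi$ is indeed an isogeny $E_1\to E_1^{(p)}$ of degree $d_1\cdot d\cdot d_1 = d_1^2 d$. I should also note these are all separable (being coprime to $p$, which holds as $d_1^2 d$ is coprime to $p$ by hypothesis on the $(d,\epsilon)$-structure and on $\phi$), so the Galois action $\psi\mapsto\psi^{(p)}$ of Section~\ref{subsec:Galoisinvolution} applies.

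The substantive step is the identity $\chi^{(p)}=\epsilon\widehat{\chi}$. First compute $\chi^{(p)}$ using Lemma~\ref{lem:phip_props}(\ref{lem:PhipComposition}) (functoriality of the Galois action under composition) and part~(\ref{pdualp}) (which commutes the Galois action past the dual):
\[
\chi^{(p)} = \left(\widehat{\phi^{(p)}}\,\psi\,\phi\right)^{(p)} = \widehat{\phi^{(p)}}^{(p)}\,\psi^{(p)}\,\phi^{(p)} = \widehat{\phi^{(p)(p)}}\,\psi^{(p)}\,\phi^{(p)} = \widehat{\phi}\,\psi^{(p)}\,\phi^{(p)},
\]
using $\phi^{(p)(p)}=\phi$. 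Now apply the $(d,\epsilon)$-hypothesis on $(E_2,\psi)$, namely $\psi^{(p)}=\epsilon\widehat{\psi}$, to get $\chi^{(p)} = \epsilon\,\widehat{\phi}\,\widehat{\psi}\,\phi^{(p)}$. On the other hand, the dual of a composition reverses order and dualizes each factor, and the dual of a dual is the original isogeny, so
\[
\widehat{\chi} = \widehat{\widehat{\phi^{(p)}}\,\psi\,\phi} = \widehat{\phi}\,\widehat{\psi}\,\widehat{\widehat{\phi^{(p)}}} = \widehat{\phi}\,\widehat{\psi}\,\phi^{(p)}.
\]
Comparing the two expressions gives $\chi^{(p)}=\epsilon\widehat{\chi}$ exactly, which is what we need.

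The only place where care is genuinely required — and thus the main obstacle — is keeping the maps between the correct curves so that all compositions are defined and the Galois-action lemma is applied to the right isogenies; in particular one must track that $\widehat{\phi^{(p)}}$ has source $E_2^{(p)}$ and target $E_1^{(p)}$, and that $(\widehat{\phi^{(p)}})^{(p)}=\widehat{\phi}$ is Lemma~\ref{lem:phip_props}(\ref{pdualp}) applied with $\phi_1=\phi$ (equivalently, $\widehat{\phi}^{(p)}=\widehat{\phi^{(p)}}$ and then applying the Galois action once more). Since $p>3$ is assumed in the ambient setting, none of these separable isogenies degenerate, and the computation goes through verbatim. I expect no further subtleties; the degree count and the dual-of-composition identities are standard (see \cite[III.6]{AEC}).
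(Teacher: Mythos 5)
Your proof is correct and follows essentially the same route as the paper: both compute $(\widehat{\phi^{(p)}}\psi\phi)^{(p)}=\widehat{\phi}\,\psi^{(p)}\,\phi^{(p)}$ via Lemma~\ref{lem:phip_props}(\ref{lem:PhipComposition}) and (\ref{pdualp}), substitute $\psi^{(p)}=\epsilon\widehat{\psi}$, and recognize the result as $\epsilon$ times the dual of the composition. Your added bookkeeping on domains, codomains, and degrees is left implicit in the paper but is harmless and correct.
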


\begin{proof}
    We must show $(\widehat{\phi^{(p)}}\psi\phi)^{(p)} = \epsilon\widehat{\widehat{\phi^{(p)}}\psi\phi}$:
    \begin{align*}
        (\widehat{\phi^{(p)}}\psi\phi)^{(p)} &= 
        (\widehat{\phi^{(p)}})^{(p)}\psi^{(p)}\phi^{(p)} 
        & \text{by Lemma \ref{lem:phip_props}, part~(\ref{lem:PhipComposition}}) \\ 
        &= \widehat{\phi}\psi^{(p)} \phi^{(p)} & \text{by Lemma \ref{lem:phip_props} part (\ref{pdualp}})\\
        &= \widehat{\phi}\epsilon \widehat{\psi} \phi^{(p)} & (E,\psi)\text{ is a } (d,\epsilon)-\text{structure}\\
        &= \epsilon \widehat{\widehat{\phi^{(p)}}\psi \phi}. &
    \end{align*}
\end{proof} 
 Below, we define a special type of associated endomorphism to a $(d,\epsilon)$-structure. We call these endomorphisms {\em inseparable reflections} since they arise from paths in isogeny graphs whose image under the Galois involution is the same path, traversed in the opposite direction. 

\begin{defn}\label{def:insepref}
    Let $p$ be a prime, and let $d_1,d$ be coprime integers, with $d$ square-free, which are both coprime to $p$. An {\em inseparable reflection} of degree $d_1^2dp$ of a supersingular elliptic curve $E_1$ defined over $\FF_{p^2}$ is an endomorphism 
    \[
    \alpha = \pi\widehat{\phi^{(p)}}\psi\phi
    \]
    such that $\phi\colon E_1\to E_2$ is a cyclic $d_1$-isogeny,  $(E_2,\psi)$ is a $(d,\epsilon)$-structure, and $\phi$ does not factor nontrivially through an isogeny $\phi'\colon E_1\to E_2'$  such that $E_2'$ has a $(d,\epsilon)$-structure $(E_2',\psi')$. 
\end{defn}

We now study the arithmetic of orders generated by inseparable reflections. First, we determine the imaginary quadratic order generated by a single inseparable reflection, then we study orders generated by two or more inseparable reflections. In particular, we give sufficient conditions for when two inseparable reflections do not commute and hence generate a quaternionic suborder of $\End(E)$. The following proposition follows immediately from Propositions~\ref{prop:associated_trace} and~\ref{prop:reflection}.

\begin{prop}\label{prop:trace_zero_insep}
 Let $E$ be a supersingular elliptic curve defined over $\FF_{p^2}$, and let $\alpha=\pi\widehat{\phi^{(p)}}\psi\phi$ be an inseparable reflection of degree $d_1^2dp$.  Then $\alpha^2=[-d_1^2dp]$ and in particular $\alpha$ has trace zero. 
\end{prop}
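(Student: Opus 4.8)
The statement follows almost immediately by combining the two preceding propositions, so the plan is essentially to chase definitions and apply them in the right order.

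First I would recall what an inseparable reflection of degree $d_1^2dp$ is: by Definition~\ref{def:insepref}, we have $\alpha = \pi\widehat{\phi^{(p)}}\psi\phi$ where $\phi\colon E \to E_2$ is a cyclic $d_1$-isogeny and $(E_2,\psi)$ is a $(d,\epsilon)$-structure for some $\epsilon \in \{\pm 1\}$, with $d_1$, $d$ coprime and both coprime to $p$, and $d$ square-free. The key observation is that $\alpha$ is precisely the associated endomorphism of the $(d_1^2 d, \epsilon)$-structure $(E, \widehat{\phi^{(p)}}\psi\phi)$: indeed, Proposition~\ref{prop:reflection} tells us exactly that $(E, \widehat{\phi^{(p)}}\psi\phi)$ is a $(d_1^2 d, \epsilon)$-structure, and then $\alpha = \pi \cdot (\widehat{\phi^{(p)}}\psi\phi)$ is its associated endomorphism in the sense of the discussion following Definition~\ref{def:depsilon}.

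Next I would invoke Proposition~\ref{prop:associated_trace}, applied to the $(d_1^2 d, \epsilon)$-structure $(E, \widehat{\phi^{(p)}}\psi\phi)$ with the integer $d_1^2 d$ in place of $d$. This requires checking the hypothesis of that proposition, namely that $d_1^2 d$ is coprime to $p$ and that $p > 3$; both hold since $d_1$ and $d$ are coprime to $p$ (so $d_1^2 d$ is too) and the standing assumption throughout is $p > 3$. Proposition~\ref{prop:associated_trace} then gives $\mu^2 = [-(d_1^2 d) p]$ for the associated endomorphism $\mu$, which is our $\alpha$; that is, $\alpha^2 = [-d_1^2 d p]$. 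Since $\alpha^2 = -d_1^2 d p$ is a negative integer, $\alpha$ satisfies $x^2 + d_1^2 d p = 0$, so its reduced characteristic polynomial is $x^2 + d_1^2 d p$ and hence $\Trd(\alpha) = 0$, i.e. $\alpha$ has trace zero.

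There is no real obstacle here — the proposition is flagged in the text as following immediately from Propositions~\ref{prop:associated_trace} and~\ref{prop:reflection}, and the only thing requiring a moment's care is making sure the coprimality and $p>3$ hypotheses of Proposition~\ref{prop:associated_trace} are genuinely satisfied for the composite degree $d_1^2 d$, which they are by construction. One could also note, for the ``in particular'' clause, that an element of a quaternion order whose square is a negative integer $-n$ automatically has reduced trace $0$: from $\alpha \bar\alpha = \Nrd(\alpha)$ and $\alpha + \bar\alpha = \Trd(\alpha)$ one gets $\alpha^2 = \Trd(\alpha)\alpha - \Nrd(\alpha)$, and comparing with $\alpha^2 = -n \in \ZZ$ forces $\Trd(\alpha) = 0$ (as $\alpha \notin \QQ$, being an endomorphism of a supersingular curve whose square is negative).
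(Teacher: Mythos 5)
Your proposal is correct and is exactly the paper's intended argument: the paper states that the proposition ``follows immediately from Propositions~\ref{prop:associated_trace} and~\ref{prop:reflection},'' and you have filled in precisely that chain---recognizing $(E,\widehat{\phi^{(p)}}\psi\phi)$ as a $(d_1^2d,\epsilon)$-structure via Proposition~\ref{prop:reflection} and then applying Proposition~\ref{prop:associated_trace} with $d_1^2d$ in place of $d$ (correctly noting that the latter needs only coprimality to $p$, not square-freeness). Nothing further is required.
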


We show in Lemma~\ref{lem:reflect cyclic}  that the kernel of an inseparable reflection is cyclic. For this, we need the following lemma. This will be needed in~\ref{sec:bass} when we study orders generated by two or more inseparable reflections.

\begin{lem}\label{lem:cyclic composition}
Let $E_1,E_2$, and $E_3$ be elliptic curves defined over $\FF_q$ and let $\phi_1\colon E_1\to E_2$ and $\phi_2\colon E_2\to E_3$ be separable, cyclic isogenies. Then $\ker(\phi_2\phi_1)$ is cyclic if and only if $\ker\widehat{\phi_1}\cap \ker\phi_2$ is trivial. 
\end{lem}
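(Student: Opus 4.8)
The plan is to reduce the statement about cyclicity of the composition $\phi_2\phi_1$ to a statement about the structure of its kernel as an abelian group, and then to analyze that kernel using the kernels of $\phi_1$ and $\widehat{\phi_1}$. Recall that for a separable isogeny $\phi\colon E_1\to E_2$, the kernel is cyclic if and only if it is a cyclic group; since $\ker(\phi_2\phi_1)$ is a finite abelian group that embeds (via $\phi_1$) into $E_2$ with image contained in $\ker\phi_2$, the key is to understand how $\ker\phi_2\phi_1$ sits as an extension of $\phi_1(\ker\phi_2\phi_1)\subseteq\ker\phi_2$ by $\ker\phi_1$. Concretely, $\phi_1$ restricts to a surjection $\ker(\phi_2\phi_1)\twoheadrightarrow \ker\phi_2\cap \phi_1(E_1)=\ker\phi_2$ (using that $\phi_1$ is surjective on $\overline{\FF_q}$-points) with kernel $\ker\phi_1$, so we get a short exact sequence $0\to \ker\phi_1\to \ker(\phi_2\phi_1)\to \ker\phi_2\to 0$.

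First I would set $n_1=\deg\phi_1$ and $n_2=\deg\phi_2$, so $\ker\phi_1\cong \ZZ/n_1\ZZ$ and $\ker\phi_2\cong \ZZ/n_2\ZZ$ by the cyclicity hypotheses, and $\ker(\phi_2\phi_1)$ has order $n_1 n_2$. A finite abelian group of order $n_1 n_2$ sitting in an extension of $\ZZ/n_2\ZZ$ by $\ZZ/n_1\ZZ$ is generated by at most two elements, so it is cyclic if and only if, for every prime $\ell$, the $\ell$-part is cyclic; and the $\ell$-part fails to be cyclic precisely when both $\ell\mid n_1$ and $\ell\mid n_2$ and moreover the extension does not "merge" the two cyclic $\ell$-parts into one. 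The cleanest way to detect this merging is via the pairing between $\ker\phi_1$ and $\ker\widehat{\phi_1}$: for a point $Q\in\ker\phi_2$, the preimages of $Q$ under $\phi_1$ form a coset of $\ker\phi_1$, and the order of the cyclic group generated by any preimage $\widetilde Q$ relative to $\ker\phi_1$ is governed by how $Q$ interacts with $\ker\widehat\phi_1$. I would make this precise using the Weil pairing, or more directly using the fact that $\widehat{\phi_1}\phi_1=[n_1]$: if $\widetilde Q\in E_1$ satisfies $\phi_1(\widetilde Q)=Q$ with $Q$ of order dividing $n_2$, then $[n_2]\widetilde Q\in\ker(\phi_2\phi_1)$ maps under $\phi_1$ to $[n_2]Q=$ (a point whose order divides $\gcd$), and one tracks when $[n_2]\widetilde Q$ lands in $\ker\phi_1$ generating all of it.

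The cleaner approach, which I expect to be the one the authors take, is the following: choose a generator $R$ of $\ker\phi_2$ and lift it to $\widetilde R\in E_1$ with $\phi_1(\widetilde R)=R$. Then $\ker(\phi_2\phi_1)=\langle \ker\phi_1,\widetilde R\rangle$. This group is cyclic if and only if there is a single generator, which happens if and only if $\langle\widetilde R\rangle \supseteq \ker\phi_1$, or more generally $\langle \widetilde R\rangle + \ker\phi_1$ is cyclic after suitable adjustment of the lift $\widetilde R$ by an element of $\ker\phi_1$. Applying $\phi_1$ and using $\widehat{\phi_1}\phi_1=[\deg\phi_1]$, one computes $\phi_1([\,\mathrm{ord}(R)\,]\widetilde R)=O$, so $[\mathrm{ord}(R)]\widetilde R\in\ker\phi_1$, and one shows that this element generates $\ker\phi_1$ (equivalently, $\ker(\phi_2\phi_1)$ is cyclic) exactly when $\widehat{\phi_1}$ does not kill any nonzero portion of $\phi_1(\ker(\phi_2\phi_1))$ beyond what is forced — i.e., when $\ker\widehat{\phi_1}\cap\ker\phi_2$ is trivial. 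Conversely, a nontrivial point $S\in\ker\widehat\phi_1\cap\ker\phi_2$ produces a subgroup of $\ker(\phi_2\phi_1)$ isomorphic to $\ZZ/\mathrm{ord}(S)\ZZ\times\ZZ/\mathrm{ord}(S)\ZZ$ inside the $\ell$-torsion of $E_1$ for $\ell\mid\mathrm{ord}(S)$, obstructing cyclicity.

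The main obstacle I anticipate is the bookkeeping in the forward direction: showing that triviality of $\ker\widehat\phi_1\cap\ker\phi_2$ forces $\ker(\phi_2\phi_1)$ to be cyclic, because this requires choosing the lift $\widetilde R$ compatibly and ruling out that the extension $0\to\ker\phi_1\to\ker(\phi_2\phi_1)\to\ker\phi_2\to 0$ is non-split in a way that still produces a non-cyclic group. The right tool is to localize at each prime $\ell$ and use the nondegeneracy of the Weil pairing $e_\ell$ on $E_1[\ell]$ together with the compatibility $e_{\ell}(\phi_1 P, Q)=e_\ell(P,\widehat{\phi_1}Q)$: a non-cyclic $\ell$-part of $\ker(\phi_2\phi_1)$ forces a rank-two subgroup of $E_1[\ell]$ on which the pairing restricts nondegenerately, and chasing this through $\phi_1$ and $\widehat{\phi_1}$ yields a nonzero element of $\ker\widehat\phi_1\cap\ker\phi_2$. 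I would organize the proof around this pairing computation rather than brute-force abelian group extension theory.
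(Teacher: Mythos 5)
Your plan is correct and matches the paper's proof in all essentials: the paper likewise writes $\ker(\phi_2\phi_1)=\ker\phi_1+\langle \widetilde{R}\rangle$ for a lift $\widetilde{R}$ of a generator of $\ker\phi_2$, observes that non-cyclicity forces $E_1[d]\subseteq\ker(\phi_2\phi_1)$ for some $d>1$, and pushes that full torsion subgroup through $\phi_1$ (using $\widehat{\phi_1}\phi_1=[\deg\phi_1]$ and the cyclicity of $\ker\phi_1$ and $\ker\phi_2$) to produce a nontrivial subgroup of $\ker\widehat{\phi_1}\cap\ker\phi_2$. The Weil pairing you invoke at the end is unnecessary---the direct computation you also sketch suffices---and for the other direction the paper simply factors $\widehat{\phi_1}$ and $\phi_2$ through the quotient by $G=\ker\widehat{\phi_1}\cap\ker\phi_2$ so that $\phi_2\phi_1$ factors through $[\#G]$, which is equivalent to your observation that $E_1[\mathrm{ord}(S)]\subseteq\ker(\phi_2\phi_1)$.
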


\begin{proof}
If $\ker\widehat{\phi_1}\cap\ker\phi_2 = G$ is nontrivial, let $\tau\colon E_2\to E'$ be a separable isogeny 
with kernel $G$, where $E'$ is an elliptic curve defined over $\FF_q$. Then, both $\widehat{\phi_1}$ and $\phi_2$ factor through $\tau$: there exist isogenies $\psi_1,\psi_2$ such that $\widehat{\phi_1} = \psi_1 \tau$ and $\phi_2 =\psi_2 \tau$. 
\begin{center}
 \begin{tikzcd}
E_1  \arrow[r, "\phi_1"] & E_2 \arrow[r,"\phi_2"] \arrow[d,"\tau"] & E_3  \\
& E'  \arrow[ur, "\psi_2" below] \arrow[ul, "\psi_1"]& \\
\end{tikzcd}
\end{center}

Then, 
\[
\phi_2 \phi_1 = \psi_2  \tau  \widehat{\tau}  \widehat{\psi_1} = \psi_2 \widehat{\psi_1} [\# G]
\]
does not have cyclic kernel. 

Now assume that $\ker(\phi_2 \phi_1)$ is not cyclic. Let $S\in E_2(\overline{\FF_q})$ such that $\ker \phi_2=\langle S\rangle$, the cyclic group generated by $S$, and let $Q\in E_1(\overline{\FF_q})$ such that $\phi_1(Q) = S$. 
Also let $P\in E_1(\overline{\FF_q})$ such that  $\langle P\rangle=\ker \phi_1$. 

First, we claim that $\ker(\phi_2\phi_1) = \langle P\rangle+\langle Q\rangle$. 
Let $P'\in \ker(\phi_2\phi_1)$. Then, $\phi_1(P')=[a]S$ for some $a$. Therefore, $P'-[a]Q\in \ker \phi_1$. Thus, 
\[
P' = (P'-[a]Q) + [a]Q \in \ker\phi_1 + \langle Q \rangle=\langle P \rangle+\langle Q \rangle,
\]
i.e. $\ker(\phi_2\phi_1) \subseteq \langle P\rangle + \langle Q \rangle$.
Since $\phi_1\left(\langle P\rangle + \langle Q\rangle\right)\subseteq \ker\phi_2$, we also have that 
$\ker(\phi_2\phi_1) \supseteq \langle P\rangle + \langle Q \rangle$. Thus, $\ker(\phi_2\phi_1) = \langle P\rangle+\langle Q\rangle$. 

Since we assume that $\ker(\phi_2\phi_1)$ is not cyclic, $\langle P \rangle + \langle Q \rangle$ contains $E_1[d]$ for some $d>1$. Note that $d$ and $\deg\phi_1$ are not coprime, since otherwise 
$\phi_1(E_1[d])=E_2[d]$ and thus $E_2[d]\subseteq \ker\phi_2$, contradicting the assumption that $\ker\phi_2$ is cyclic. Let $g=\gcd(d,\deg\phi_1)$. Then, $E_1[g]\subseteq E_1[d]$ and $E_1[g]\subseteq E_1[\deg\phi_1]$. Now we have that $\phi_1(E_1[g])\subseteq \ker \phi_2$ and also 
$\phi_1(E_1[g])\subseteq \ker\widehat{\phi_1} = \phi_1(E_1[\deg\phi_1])$, therefore $\phi_1(E_1[g])\subseteq  \ker\widehat{\phi_1} \cap \ker \phi_2$. Since $\phi_1$ is cyclic and $g>1$, 
$\phi_1(E_1[g])\not=0$, so $\ker\widehat{\phi_1}\cap \ker\phi_2\not=0$. 
\end{proof}

\begin{lem}\label{lem:reflect cyclic}
Let $E_1$ be a supersingular elliptic curve defined over $\FF_{p^2}$ and let 
$\alpha=\pi\widehat{\phi^{(p)}}\psi\phi$ be an inseparable reflection of degree $d_1^2dp$. Then the kernel of $\alpha$ is cyclic. 

\end{lem}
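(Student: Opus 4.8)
The plan is to analyze the kernel of $\alpha = \pi\widehat{\phi^{(p)}}\psi\phi$ by peeling off $\pi$ and studying the separable part $\widehat{\phi^{(p)}}\psi\phi$, which has degree $d_1^2 d$ coprime to $p$. Since $\pi$ has trivial kernel (it is the $p$-power Frobenius, which is purely inseparable), $\ker\alpha = \ker(\widehat{\phi^{(p)}}\psi\phi)$, so it suffices to show this composition of three separable isogenies has cyclic kernel. The strategy is to apply Lemma~\ref{lem:cyclic composition} twice: first to the composition $\psi\phi$, and then to the composition of $\widehat{\phi^{(p)}}$ with $\psi\phi$.

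First I would record that $\phi$ is cyclic by hypothesis (Definition~\ref{def:insepref}), and $\psi$ is cyclic because its degree $d$ is square-free. To apply Lemma~\ref{lem:cyclic composition} to $\psi\phi$, I need $\ker\widehat{\phi}\cap\ker\psi$ to be trivial. If this intersection were nontrivial, then $\phi$ would factor through a nontrivial isogeny $\phi'\colon E_1 \to E_2'$ (the one with kernel $\phi^{-1}$ of that intersection, or equivalently using the factorization in the proof of Lemma~\ref{lem:cyclic composition}) such that $E_2'$ carries a $(d,\epsilon)$-structure induced from $(E_2,\psi)$ — precisely the situation excluded by the definition of an inseparable reflection. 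I should be careful to spell out exactly why a nontrivial common factor of $\widehat{\phi}$ and $\psi$ yields such a forbidden factorization of $\phi$ with the $(d,\epsilon)$-structure descending; this is where Proposition~\ref{prop:reflection}-type bookkeeping enters, since one must exhibit the $(d,\epsilon)$-structure on the intermediate curve. Having established this, Lemma~\ref{lem:cyclic composition} gives that $\rho\coloneqq\psi\phi$ is a cyclic isogeny of degree $d_1 d$.

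Next I would apply Lemma~\ref{lem:cyclic composition} to $\widehat{\phi^{(p)}}\circ\rho$, which requires $\ker\widehat{\rho}\cap\ker\widehat{\phi^{(p)}}$ to be trivial. Here $\ker\widehat{\phi^{(p)}} = \widehat{\phi^{(p)}}^{\vee}$-image $= \phi^{(p)}(E_2^{(p)}[d_1])$ has order $d_1$, while $\ker\widehat{\rho} = \rho(E_1[d_1 d])$ has order $d_1 d$. The point is that the $d_1$-part of $\ker\widehat\rho$ lies on $E_3 = E_2^{(p)}$ and must be compared with $\ker\widehat{\phi^{(p)}}$, which also sits on $E_2^{(p)}$. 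I expect a short argument via the $(d,\epsilon)$-structure relation $\psi^{(p)} = \epsilon\widehat\psi$ translating the non-factorization hypothesis on $\phi$ into the required triviality of this second intersection — morally the map $\widehat{\phi^{(p)}}$ "undoes" the $\phi^{(p)}$-part and a common factor here would again force $\phi$ to factor through a curve with a $(d,\epsilon)$-structure. Once both intersections are shown trivial, two applications of Lemma~\ref{lem:cyclic composition} give that $\widehat{\phi^{(p)}}\psi\phi$ is cyclic, hence so is $\alpha$.

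The main obstacle I anticipate is the bookkeeping in the second step: tracking kernels through the Galois twist $\phi\mapsto\phi^{(p)}$ and the dual, and correctly translating "$\ker\widehat{\rho}\cap\ker\widehat{\phi^{(p)}}$ nontrivial" back into a nontrivial factorization of the original $\phi$ compatible with a $(d,\epsilon)$-structure on the intermediate curve. The identity $\ker\phi^{(p)} = \pi(\ker\phi)$ and Lemma~\ref{lem:phip_props} should make this routine, but care is needed to ensure the intermediate $(d,\epsilon)$-structure genuinely exists (using Proposition~\ref{prop:reflection}) so that the contradiction with Definition~\ref{def:insepref} is valid. If a direct argument proves awkward, an alternative is to compute $\ker\alpha$ directly from $\alpha^2 = [-d_1^2 d p]$ (Proposition~\ref{prop:trace_zero_insep}): $\ker\alpha \subseteq E_1[d_1^2 dp]$, and combined with $\deg\alpha = d_1^2 d p$ and the structure of $\alpha$ as $\pi$ times a separable map, cyclicity of $\ker\alpha$ reduces to showing no $E_1[\ell]$ with $\ell\mid d_1 d$ is contained in $\ker\alpha$, which again unwinds to the non-factorization condition.
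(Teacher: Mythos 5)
Your overall strategy is the same as the paper's: reduce to showing the separable part $\widehat{\phi^{(p)}}\psi\phi$ has cyclic kernel, invoke Lemma~\ref{lem:cyclic composition}, and derive a contradiction with the non-factorization clause of Definition~\ref{def:insepref}. (One small simplification you miss: your first intersection $\ker\widehat{\phi}\cap\ker\psi$ is automatically trivial because these groups have coprime orders $d_1$ and $d$, so no argument about factorizations is needed there; also your second intersection $\psi(\ker\widehat{\phi})\cap\ker\widehat{\phi^{(p)}}$ is just the image under $\psi$ of the paper's group $G=\ker\widehat{\phi}\cap\ker(\widehat{\phi^{(p)}}\psi)$, so the two decompositions are equivalent.)

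However, there is a genuine gap exactly where you flag the ``main obstacle'': you never actually show that a nontrivial intersection forces $\phi$ to factor through a curve carrying a $(d,\epsilon)$-structure, and the tool you propose for this, Proposition~\ref{prop:reflection}, cannot do the job. If $G\neq 0$ and $\tau\colon E_2\to E_3$ has kernel $G$, then conjugating $\psi$ by $\widehat{\tau}$ \emph{\`a la} Proposition~\ref{prop:reflection} only produces a $\bigl(d(\deg\tau)^2,\epsilon\bigr)$-structure on $E_3$, whereas Definition~\ref{def:insepref} is only violated if $E_3$ has a genuine $(d,\epsilon)$-structure with the same square-free $d$. Closing this gap is the bulk of the paper's proof: one first proves the claim $\pi\psi(G)=G$ (using that $\psi(G)$ and $\pi(G)$ are each the unique subgroup of order $\#G$ of the cyclic group $\pi(\ker\widehat{\phi})$, and that $G$ is $\FF_{p^2}$-rational), deduces that $\rho=\tau\pi\psi\widehat{\tau}$ kills $E_3[\deg\tau]$ and hence that $\mu=\rho/\deg\tau$ is an honest endomorphism of $E_3$ with $\mu^2=-dp$, and only then concludes via \cite[Lemma~1]{CS21} that $E_3$ has a $(d,\epsilon)$-structure. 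Without this divisibility argument the contradiction with Definition~\ref{def:insepref} does not materialize, so as written the proposal is an outline rather than a proof. Your fallback via $\alpha^2=[-d_1^2dp]$ has the same status: ``which again unwinds to the non-factorization condition'' is a restatement of the problem, not a solution.
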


\begin{proof}
It suffices to show that $\widehat{\phi^{(p)}}\psi\phi\colon E_1\to E_1^{(p)}$ has cyclic kernel. 
Assume that $\ker\widehat{(\phi^{(p)}}\psi \phi)$ is not cyclic. Let $E_2$ be the codomain of $\phi$. We show that there is an isogeny $\tau\colon E_2\to E_3$ such that $\ker \tau\subseteq \ker\widehat{\phi}$ and $E_3$ has a $(d,\epsilon)$-structure. By Lemma~\ref{lem:cyclic composition}, we have that $G=\ker\widehat{\phi}\cap \ker\widehat{\phi^{(p)}}\psi\not=0$. Note that $G$ is defined over $\FF_{p^2}$, since it is contained in $\ker\widehat{\phi}$ which is defined over $\FF_{p^2}$. Let $\tau\colon E_2\to E_3$ be an isogeny defined over $\FF_{p^2}$ with kernel $G$.

\begin{center}
 \begin{tikzcd}
E_1  \arrow[r, "\phi"] \arrow[d, "\pi" left] & E_2 \arrow[r,"\tau"] \arrow[d,"\psi"] & E_3  \\
 E_1^{(p)}  \arrow[r, "\phi^{(p)}" ] & E_2^{(p)} &  \\
\end{tikzcd}
\end{center}

We show that $E_3$ has an $(d,\epsilon)$-structure. By \cite[Lemma~1]{CS21}, it suffices to show that $\End(E_3)$ contains a quadratic order isomorphic to $\ZZ[\sqrt{-dp}]$.

First, we claim that $\pi\psi(G) = G$. Since $G\subseteq \ker(\widehat{\phi^{(p)}}\psi)$, we have that 
\[
\psi(G) \subseteq \ker\widehat{\phi^{(p)}} = \pi(\ker\widehat{\phi}).
\] 
Since $\gcd(d_1,d)=1$, we see that $\psi$ 
induces an isomorphism $E_2[d_1]\to E_2^{(p)}[d_1]$. Thus, since $G\subseteq E_2[d_1]$, we have $\#\psi(G) = \#G$. Moreover, $\ker\widehat{\phi}$ is cyclic, so $\pi(\ker\widehat{\phi})$ is also cyclic. Therefore, $\psi(G)$ is the 
unique subgroup of $\pi(\ker\widehat{\phi})$ of order $\#G$. Since the unique 
subgroup of $\ker\widehat{\phi}$ of order $\#G$ is also $G$, we have
\[
\psi(G) = \pi(G).
\]
From this we conclude that 
\[
\pi\psi(G) = \pi(\pi(G)) =G,
\]
where the last equality holds since $\tau$ is defined over $\FF_{p^2}$. Therefore the proof of the claim is complete. 

Now consider the endomorphism 
\[
\rho = \tau\pi\psi\widehat{\tau}\in \End(E_3).
\]
We claim that $\rho(E_3[\deg\tau])=0$. Indeed, 
\[
\rho(E_3[\deg\tau]) = \tau\pi\psi\widehat{\tau}(E_3[\deg\tau]) = \tau\pi\psi(\ker \tau) = \tau\pi\psi(G) = \tau(G) = 0.
\]
Thus, $\mu=\frac{1}{\deg\tau}\rho$ is an endomorphism of $E_3$. Observe that 
\[
\mu^2 = \frac{1}{(\deg\tau)^2} \tau\pi\psi\widehat{\tau}\tau\pi\psi\widehat{\tau} = \frac{1}{\deg\tau} \tau\pi\psi\pi\psi\widehat{\tau} = \frac{-dp}{\deg\tau}\tau\widehat{\tau} = -dp,
\]
so $\ZZ[\mu]\cong \ZZ[\sqrt{-dp}]$. As mentioned above, by Lemma 1 of~\cite{CS21}, it follows that $E_3$ has a $(d,\epsilon)$-structure (indeed, $\mu=\pi\psi'$ for an isogeny $\psi'\colon E_3\to E_3^{(p)}$, and $(E_3,\psi')$ is the desired $(d,\epsilon)$-structure). 

\end{proof}

\subsection{Quaternionic suborders of \texorpdfstring{$\End(E)$}{EndE} generated by inseparable reflections}\label{sec:bass}

In this section we study orders generated in $\End(E)$ by two inseparable reflections. The main result in this section, Theorem~\ref{thm:bass_from_inseps}, shows that assuming some mild restrictions on their degrees, two inseparable reflections generate a Bass suborder of $\End(E)$. First, we use Lemma \ref{prop:trace_zero_insep}, Lemma \ref{lem:cyclic composition} and Lemma \ref{lem:reflect cyclic} to give sufficient conditions for two inseparable endormophisms to not commute and therefore to generate a quaternionic suborder of $\End(E)$.
\begin{thm}\label{thm:genorder}
Let $E$ be a supersingular elliptic curve defined over $\FF_{p^2}$, and let $\alpha_1=\pi \widehat{\phi_1^{(p)}}\psi_1 \phi_1$ and $\alpha_2=\pi\widehat{\phi_2^{(p)}}\psi_2 \phi_2$ be inseparable reflections of degree $d_1^2dp$ and $d_2^2dp$.  If $\ker\phi_1\not=\ker\phi_2$, then  $\alpha_1$ and $\alpha_2$  do not commute. 
\end{thm}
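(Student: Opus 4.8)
The plan is to argue by contradiction: suppose $\alpha_1$ and $\alpha_2$ commute in the quaternion algebra $\End^0(E)$. By Proposition~\ref{prop:trace_zero_insep}, each $\alpha_i$ has trace zero and satisfies $\alpha_i^2 = [-d_i^2 d p]$, so each $\alpha_i$ generates an imaginary quadratic field $\QQ(\alpha_i) = \QQ(\sqrt{-d_i^2 d p}) = \QQ(\sqrt{-dp})$ inside $\End^0(E)$. In a quaternion algebra, two elements that commute and both lie outside the center $\QQ$ must lie in a common maximal subfield; since both subfields equal $\QQ(\sqrt{-dp})$, commuting would force $\alpha_2 \in \QQ(\alpha_1) = \QQ \oplus \QQ\alpha_1$. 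Writing $\alpha_2 = s + t\alpha_1$ with $s,t \in \QQ$ and taking reduced traces, $\Trd(\alpha_2) = 2s + t\,\Trd(\alpha_1)$ gives $s = 0$; comparing $\alpha_2^2 = t^2\alpha_1^2$ with the known norms forces $t^2 = d_2^2/d_1^2$, so $\alpha_2 = \pm (d_2/d_1)\alpha_1$, and after clearing denominators $d_1\alpha_2 = \pm d_2\alpha_1$ as an equality of endomorphisms (hence of isogenies up to sign).

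The next step is to turn this algebraic identity into a statement about kernels and derive a contradiction with $\ker\phi_1 \neq \ker\phi_2$. Peeling off the common Frobenius $\pi$ on the left, the identity $d_1\alpha_2 = \pm d_2\alpha_1$ becomes $[d_1]\,\widehat{\phi_2^{(p)}}\psi_2\phi_2 = \pm [d_2]\,\widehat{\phi_1^{(p)}}\psi_1\phi_1$ as isogenies $E \to E^{(p)}$ (using that $\pi$ is a purely inseparable isogeny, so it can be cancelled on the left as a map of the underlying groups, or equivalently that both sides have the same kernel forces the separable parts to agree). Both $\widehat{\phi_i^{(p)}}\psi_i\phi_i$ are separable of degree $d_i^2 d$ with cyclic kernel by Lemma~\ref{lem:reflect cyclic}. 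The kernels of the two sides must therefore coincide: $\ker\big([d_1]\widehat{\phi_2^{(p)}}\psi_2\phi_2\big) = \ker\big([d_2]\widehat{\phi_1^{(p)}}\psi_1\phi_1\big)$. Intersecting with the $d_1 d_2$-torsion and using that $\gcd(d_1,d_2)=1$ and both $d_i$ are coprime to $\deg\psi_i = d$ and to $p$, the $d_1$-primary part of the left kernel is exactly $\ker\phi_1$ (the only part of $\widehat{\phi_1^{(p)}}\psi_1\phi_1$ supported on the $d_1$-torsion, since $\psi_1$ is an isomorphism on $d_1$-torsion and $\widehat{\phi_1^{(p)}}$ is a $d_1$-isogeny whose kernel lies on the $E_2^{(p)}$ side), while on the right side $[d_1]$ contributes all of $E[d_1]$, and one checks the $d_1$-part of $\ker\big(\widehat{\phi_1^{(p)}}\psi_1\phi_1\big)$ together with $E[d_1]$ recovers $E[d_1^2]\cap(\text{something})$; comparing the $d_1$-part on both sides isolates $\ker\phi_1 = \ker\phi_2$, a contradiction.

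I expect the \textbf{main obstacle} to be the bookkeeping in that last step: extracting exactly the right torsion piece from the kernel of a composite isogeny $\widehat{\phi^{(p)}}\psi\phi$ post-composed with a multiplication map, and making rigorous the claim that the $d_1$-primary part of the kernel genuinely is $\ker\phi_1$. The cleanest route is probably to avoid kernels of composites altogether: from $d_1\alpha_2 = \pm d_2\alpha_1$ in $\End(E)$, precompose with $\widehat{\alpha_1}$ (or use $\alpha_1^2 = [-d_1^2 dp]$) to solve for a relation expressing $\phi_2$, $\psi_2$, $\widehat{\phi_2^{(p)}}$ in terms of the $\phi_1$-data, then use the coprimality $\gcd(d_1,d_2)=1$ to force $d_1 \mid d_2$ and $d_2\mid d_1$ hence $d_1 = d_2$, reduce to $\alpha_2 = \pm\alpha_1$, and then directly compare kernels of the two equal (up to sign) endomorphisms of the same degree $d_1^2 dp$ — their separable parts have equal cyclic kernels, and the only $d_1$-torsion subgroup of that kernel is $\ker\phi_i$, giving $\ker\phi_1 = \ker\phi_2$ immediately. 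The subtlety to handle carefully is the sign and the fact that $\alpha_i$ and $-\alpha_i$ have the same kernel, so "$\alpha_2 = \pm\alpha_1$" already suffices to equate kernels.
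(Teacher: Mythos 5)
Your opening reduction is correct and is essentially the paper's first step: commutativity plus $\Trd\alpha_i=0$ and $\alpha_i^2=[-d_i^2dp]$ forces a scalar relation, which you normalize to $d_1\alpha_2=\pm d_2\alpha_1$. The final step is also fine: once one knows $\alpha_1=\pm\alpha_2$, the kernels agree, the degrees force $d_1=d_2$, and since $\ker\alpha_i$ is cyclic its unique subgroup of order $d_i$ is $\ker\phi_i$, giving $\ker\phi_1=\ker\phi_2$. This matches the paper's conclusion.

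The genuine gap is the middle step: getting from $d_1\alpha_2=\pm d_2\alpha_1$ to $\alpha_1=\pm\alpha_2$. Neither of your two proposed completions works as written. The kernel-bookkeeping route mixes up which side carries which $\phi_i$ (you describe the ``$d_1$-primary part of the left kernel'' as $\ker\phi_1$, but the left side is $[d_1]\widehat{\phi_2^{(p)}}\psi_2\phi_2$) and is never actually carried out. The ``cleanest route'' invokes $\gcd(d_1,d_2)=1$, which is \emph{not} a hypothesis of Theorem~\ref{thm:genorder} (it is only assumed later, in Proposition~\ref{prop:gorenstein}), and the claim that coprimality ``forces $d_1\mid d_2$ and $d_2\mid d_1$ hence $d_1=d_2$'' is both unexplained and incoherent as stated (coprimality together with mutual divisibility would give $d_1=d_2=1$). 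The correct mechanism, which is exactly what the paper's proof supplies, is to use the cyclicity of $\ker\alpha_i$ (Lemma~\ref{lem:reflect cyclic}) at this juncture: applying $[d_1]\alpha_2=\pm[d_2]\alpha_1$ to $E[d_1]$ shows $[d_2]\alpha_1(E[d_1])=0$, and since $\alpha_1(E[d_1])$ is also killed by $d_1$, setting $g=\gcd(d_1,d_2)$ one gets $\alpha_1(E[d_1/g])=\alpha_1([g]E[d_1])=[g]\alpha_1(E[d_1])=0$; a cyclic kernel cannot contain a full torsion subgroup $E[m]$ with $m>1$, so $d_1/g=1$, i.e.\ $d_1\mid d_2$, and symmetrically $d_2\mid d_1$. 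This is the divisibility argument (the paper phrases it as showing $k\mid n$ and then $n/k=\pm1$ for the integer relation $[k]\alpha_1=[n]\alpha_2$) that your proposal is missing; you cite the cyclicity lemma but never deploy it where it is actually needed.
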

\begin{proof}
 Assume that $\alpha_1$ and $\alpha_2$ commute. Then,
$\QQ(\alpha_1) = \QQ(\alpha_2)$, so there exist integers 
$k,m,n$ such that $[k]\alpha_1=[m]+[n] \alpha_2$. By Lemma~\ref{prop:trace_zero_insep}, we have 
$\Trd(\alpha_1)=\Trd(\alpha_2)=0$, so $m=0$ and $[k]\alpha_1=[n] \alpha_2$. 
We claim that $k|n$. Write $n=kq+r$ with $0\leq r<k$. Note that 
since $[n](\alpha_2(E[k]))=0$, we also have $[r](\alpha_2(E[k]))=0$. This implies 
$\alpha_2\left(E\left[\frac{k}{\gcd(k,r)}\right]\right) = 0$. The kernel of $\alpha_2$ is cyclic by 
Lemma~\ref{lem:reflect cyclic}, so we must have that $k/\gcd(k,r)=1$ and 
hence $\gcd(k,r)=k$ implying $r=0$. Thus $k|n$. Therefore 
$\alpha_1=[n/k]\alpha_2$. Now, since $\alpha_1$ has cyclic kernel by 
Lemma~\ref{lem:reflect cyclic}, we conclude $n/k=\pm 1$. Thus $\alpha_1=\pm 
\alpha_2$ so $\ker\alpha_1 = \ker\alpha_2$ and $\deg\phi_1 = \deg\phi_2$. Therefore, using the property that $\ker\alpha_i$ is cyclic for $i=1,2$, we obtain  $\ker\phi_1$ = $\ker\phi_2$.
 \end{proof}

 We now show that we can control arithmetic properties of an order generated by two inseparable reflections with mild assumptions on their degrees. 

\begin{prop}\label{prop:gorenstein}
Let $E$ be a supersingular elliptic curve defined over $\FF_{p^2}$, and let $d_1,d_2,d$ be three pairwise coprime integers, with $d$ square-free. For $i=1,2$, let $\alpha_i=\pi \widehat{\phi_i^{(p)}}\psi_i \phi_i\in\End(E)$  be an inseparable reflection of degree $d_i^2dp$.

\begin{enumerate}[(i)]
\item\label{prop:gor:order} The endomorphisms $1,\alpha_1,\alpha_2,\alpha_1\alpha_2$ generate an 
order 
\[\Lambda_{\alpha_1\alpha_2}:=\mathbb Z+\mathbb Z\alpha_1+\mathbb Z\alpha_2+\mathbb Z\alpha_1\alpha_2\subseteq \End(E).
\]
\item\label{prop:gor:disc} The endomorphism $\alpha_1\alpha_2$ factors through the multiplication-by-$p$ map, so 
\[
\rho\coloneqq\frac{-\alpha_1\alpha_2}{p} 
\]
is an endomorphism of $E$. The discriminant of $\Lambda_{\alpha_1\alpha_2}$ is 
\[
\disc(\Lambda_{\alpha_1\alpha_2}) = p^4\cdot((\Trd\rho)^2 - 4\deg\rho)^2 = p^4\cdot (\disc\rho)^2.
\]
\item\label{prop:gor:gor} The order  $\Lambda_{\alpha_1\alpha_2}$ is Gorenstein. 
\end{enumerate}
 
\end{prop}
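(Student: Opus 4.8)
The plan is to prove the three parts of Proposition~\ref{prop:gorenstein} in order, leveraging the structural results already established for inseparable reflections.

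\textbf{Part (i): $\Lambda_{\alpha_1\alpha_2}$ is an order.} First I would observe that by Theorem~\ref{thm:genorder}, since $d_1$ and $d_2$ are coprime the kernels of $\phi_1$ and $\phi_2$ cannot coincide (indeed $\deg\phi_1 = d_1 \neq d_2 = \deg\phi_2$), so $\alpha_1$ and $\alpha_2$ do not commute; hence $1,\alpha_1,\alpha_2,\alpha_1\alpha_2$ span a rank-$4$ sublattice of $\End(E)$, i.e. $\Lambda_{\alpha_1\alpha_2}\otimes\QQ = \End^0(E)$. To see this is a ring, I would use Proposition~\ref{prop:trace_zero_insep}: each $\alpha_i$ has trace zero and $\alpha_i^2 = -d_i^2 d p \in \ZZ$. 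Thus $\alpha_1\alpha_2 + \alpha_2\alpha_1 = \Trd(\alpha_1\bar\alpha_2)\cdot(\text{something})$ — more carefully, since $\bar\alpha_i = -\alpha_i$, we have $\alpha_2\alpha_1 = \overline{\alpha_1\alpha_2}$, and $\alpha_1\alpha_2 + \overline{\alpha_1\alpha_2} = \Trd(\alpha_1\alpha_2) \in \ZZ$. Therefore $\alpha_2\alpha_1 \in \ZZ + \ZZ\alpha_1\alpha_2 \subseteq \Lambda_{\alpha_1\alpha_2}$, and $\alpha_1(\alpha_1\alpha_2) = \alpha_1^2\alpha_2 = -d_1^2dp\,\alpha_2$, $(\alpha_1\alpha_2)\alpha_2 = -d_2^2 dp\,\alpha_1$, while $(\alpha_1\alpha_2)\alpha_1 = \alpha_1(\alpha_2\alpha_1) \in \alpha_1\Lambda_{\alpha_1\alpha_2}$ which we reduce using the previous identities; similarly $\alpha_2(\alpha_1\alpha_2)$. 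Checking these finitely many products all land back in $\Lambda_{\alpha_1\alpha_2}$ establishes closure under multiplication, and $1\in\Lambda_{\alpha_1\alpha_2}$, so it is an order.

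\textbf{Part (ii): the discriminant formula.} Here I would first show $p \mid \alpha_1\alpha_2$ as an endomorphism, i.e. that $\alpha_1\alpha_2$ kills $E[p]$. Write $\alpha_i = \pi\widehat{\phi_i^{(p)}}\psi_i\phi_i = \pi\circ\beta_i$ where $\beta_i = \widehat{\phi_i^{(p)}}\psi_i\phi_i\colon E\to E^{(p)}$ is separable (as $\gcd(d_i^2 d, p) = 1$). Then $\alpha_1\alpha_2 = \pi\beta_1\pi\beta_2 = \pi\pi\,\beta_1^{(p)}\beta_2$ by Lemma~\ref{lem:phip_props}(\ref{phip_cd}), and $\pi\pi = \pi_E = [p]$ on these models (or $=\pm[p]$, but by Proposition~\ref{prop:associated_trace} we have $\pi_E = -\epsilon p$; in any case it is $[p]$ up to sign absorbed into $\rho$). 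Hence $\alpha_1\alpha_2 = \pm p\cdot(\beta_1^{(p)}\beta_2)$, so $\rho = -\alpha_1\alpha_2/p = \mp\beta_1^{(p)}\beta_2$ is a genuine separable endomorphism of $E$ of degree $\deg\beta_1\deg\beta_2 = d_1^2 d_2^2 d^2$. Then I would compute the Gram matrix of $\{1,\alpha_1,\alpha_2,\alpha_1\alpha_2\}$ with respect to $\Trd(x\bar y)$. Using $\Trd 1 = 2$, $\Trd\alpha_i = 0$, $\Nrd\alpha_i = d_i^2 dp$, $\alpha_1\alpha_2 = -p\rho$, $\Trd(\alpha_1\alpha_2) = -p\Trd\rho$, $\Nrd(\alpha_1\alpha_2) = p^2\deg\rho$, and cross-terms like $\Trd(\alpha_1\overline{\alpha_1\alpha_2}) = \Trd(\alpha_1\alpha_2\alpha_1) = \Trd(\alpha_1^2\alpha_2) = -d_1^2 dp\,\Trd\alpha_2 = 0$ (similarly for $\alpha_2$), and $\Trd(\alpha_1\bar\alpha_2) = -\Trd(\alpha_1\alpha_2) = p\Trd\rho$ — one finds the Gram matrix is block-ish and its determinant factors as $4\cdot(\text{a }3\times 3\text{ minor})$, yielding $\disc(\Lambda_{\alpha_1\alpha_2}) = p^4\big((\Trd\rho)^2 - 4\deg\rho\big)^2 = p^4(\disc\rho)^2$ after routine expansion. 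I would present this as a short determinant computation rather than grinding every entry.

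\textbf{Part (iii): Gorenstein.} The cleanest route is the ternary-form criterion for Gorenstein orders: a quaternion order $\OO = \ZZ + \ZZ\alpha_1 + \ZZ\alpha_2 + \ZZ\alpha_1\alpha_2$ of this "generated by two trace-zero elements" shape is Gorenstein if and only if the associated ternary quadratic form is primitive (see Voight, \cite{Voight}, or Brzezinski's classification). Concretely, the ternary form here, up to scaling, has coefficients determined by $d_1^2 dp$, $d_2^2 dp$, and $\disc\rho$; its content divides $\gcd(d_1^2 dp,\, d_2^2 dp,\, \ldots)$. I would argue primitivity by a local check at each prime $\ell$: at $\ell = p$ the order is not maximal (reduced discriminant divisible by $p^2$ per Part (ii) and the remark after Proposition~\ref{prop:z+p}), but one shows it is still Gorenstein at $p$ because $\ZZ + P$-type local structure is Gorenstein; at $\ell \mid d_1$ (resp. $d_2$) one uses that $d_1,d_2,d$ are pairwise coprime so $\ell$ divides exactly one of the norms $d_i^2 dp$ to odd-or-controlled valuation, keeping the ternary form primitive at $\ell$; at $\ell \mid d$ similarly $\ell$ divides $d$ only to the first power (as $d$ is squarefree) and is coprime to $d_1 d_2$; at all other $\ell$ the order is maximal hence Gorenstein. \textbf{The main obstacle} I anticipate is precisely this local analysis at primes dividing $d_1 d_2 d$: one must track the $\ell$-adic valuations in the Gram/ternary data carefully enough to conclude the content is not divisible by $\ell$, and handle the prime $p$ (where the order is non-maximal) separately using the structure theory — this is where the pairwise-coprimality and squarefreeness hypotheses do the real work. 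I would isolate this as the technical heart of the proof, invoking \cite[Chapter 24]{Voight} for the dictionary between Gorenstein orders, ternary forms, and their local contents.
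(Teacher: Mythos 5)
Parts (i) and (ii) of your proposal follow the paper's argument and are essentially correct. Two small cautions on (ii): the paper's cleaner route to showing $p\mid\alpha_1\alpha_2$ is to write $-\alpha_1\alpha_2=\widehat{\alpha_1}\alpha_2$ and observe that $\widehat{\pi}\pi=[p]$ appears in the middle of the composition, which sidesteps the $\pm\epsilon$ bookkeeping you wave away; and your description of the determinant as ``$4$ times a $3\times 3$ minor'' is not right, since $\Trd(\overline{\alpha_1\alpha_2})=-p\Trd\rho$ is a nonzero off-diagonal entry pairing $1$ with $\alpha_1\alpha_2$. The Gram matrix instead splits into two $2\times 2$ blocks on the pairs $\{1,\alpha_1\alpha_2\}$ and $\{\alpha_1,\alpha_2\}$, each contributing a factor $p^2\lvert\disc\rho\rvert$ (here one uses $4\Nrd(\alpha_1)\Nrd(\alpha_2)=4\Nrd(\alpha_1\alpha_2)$). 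Your stated final formula is correct, but the computation as described would not produce it.

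The genuine gap is in part (iii). You correctly identify the criterion (Gorenstein if and only if the associated ternary form is primitive), but you never compute the form, and the local analysis you propose in its place --- which you yourself flag as ``the technical heart'' --- is both unnecessary and, as sketched, unsound. The paper's proof is a short direct computation: the basis $1,\,i=\alpha_1,\,j=\alpha_2,\,k=\alpha_2\alpha_1$ is a good basis in the sense of \cite[22.4.7]{Voight}, with $jk=pdd_2^2\,\widehat{i}$, $ki=pdd_1^2\,\widehat{j}$, and crucially $ij=\alpha_1\alpha_2=\widehat{\alpha_2\alpha_1}=1\cdot\widehat{k}$, so the ternary form is $pdd_2^2x^2+pdd_1^2y^2+z^2-tpxy$. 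Its $z^2$-coefficient is $1$, so primitivity is immediate at every prime, including $p$; no case analysis is needed and the pairwise-coprimality hypotheses play no role in part (iii) (they are used for (i) and for Theorem~\ref{thm:bass_from_inseps}). By contrast, your sketch asserts that the content ``divides $\gcd(d_1^2dp,\,d_2^2dp,\,\ldots)$'' with the decisive third coefficient hidden in the ellipsis, and your treatment of $\ell=p$ (``$\ZZ+P$-type local structure is Gorenstein'') is not an argument: $\Lambda_{\alpha_1\alpha_2}$ localized at $p$ need not look like $\ZZ+P$ (its reduced discriminant at $p$ can exceed $p^2$ when $p\mid\disc\rho$; cf.\ Remark~\ref{rmk:LambdaRho}). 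As written, the proof of (iii) is not complete; completing it along your lines would amount to computing the ternary form anyway, at which point the local analysis evaporates.
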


\begin{proof}
 Lemma \ref{prop:trace_zero_insep} implies that $\alpha_1$ and $\alpha_2$ are non-scalar endomorphisms.  Since $d_1\not=d_2$, we have that $\ker\phi_1\not=\ker\phi_2$ so $\alpha_1\alpha_2\not=\alpha_2\alpha_1$ by Theorem~\ref{thm:genorder}. The endomorphisms $\alpha_1$ and $\alpha_2$ are noncommuting, nonscalar elements of $\End^0(E)$, so $\Lambda_{\alpha_1\alpha_2}$ is a lattice in $\End^0(E)$. Since $\alpha_1,\alpha_2,\alpha_1\alpha_2$ are integral, the lattice $\Lambda_{\alpha_1\alpha_2}$ is a ring containing $1$, so it is an order, completing the proof of part~(\ref{prop:gor:order}).
 
To prove part~(\ref{prop:gor:disc}) we compute $\discrd(\Lambda_{\alpha_1\alpha_2})$. Since $\Trd \alpha_i=0$, we have $\widehat{\alpha_i}=-\alpha_i$, so 
\[
\rho=\frac{-1}{p}\alpha_1\alpha_2 = \frac{1}{p}\widehat{\phi_1}\widehat{\psi_1}\widehat{\pi}\pi \phi_1^{(p)}\widehat{\phi_2^{(p)}}\psi_2 \phi_2 = \widehat{\phi_1}\widehat{\psi_1} \phi_1^{(p)}\widehat{\phi_2^{(p)}}\psi_2 \phi_2.
\]
The Gram matrix of the basis  $1,\alpha_1,\alpha_2,\alpha_1\alpha_2$  is 
\[
G\coloneqq\begin{pmatrix}
2 & 0 & 0 & -p\Trd\rho \\ 
0 & 2pdd_1^2 & p\Trd\rho & 0 \\
0 & p\Trd\rho & 2pdd_2^2 & 0 \\
-p\Trd\rho & 0 & 0 & 2(pd_1d_2d)^2\\
\end{pmatrix}.
\]
A calculation shows that its determinant, and therefore the discriminant of $\Lambda{\alpha_1\alpha_2}$, is 
\[ 
\det(G)= p^4\cdot((\Trd\rho)^2 - 4\deg\rho)^2 = p^4\cdot (\disc\rho)^2.
\]

Finally we prove part~(\ref{prop:gor:gor}). 
We claim that the ternary quadratic form attached 
to $\Lambda_{\alpha_1\alpha_2}$ is 
\[
Q(x,y,z)=pdd_2^{2}x^2+pdd_1^{2}y^2+z^2-tpxy,
\]
where $t=\Trd\rho$.  
 A calculation shows the basis $1,i=\alpha_1,j=\alpha_2,k=\alpha_2\alpha_1$ of $\Lambda_{\alpha_1\alpha_2}$ is a {\em good basis} in the sense of~\cite[22.4.7]{Voight}, i.e., there exist integers $a,b,c,u,v,w$ satisfying $i^2=ui-bc$, $j^2=vj-ac$, $k^2=wk-ab$ and $jk=a\widehat{i}$, $ki=b\widehat{j}$, and $ij=c\widehat{k}$. Given a good basis, the corresponding ternary quadratic form is $ax^2+by^2+cz^2+uyz+vxz+wxy$ (see the proof of~\cite[Proposition 22.4.12]{Voight}).  
 \iffalse
 To see that we have a good basis, we compute
\begin{align*}
    i^2&=\alpha_1^2 =-pdd_1^{2}, \\
    j^2 &=\alpha_2^2=-pdd_2^{2}, \\
    k^2 &=(\alpha_2\alpha_1)^2 = \Trd(\alpha_2\alpha_1)(\alpha_2\alpha_1) - \deg(\alpha_2\alpha_1) = tpk-p^2d_1^{2}d_2^{2}d^2, \\ 
    jk &= \alpha_2^2\alpha_1= -pdd_2^{2}\alpha_1=pdd_2^{2}\widehat{\alpha_1}, \\ 
    ki &= \alpha_2\alpha_1^2 = pdd_1^{2}\widehat{\alpha_2}, \\ 
    ij &= \alpha_1\alpha_2 = \widehat{\alpha_2\alpha_1} = \widehat{k}. 
\end{align*}
We verify the computation for $jk$:
\begin{align*}
    jk &= \alpha_2\circ-(\widehat{\rho})\\ 
    &= \alpha_2\circ \frac{\alpha_2\alpha_1}{p} \\ 
    &= -d_3d_2^{2}p \frac{\alpha_1}{p} \\ 
    &= d_3d_2^{2}\widehat{\alpha_1}.
\end{align*}
We verify the computation for $ki$:
\begin{align*}
    ki &= -\widehat{\rho} \alpha_1 \\ 
    &= -\frac{\alpha_2\alpha_1}{p}\alpha_1 \\ 
    &= -\frac{\alpha_2}{p}d_1^{2}p \\ 
    &= -d_3d_1^{2}\alpha_2 \\ 
    &= d_3d_1^{2}\widehat{\alpha_2}.
\end{align*}
We verify the computation for $ij$:
\begin{align*}
    ij &= \alpha_1\alpha_2 \\ 
    &= -p\frac{-\alpha_1\alpha_2}{p} \\ 
    &= -p\rho \\ 
    &= -p\widehat{\widehat{-\rho}}.
\end{align*}

\noindent Thus, we have $a=pdd_2^{2},b=pdd_1^{2},c=1,u=v=0,w=tp$. 
Therefore the ternary quadratic form for $\Lambda$  is
\[
Q(x,y,z)=pdd_2^{2}x^2+pdd_1^{2}y^2+z^2-tpxy,
\]
as claimed.
\fi
In the case of the basis $1,\alpha_1,\alpha_2,\alpha_2\alpha_1$ for $\Lambda$, we have $a=pdd_2^2$, $b=pd_1^2$, $c=1$, $u=v=0$, and $w=-tp$. The quadratic form $Q$ is primitive since its  coefficients are coprime, so $\Lambda$ is Gorenstein by~\cite[Theorem 24.2.10]{Voight}. 
\end{proof}
\begin{rmk}\label{rmk:LambdaRho}
     The lattice $\Lambda_{\rho}:=\mathbb Z+\mathbb Z\alpha_1+\mathbb Z\alpha_2+\mathbb Z\rho$ in $\End(E)$ is also a suborder of $\End(E)$ and clearly $\Lambda_{\alpha_1\alpha_2}\subsetneq \Lambda_{\rho}$. 
The order $\Lambda_{\alpha_1\alpha_2}$ is non-maximal precisely at $p$ and the primes dividing the discriminant of $\rho$. Assuming that $p\nmid \disc(\rho)$, the order $\Lambda_{\rho}$ is the  unique $p$-maximal order containing $\Lambda_{\alpha_1\alpha_2}$ whose localizations at all $\ell\not=p$  agree with those of $\Lambda_{\alpha_1\alpha_2}$.
\end{rmk}
\begin{thm}\label{thm:bass_from_inseps}
Let $E$ be a supersingular elliptic curve defined over $\FF_{p^2}$, and let $d_1,d_2,d$ be three pairwise coprime integers, with $d$ square-free. For $i=1,2$, let \[
\alpha_i=\pi_p \widehat{\phi_i^{(p)}}\psi_i \phi_i\in\End(E)
\]
be an inseparable reflection of degree $d_i^2dp$. Finally, assume that $-dp\not\equiv 1\pmod{4}$. Then the order $\Lambda_{\alpha_1\alpha_2}$ is Bass. 
\end{thm}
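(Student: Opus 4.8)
The plan is to use the local characterization of Bass orders: an order $\Lambda$ in a quaternion algebra $B$ over $\QQ$ is Bass if and only if it is basic, i.e., for every prime $\ell$ it contains (after localization at $\ell$) a maximal order in a commutative $\ell$-adically-complete subalgebra; equivalently, for each prime $\ell$ there exists an imaginary quadratic order $R\subseteq\Lambda_{\alpha_1\alpha_2}$ whose conductor is coprime to $\ell$. So for each prime $\ell$ I need to exhibit such an $R$ inside $\Lambda_{\alpha_1\alpha_2}$.

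First I would dispose of the primes $\ell\nmid d_1 d_2 d p$, and more generally the primes where $\Lambda_{\alpha_1\alpha_2}$ is already maximal: by Proposition~\ref{prop:gorenstein}\ref{prop:gor:disc}, $\disc(\Lambda_{\alpha_1\alpha_2})=p^4(\disc\rho)^2$, so $\Lambda_{\alpha_1\alpha_2}$ is maximal away from $p$ and the primes dividing $\disc\rho$, and at a maximal prime any imaginary quadratic suborder will do (e.g. $\ZZ[\alpha_1]$, which is $\ZZ[\sqrt{-d_1^2dp}]$ by Proposition~\ref{prop:trace_zero_insep}). For the remaining primes I would use the three imaginary quadratic orders $\ZZ[\alpha_1]\cong\ZZ[\sqrt{-d_1^2dp}]$, $\ZZ[\alpha_2]\cong\ZZ[\sqrt{-d_2^2dp}]$, and $\ZZ[\rho]$ — the first two by Proposition~\ref{prop:trace_zero_insep}, and $\rho\in\Lambda_\rho$ but I must check $\rho\in\Lambda_{\alpha_1\alpha_2}$ — wait, $\rho=-\alpha_1\alpha_2/p\notin\Lambda_{\alpha_1\alpha_2}$ in general, so instead I should use $\alpha_1\alpha_2=-p\rho\in\Lambda_{\alpha_1\alpha_2}$ and the order $\ZZ[\alpha_1\alpha_2]$, whose conductor relative to the maximal order of $\QQ(\rho)$ acquires an extra factor of $p$. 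The key arithmetic input is: the conductor of $\ZZ[\alpha_i]$ in $\QQ(\sqrt{-dp})$ is $d_i$ when $-dp\equiv 2,3\pmod 4$ (which holds by the hypothesis $-dp\not\equiv 1\pmod 4$), so since $\gcd(d_1,d_2,d)=1$ pairwise, at any prime $\ell\mid d_1$ we have $\ell\nmid d_2$ and $\ell\nmid d$, hence $\ZZ[\alpha_2]=\ZZ[\sqrt{-d_2^2dp}]$ has conductor $d_2$ coprime to $\ell$; symmetrically for $\ell\mid d_2$ use $\ZZ[\alpha_1]$; and for $\ell\mid d$, both $\ZZ[\alpha_1]$ and $\ZZ[\alpha_2]$ have conductor coprime to $\ell$ since $\gcd(d,d_1)=\gcd(d,d_2)=1$. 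For $\ell=p$: I claim $\ZZ[\rho]$ — realized via $\rho=-\alpha_1\alpha_2/p$, which does lie in the larger order $\Lambda_\rho$, but for Bassness of $\Lambda_{\alpha_1\alpha_2}$ I need a quadratic suborder of $\Lambda_{\alpha_1\alpha_2}$ itself maximal at $p$; here I use that $p$ is ramified in $B$, so $\ZZ_p[\sqrt{-d_ip}]$ (from $\alpha_i$, noting $v_p(d_i^2 dp)=1$ since $p\nmid d_i,d$) is already the maximal $\ZZ_p$-order in $\QQ_p(\sqrt{-d_ip})$, which embeds in the unique maximal order of $B\otimes\QQ_p$; this shows $\Lambda_{\alpha_1\alpha_2}$ is basic at $p$.

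Concretely I would organize the proof as: (1) recall the criterion that it suffices to show, for each prime $\ell$, that $\Lambda_{\alpha_1\alpha_2}$ contains an imaginary quadratic order with conductor prime to $\ell$, citing~\cite[Proposition 24.5.10]{Voight} and the paragraph in Section~\ref{background_quaternion_algebras}; (2) compute the conductors of $\ZZ[\alpha_1]=\ZZ[\sqrt{-d_1^2dp}]$ and $\ZZ[\alpha_2]=\ZZ[\sqrt{-d_2^2dp}]$ in $\QQ(\sqrt{-dp})$, using $-dp\not\equiv 1\pmod 4$ to conclude the conductor of $\ZZ[\sqrt{-d_i^2dp}]$ is exactly $d_i$ (ring of integers is $\ZZ[\sqrt{-dp}]$); (3) for $\ell\nmid d_1 d_2 p$ use $\ZZ[\alpha_1]$ (conductor $d_1$, and actually $\Lambda$ is already maximal there anyway); for $\ell\mid d_1$ use $\ZZ[\alpha_2]$ (since then $\ell\nmid d_2 d$); for $\ell\mid d_2$ use $\ZZ[\alpha_1]$; (4) for $\ell = p$: since $p$ is ramified in $B_{p,\infty}$ and $p\parallel d_i^2 dp$, the order $\ZZ_p[\alpha_1]$ is the maximal $\ZZ_p$-order in the (ramified, hence field) algebra $\QQ_p(\alpha_1)$, so $\Lambda_{\alpha_1\alpha_2}$ is basic at $p$. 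The main obstacle — and the only genuinely delicate point — is step (2)/(4): I must be careful about whether $d_i$ is really the conductor and not off by a factor of $2$, which is exactly where the hypothesis $-dp\not\equiv 1\pmod 4$ is used (it guarantees $\ZZ[\sqrt{-dp}]=\OO_{\QQ(\sqrt{-dp})}$, so no subtlety at $2$), and I must double check that at $\ell = 2$ dividing some $d_i$ the argument via the other $\alpha$ still goes through (it does, since $2\nmid d_j$ and $2\nmid d$ forces the conductor $d_j$ of $\ZZ[\alpha_j]$ to be odd, hence prime to $2$). I would also remark, as the footnote in the introduction promises, that this argument does not require hereditarity of $\Lambda$, superseding the hypothesis in~\cite[Proposition 5.2]{EHLMP20}.
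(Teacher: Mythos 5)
Your proposal is correct and follows essentially the same route as the paper: reduce to the local property of being basic, compute that the conductor of $\ZZ[\alpha_i]\cong\ZZ[d_i\sqrt{-dp}]$ is exactly $d_i$ (using that $-dp$ is square-free and $\not\equiv 1\pmod 4$), and observe that pairwise coprimality of $d_1,d_2,d$ guarantees at least one of $\ZZ[\alpha_1],\ZZ[\alpha_2]$ is maximal at every prime. The only difference is that your separate treatment of $\ell=p$ is unnecessary: since $p\nmid d_i$, the conductor argument already covers that prime uniformly, which is all the paper uses.
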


\begin{proof}
Proposition~\ref{prop:gorenstein} implies that $\Lambda_{\alpha_1\alpha_2}$ is an order. 
We show that $\Lambda_{\alpha_1\alpha_2}$ is locally basic, and hence locally Bass by~\cite[Proposition 1.11]{Brz90} at every prime $\ell$. This suffices, since being Bass is a local property~\cite[Proposition 24.5.10]{Voight}. 

 Consider the quadratic order $R_i=\ZZ[\alpha_i]\cong \ZZ[d_i\sqrt{-dp}]$ in $\Lambda_{\alpha_1\alpha_2}\subseteq\End(E)$. Since $-dp$ is square-free and not congruent to $1$ modulo $4$, the maximal order in the fraction field of $R_i$ is isomorphic to $\ZZ[\sqrt{-dp}]$, so the conductor of $R_i$ is $d_i$. Then, for any prime $\ell$, at least one of $R_1$ or $R_2$ is maximal at $\ell$ since $R_1$ is maximal at every prime $\ell$ which does not divide $d_1$, and $R_2$ is maximal at every prime $\ell$ which does not divide $d_2$. This shows $\Lambda_{\alpha_1\alpha_2}$ is locally basic at each prime $\ell$.
\end{proof}

\section{Computing an order in \texorpdfstring{$\End(E)$}{EndE} with inseparable endomorphisms}\label{sec:algorithms}

By Proposition~\ref{prop:trace_zero_insep},  one inseparable reflection $\alpha_1$ of degree $d_1^2dp$ of a  supersingular elliptic curve $E$ defined over $\mathbb F_{p^2}$ generates an imaginary quadratic order of discriminant $-4d_1^2dp$. If $d_2$ is another integer coprime to $d_1$, if we assume $d$ is coprime to both $d_1$ and $d_2$, and let $\alpha_2$ be a $d_2^2dp$-inseparable reflection, then $\alpha_1$ and $\alpha_2$ generate an order $\Lambda_{\alpha_1\alpha_2} \coloneqq \langle \alpha_1,\alpha_2\rangle$ in $\End(E)$ which is Gorenstein by Proposition~\ref{prop:gorenstein}. If we assume $-dp\not\equiv 1\pmod{4}$ then $\Lambda_{\alpha_1\alpha_2}$ is Bass by Theorem~\ref{thm:bass_from_inseps}. Therefore, if we can compute inseparable reflections of $E$ for certain values of $d_1,d_2,$ and $d$, then we can compute orders with certain desirable arithmetic properties in $\End(E)$. We will make the results in Section~\ref{sec:inseparables} effective by giving an algorithm for computing one inseparable reflection and then another algorithm which uses inseparable reflections for computing a Bass order $\Lambda$ in $\End(E)$. 

Computing a $d_i^2dp$-inseparable reflection requires a  $d_i$-isogeny $\phi_i\colon E\to E_i$ where $E_i$ has a $d$-isogeny $\psi_i\colon E_i\to E_i^{(p)}$. Computing such a $d_i$-isogeny will be easiest when $d_i$ is smooth: if $d_i=\ell_i^{t_i}$ where $\ell_i$ is a small prime, then we can take random walks of length $t_i$ in the $\ell_i$-isogeny graph until finding a supersingular curve $E_i$ which is $d$-isogenous to $E_i^{(p)}$. The simplest choice for $d$ is $d=2$: in this case we have that $-dp\not\equiv 1\pmod{4}$ and it is easy to check whether $E_i$ is $2$-isogenous to $E_i^{(p)}$. We follow this strategy in Algorithm~\ref{alg:insependo}. We show that it correctly computes an inseparable reflection and analyze its complexity in Proposition~\ref{prop:specialendoruntime}. We conclude with Algorithm~\ref{alg:bass} which computes a Bass order in $\End(E)$ according to Theorem~\ref{thm:makebass}.

\subsection{Computing inseparable reflections}
We compute an inseparable reflection of degree $\ell^{2t}dp$ by taking random non-backtracking walks beginning at $E$ of length $t$, which correspond to cyclic $\ell^{t}$ isogenies $\phi\colon E\to E'$, until finding a $(d,\epsilon)$-structure $(E',\psi)$. The resulting inseparable reflection of $E$ is $\pi\widehat{\phi^{(p)}}\psi\phi$. In order to bound the expected runtime of this approach, we must consider the probability that a random non-backtracking walk of length $t$ terminates at a supersingular curve $E'$ with a $(d,\epsilon)$-structure. 

Let $p>3$ be a prime. We recall some notation for the supersingular $\ell$-isogeny graph $G(p,\ell)$ from Section~\ref{sec:background} and~\cite[Section 3]{secuer}. Let $V$ denote a complete set of 
representatives of isomorphism classes of supersingular elliptic curves over $\FF_{p^2}$.  Let $H$ be the $\CC$-vector space $H$ with basis $V$, let $\langle\cdot,\cdot\rangle$ be the inner product such that for $E,E'\in V$, we have $\langle E,E'\rangle=w_E$ if $E=E'$ and $0$ otherwise. Let $A$ denote the adjacency operator for $G(p,\ell)$.   A {\em walk} in the graph $G(p,\ell)$ is defined to be a sequence of edges $\phi_1,\phi_2,\ldots,\phi_k$ such that the codomain of $\phi_i$ is isomorphic to the domain of $\phi_{i+1}$. A walk {\em has no backtracking}, or is non-backtracking, if $\phi_{i+1}\not=u\widehat{\phi_i}$ for any automorphism $u$. Thus, non-backtracking walks in $G(p,\ell)$ beginning at $E\in V$ are in bijection with cyclic subgroups of $E[\ell^{\infty}]$.   Let $\mathcal{E} = \sum_{E\in V} w_E^{-1}E$, so $s = \frac{1}{\langle \mathcal{E},\mathcal{E}\rangle} \mathcal{E}$ is the stationary distribution for the random walk in $G(p,\ell)$. For a distribution $v=\sum_{E\in V} v_E E$ and a subset $X\subseteq V$, let $v(X)\coloneqq \sum_{E\in X}v_E$ denote the probability that a vertex sampled according to $v$ is an element of $X$. For two distributions $v,v'$ on $G(p,\ell)$, let 
\[
d_{TV}(v,v')= \sup_{X\subseteq V}|v(X)-v'(X)| = \frac{1}{2}|v-v'|_{1}
\]
denote the total variation distance between $v$ and $v'$. Let $P^{(t)}$ be the transition matrix for the non-backtracking random walk in $G(p,\ell)$ of length $t$; we remark that $P^{(t)}$ and $P^t$ are not the same matrix for $t>1$. Holding $\ell$ constant, the following proposition states that a non-backtracking walk of length $O(\log p)$ will land in a set $X\subseteq V$ with probability proportional to $\#X/\#V$.

\begin{prop}\label{prop:nbtmixing}
Let $E_0\in V$ and let $X\subseteq V$ be nonempty. If 
\[
t/2-\log_{\ell}\left(t+\frac{\ell-1}{\ell+1}\right) \geq \log_{\ell}\left(\frac{(p-1)^{3/2}}{24\sum_{E\in X}w_E^{-1}}\right),
\]
then a non-backtracking random walk of length $t$ beginning at $E_0$ lands in $X$ with 
probability at least 
\[
\frac{6}{p-1}\sum_{E\in X}w_E^{-1}.
\]
\end{prop}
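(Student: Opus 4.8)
The plan is to combine the Ramanujan/spectral mixing estimate for the \emph{non-backtracking} random walk in $G(p,\ell)$ with the explicit bound on the second-largest eigenvalue of the (ordinary) adjacency operator $A$. First I would recall the key tool: for a $(\ell+1)$-regular Ramanujan graph the non-backtracking walk has an explicit spectral decay, and there is a standard bound (as in Alon--Benjamini--Lubetzky--Sodin, or the treatment in~\cite{secuer}) expressing the transition operator $P^{(t)}$ of the length-$t$ non-backtracking walk in terms of Chebyshev-like polynomials in $P = \frac{1}{\ell+1}A$. Concretely, if $\lambda$ is a nontrivial eigenvalue of $A$ with $|\lambda|\le 2\sqrt{\ell}$, then writing $\lambda = 2\sqrt{\ell}\cos\theta$, the corresponding eigenvalue of $(\ell+1)P^{(t)}$ has magnitude bounded by roughly $(t+\tfrac{\ell-1}{\ell+1})\,\ell^{-t/2}$ times a bounded factor; this is exactly where the quantity $t/2 - \log_\ell(t + \tfrac{\ell-1}{\ell+1})$ on the left-hand side of the hypothesis comes from.

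Next I would set up the probabilistic estimate in the inner-product space $H$. Let $v^{(t)} = (P^{(t)})^\top e_{E_0}$ be the distribution after $t$ non-backtracking steps starting from $E_0$, and let $s = \mathcal{E}/\langle\mathcal{E},\mathcal{E}\rangle$ be the stationary distribution, so that $s(X) = \frac{1}{\langle\mathcal{E},\mathcal{E}\rangle}\sum_{E\in X} w_E^{-1} = \frac{12}{p-1}\sum_{E\in X} w_E^{-1}$. Decompose $e_{E_0}$ (or rather $v^{(t)}$) into its component along $\mathcal{E}$ plus a component orthogonal to $\mathcal{E}$; the $\mathcal{E}$-component is precisely $s$, and applying $P^{(t)}$ contracts the orthogonal component by the spectral factor above. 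The probability of landing in $X$ is $v^{(t)}(X) = \langle v^{(t)}, \mathbf{1}_X\rangle_{\text{std}}$, which I would estimate by $s(X)$ minus an error term controlled by $\|v^{(t)} - s\|$ (an $\ell^2$ or total-variation bound) times $\|\mathbf{1}_X\|$. Tracking constants carefully — using $\langle\mathcal{E},\mathcal{E}\rangle = \frac{p-1}{12}$, the bound $w_E \le 3$ so $w_E^{-1}\ge 1/3$, and $\#V \le \frac{p-1}{12}+2 \le \frac{p+23}{12}$ — the hypothesis on $t$ is exactly the inequality that forces the error term to be at most half of $s(X)$, i.e. at most $\frac{6}{p-1}\sum_{E\in X} w_E^{-1}$. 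Subtracting, $v^{(t)}(X) \ge s(X) - \frac{6}{p-1}\sum w_E^{-1} = \frac{6}{p-1}\sum_{E\in X} w_E^{-1}$, which is the claimed conclusion.

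The main obstacle I anticipate is purely bookkeeping of constants: getting the $(p-1)^{3/2}/24$ and the $t + \frac{\ell-1}{\ell+1}$ to come out exactly requires being precise about (i) the relationship between the non-backtracking transition operator's spectrum and that of $A$ — in particular the somewhat delicate estimate $|\mu_t| \le (t + \tfrac{\ell-1}{\ell+1})\ell^{-t/2}$ at the Ramanujan threshold $|\lambda| = 2\sqrt\ell$, where the naive Chebyshev bound degenerates and one needs the limiting form; (ii) passing from an operator-norm / $\ell^2$ bound on $v^{(t)} - s$ to the total-variation or pointwise statement about $v^{(t)}(X)$, which introduces a $\sqrt{\#V}$ or $\sqrt{\langle\mathcal E,\mathcal E\rangle}$ factor and a Cauchy--Schwarz step; and (iii) the crude but sufficient bounds relating $\#V$, $p-1$, and $\sum_{E\in X} w_E^{-1}$. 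None of these is conceptually hard, but the exponent $3/2$ and the constant $24$ (rather than $12$) signal that one worst-case inequality — presumably the $\sqrt{\#V}\le\sqrt{(p-1)/12 + 2}$-type step combined with halving the stationary mass — is being invoked, and I would want to isolate and state that as a lemma before assembling the final chain of inequalities.
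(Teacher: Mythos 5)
Your proposal matches the paper's proof: the paper simply cites Theorem 11 of~\cite{secuer} for the total variation bound $d_{TV}(v^{(t)},s)\leq \frac{(p-1)^{1/2}}{4}\left(t+\frac{\ell-1}{\ell+1}\right)\ell^{-t/2}$ and observes that the hypothesis on $t$ makes this at most $\frac{6}{p-1}\sum_{E\in X}w_E^{-1}=\frac{1}{2}s(X)$, whence $v^{(t)}(X)\geq s(X)-\frac{1}{2}s(X)$. Your identification of where the factors $(t+\frac{\ell-1}{\ell+1})\ell^{-t/2}$, $(p-1)^{1/2}$, and the halving (hence the $24=4\cdot 6$) come from is exactly right; the only difference is that you sketch the derivation of the mixing bound where the paper invokes it as a black box.
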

\begin{proof}
  Let $v^{(t)}=P^{(t)}E_0$ be the probability distribution on $V$ resulting from a random non-backtracking walk of length $t$ beginning at $E_0$. 
    Then, by Theorem 11 of~\cite{secuer}, 
    \[
    |v^{(t)}(X)-s(X)| \leq d_{TV}(v^{(t)},s)\leq
    \frac{(p-1)^{1/2}}{4}\cdot \left(t+\frac{\ell-1}{\ell+1}\right)\cdot\ell^{-t/2}. 
    \]
    We have
    \[
    s(X) = \frac{12}{p-1}\sum_{E\in X} w_E^{-1}. 
    \]
    We see that if 
    \[
    t/2-\log_{\ell}\left(t+\frac{\ell-1}{\ell+1}\right) \geq \log_{\ell}\left(\frac{(p-1)^{3/2}}{24\sum_{E\in X}w_E^{-1}}\right),
    \]
    then 
    \[
    \frac{(p-1)^{1/2}}{4}\cdot \left(t+\frac{\ell-1}{\ell+1}\right)\cdot\ell^{-t/2} \leq \frac{6}{p-1}\sum_{E\in X}w_E^{-1},
    \]
    so 
    \[
    v^{(t)}(X) \geq \frac{6}{p-1}\sum_{E\in X}w_E^{-1},
    \]
    as desired. 
\end{proof}

We now bound the expected number of random walks beginning at $E$ which we take before finding a $(d,\epsilon)$-structure. Let $\llog x$ denote $\log\log x$.

\begin{prop}[GRH]\label{prop:problowerbound}
Assume GRH. Let $p>3$ be a prime, and let $E_0$ be a supersingular elliptic curve defined over $\FF_{p^2}$ such that $\#E_0(\FF_{p^2})=(p+\epsilon)^2$ for $\epsilon=\pm 1$. Let $\ell\not=p$ be a prime, and let $d=1$ or $2$. Let $X$ be a complete set of representatives for the collection of isomorphism classes of supersingular elliptic curves with a $(d,\epsilon)$-structure. If 
\[
t/2-\log_{\ell}\left(t+\frac{\ell-1}{\ell+1}\right)\geq \log_{\ell}\left(
\frac{(p-1)^{3/2}}{8}\right), 
\]
then a non-backtracking walk in $G(p,\ell)$ beginning at $E_0$  of length $t$  lands in $X$ with probability at least $\Omega\left(\frac{1}{\sqrt{p}\llog p}\right)$.  
\end{prop}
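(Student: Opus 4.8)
The plan is to reduce the estimate to a count of the number of supersingular curves possessing a $(d,\epsilon)$-structure and then feed that count into Proposition~\ref{prop:nbtmixing}. Concretely, if $X$ is a complete set of representatives for the isomorphism classes of supersingular curves with a $(d,\epsilon)$-structure, then by Proposition~\ref{prop:nbtmixing} a non-backtracking walk of length $t$ lands in $X$ with probability at least $\frac{6}{p-1}\sum_{E\in X}w_E^{-1}$ provided $t$ satisfies the length hypothesis with $\sum_{E\in X} w_E^{-1}$ in the denominator; since $\sum_{E\in X} w_E^{-1}\geq \#X / 3$ (as $w_E\in\{1,2,3\}$ always, with $w_E=1$ for all but at most two curves), it suffices to show $\#X = \Omega\!\left(\sqrt p / \llog p\right)$, and then the simplified length hypothesis $t/2-\log_\ell(t+\tfrac{\ell-1}{\ell+1})\geq \log_\ell\!\big((p-1)^{3/2}/8\big)$ implies the hypothesis of Proposition~\ref{prop:nbtmixing}.

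So the real content is the lower bound $\#X = \Omega\!\left(\sqrt p/\llog p\right)$. First I would translate the condition ``$E$ has a $(d,\epsilon)$-structure'' into an embedding condition on $\End(E)$: by Lemma~1 of~\cite{CS21}, together with Proposition~\ref{prop:associated_trace}, $E$ has a $(d,\epsilon)$-structure precisely when $\End(E)$ contains a copy of $\ZZ[\sqrt{-dp}]$ and $\pi_E = -\epsilon p$; the sign condition $\pi_E=-\epsilon p$ is exactly the normalization $\#E_0(\FF_{p^2})=(p+\epsilon)^2$, which is a Galois-stable condition and so holds for the whole walk. Thus counting $\#X$ amounts to counting (with the correct $\epsilon$) supersingular $j$-invariants $E$ over $\overline{\FF_p}$ such that $\ZZ[\sqrt{-dp}]\hookrightarrow \End(E)$. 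This is a classical optimal-embedding / class-number computation: by Deuring's correspondence and the trace formula for embedding numbers (Eichler), the number of such curves is governed by the class number $h(-4dp)$ (or a Hurwitz class number, plus local embedding factors at $\ell\mid 2dp$ which are $O(1)$). Hence $\#X \gg h(-4dp)/(\text{bounded local factors})$.

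The next step is the analytic input. By the class number formula $h(-4dp) = \frac{\sqrt{4dp}}{\pi} L(1,\chi_{-4dp})$ up to the bounded factor $w/2$, so a lower bound $L(1,\chi_{-4dp})\gg 1/\llog(dp)$ yields $h(-4dp)\gg \sqrt{p}/\llog p$ (absorbing the constant $\sqrt{4d}$, $d\le 2$). The bound $L(1,\chi)\gg 1/\log\log(\mathrm{disc})$ is exactly where GRH enters: under GRH one has the classical Littlewood-type estimate $L(1,\chi) \gg 1/\log\log|\mathrm{disc}(\chi)|$. This is the crux of the argument and the only place the Riemann Hypothesis is used; everything else is elementary counting.

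The main obstacle I anticipate is bookkeeping rather than depth: one must be careful that the optimal-embedding count really does give a \emph{lower} bound of the claimed order and not something smaller after accounting for (i) the split between the two values of $\epsilon$ — one must check that the $\epsilon$ dictated by $E_0$ is attained by a positive proportion of the embeddings, which follows because the curves over $\FF_p$ (the spine) realize both traces and the curves with a genuine degree-$d$ isogeny to their conjugate carry a definite sign; (ii) the local embedding numbers at primes dividing $2dp$, which must be shown to be bounded below by a positive constant (they are, since $-dp$ is chosen so that $\ZZ[\sqrt{-dp}]$ is the maximal order away from $2$, and there are only $O(1)$ ramified primes); and (iii) the mild discrepancy between $X$ as ``curves with a $(d,\epsilon)$-structure'' versus ``curves with an embedding of $\ZZ[\sqrt{-dp}]$'' — Lemma~1 of~\cite{CS21} shows these coincide, so no loss. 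Once these are pinned down, combining $\#X\gg\sqrt p/\llog p$ with Proposition~\ref{prop:nbtmixing} gives the stated probability $\Omega\!\left(\tfrac{1}{\sqrt p\,\llog p}\right)$ and closes the proof.
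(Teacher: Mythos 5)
Your proposal follows the same skeleton as the paper's proof: verify that the simplified length hypothesis implies the hypothesis of Proposition~\ref{prop:nbtmixing} via the trivial bound $\sum_{E\in X}w_E^{-1}\geq \#X/3\geq 1/3$, reduce everything to the lower bound $\#X=\Omega(\sqrt{p}/\llog p)$, and obtain that bound from a class number estimate that under GRH is $\Omega(\sqrt{dp}/\llog(dp))$ by Littlewood~\cite{Lit28}. The one place you genuinely diverge is the justification of $\#X\gg h(-4dp)$: you propose to run the classical Deuring/Eichler optimal-embedding machinery (trace formula for embedding numbers, local factors at primes dividing $2dp$, splitting between the two values of $\epsilon$), whereas the paper short-circuits all of this by citing Corollary~1 of~\cite{CS21}, which directly states that the number of $(d,\epsilon)$-structures up to $\FF_{p^2}$-isomorphism is at least $h_{\QQ(\sqrt{-dp})}$; the paper then only needs the cheap observation that each $j$-invariant carries at most $6(d+1)$ such structures to pass from structures to vertices of $G(p,\ell)$. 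Your route is more self-contained but carries exactly the bookkeeping burdens you flag — in particular point (i), confirming that the embeddings are not all concentrated on the wrong sign of $\epsilon$, is the part your sketch leaves least resolved and is precisely what the citation to~\cite{CS21} buys the paper for free. Two small points in your favor: your remark that $\#E_0(\FF_{p^2})=(p+\epsilon)^2$ is isogeny-invariant, so the entire walk stays compatible with the fixed $\epsilon$, is a detail the paper leaves implicit; and your explicit use of $w_E\in\{1,2,3\}$ is equivalent to the paper's use of $\#X\geq 1$ for checking the mixing hypothesis.
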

\begin{proof}
The set $X$ is nonempty because~\cite[Corollary 1]{CS21} implies that the number of isomorphism classes of $(d,\epsilon)$-structures is at least the class number of $\ZZ[\sqrt{-dp}]$. 
Since $X$ is nonempty, we may apply the trivial lower bound $\#X\geq 1$ to get 
 \[
 \sum_{E\in X}\frac{2}{\#\Aut(E)}\geq 1/3.
 \]
For $t$ satisfying the hypothesis in the proposition, we  conclude 
 \begin{align*}
t/2-\log_{\ell}\left(t+\frac{\ell-1}{\ell+1}\right)&\geq \log_{\ell}\left(
\frac{(p-1)^{3/2}}{8}\right)  \\ 
&\geq \log_{\ell}\left(
\frac{(p-1)^{3/2}}{24\sum_{E\in X}\frac{2}{\#\Aut(E)}}\right).
\end{align*}
By Proposition~\ref{prop:nbtmixing}, a non-backtracking walk beginning at $E_0$ lands in $X$ with probability at least
\[
\frac{6}{p-1}\cdot \sum_{E\in X} \frac{2}{\#\Aut(E)}\geq \frac{6}{p-1}(\#X-7/6).
\]
 
 Let $K=\QQ(\sqrt{-pd})$. By~\cite[Corollary 1]{CS21}, there are at least $h_{K}$ many  $(d,\epsilon)$-structures, up to $\mathbb{F}_{p^2}$-isomorphism. Since any given $E$ has at most $d+1$ $d$-isogenies, and since the number of distinct $\FF_{p^2}$-isomorphism classes of curves with the same $j$-invariant is at most $6$,  we have that 
\[
\#X \geq \frac{1}{6(d+1)}h_{K}. 
\]
Assuming the Generalized Riemann Hypothesis, 
\[
h_{K} = \Omega(\sqrt{pd}/\llog(pd))=\Omega(\sqrt{p}/\llog(p))
\]
by \cite[Theorem 1]{Lit28}. We conclude that a non-backtracking walk of length $t$ lands in $X$ with probability $\Omega\left((\sqrt{p}\llog(p))^{-1}\right)$. 
\end{proof}

\begin{rmk}
    We required a lower bound on the class group of an imaginary quadratic order in two places in the proof of Proposition~\ref{prop:problowerbound}: once to determine the length $t$ of a walk to guarantee good mixing, and once to extract a lower bound on the probability that a random walk ends in a given set. Since our later algorithms require an {\em effective} upper bound on $t$, we need effective lower bounds on the class group for the first part of the argument. One could use non-trivial effective lower bounds, but these would only yield sub-logarithmic improvements to the size of $t$. For simplicity we just use $1$. In the second part of the argument, we do not need an effective lower bound, since the constant is hidden in the big-$\Omega$. 
\end{rmk}

Next, we show that if $d<p/4$, a curve $E$ has a $(d,\epsilon)$-structure if and only if $E$ is $d$-isogenous to $E^{(p)}$. This holds  if and only if 
$\Phi_d(j(E),j(E)^p)=0$, giving us an efficient method for testing whether 
$E$ has a $(d,\epsilon)$-structure. The following lemma is an adaptation of~\cite[Lemma 6]{CGL2009}, and we include a proof for convenience.
\begin{lem}\label{lem:check_deps}
Let $E$ be a supersingular elliptic curve over
$\mathbb{F}_{p^2}$, and let $1<d<p/4$ be square-free and coprime to $p$. Then $E$ has a $(d,\epsilon)$-structure $(E,\psi)$ for $\epsilon\in \{\pm1\}$
if and only if $E$ is $d$-isogenous to $E^{(p)}$.
\end{lem}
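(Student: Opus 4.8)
The plan is to dispatch the forward implication immediately and to reduce the reverse implication to an embedding of an imaginary quadratic order into $\End(E)$. If $(E,\psi)$ is a $(d,\epsilon)$-structure, then $\psi\colon E\to E^{(p)}$ is by definition a degree-$d$ isogeny, so $E$ is $d$-isogenous to $E^{(p)}$; this is one direction. For the converse, suppose $\psi\colon E\to E^{(p)}$ is any isogeny of degree $d$ and put $\alpha\coloneqq\pi\psi\in\End(E)$, its associated inseparable endomorphism. Then $\Nrd(\alpha)=\deg\alpha=dp$, and the heart of the matter is to show $\Trd(\alpha)=0$. Granting this, $\alpha^2=-dp$, so $\mathbb Z[\alpha]\cong\mathbb Z[\sqrt{-dp}]$ (note $\alpha\notin\mathbb Z$ since $dp>0$) is a subring of $\End(E)$, and then \cite[Lemma~1]{CS21} produces a $(d,\epsilon)$-structure on $E$ for some $\epsilon\in\{\pm1\}$ --- exactly the mechanism used at the end of the proof of Lemma~\ref{lem:reflect cyclic}.

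To prove $\Trd(\alpha)=0$ I would combine a divisibility constraint with a size constraint, the latter being where the hypothesis $d<p/4$ enters. For the divisibility constraint: $\alpha=\pi\psi$ is inseparable, and so is its dual $\widehat\alpha$, since on a supersingular curve an endomorphism is inseparable exactly when $p$ divides its degree and $\deg\widehat\alpha=\deg\alpha=dp$. Hence $\alpha$ and $\widehat\alpha$ both lie in the two-sided ideal $P$ of inseparable endomorphisms, so $\Trd(\alpha)=\alpha+\widehat\alpha\in\mathbb Z\cap P=p\mathbb Z$ (the equality $\mathbb Z\cap P=p\mathbb Z$ is recorded in the proof of Proposition~\ref{prop:z+p}). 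For the size constraint: $\End(E)$ is an order in the definite quaternion algebra $B_{p,\infty}$, so $\Nrd$ is positive definite and the characteristic polynomial $x^2-\Trd(\alpha)x+\Nrd(\alpha)$ of $\alpha$ has non-positive discriminant, giving $\Trd(\alpha)^2\le 4\Nrd(\alpha)=4dp<p^2$. A multiple of $p$ of absolute value less than $p$ must vanish, so $\Trd(\alpha)=0$.

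I expect the only delicate point to be the claim that inseparability forces $p\mid\Trd(\alpha)$ --- equivalently, that $P$ is stable under the standard involution and meets $\mathbb Z$ in $p\mathbb Z$. If one prefers to avoid appealing to the ideal structure of $P$, one can instead mimic the computation in the proof of Proposition~\ref{prop:associated_trace}: Lemma~\ref{lem:phip_props} lets one rewrite $\alpha^2=\pi\psi\pi\psi$ in terms of $\psi^{(p)}\psi$ and the Frobenius endomorphism, and the same ``$p\mid\Trd(\alpha)$ yet $\Trd(\alpha)^2<p^2$'' dichotomy then forces $\Trd(\alpha)=0$. A third option is to argue locally in $\End(E)\otimes\mathbb Z_p$, the maximal order of the quaternion division algebra over $\mathbb Q_p$, where $P\otimes\mathbb Z_p$ is the maximal ideal and every element of it visibly has reduced trace in $p\mathbb Z_p$. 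Any of these routes closes the gap, and the rest of the argument is formal.
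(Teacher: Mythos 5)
Your proof is correct, and its skeleton is the same as the paper's: the forward direction is definitional, and the converse reduces to showing $\Trd(\pi\psi)=0$ by combining the divisibility $p\mid\Trd(\pi\psi)$ with the positive-definiteness bound $\Trd(\pi\psi)^2\leq 4dp<p^2$ (exactly where $d<p/4$ enters in both arguments), after which \cite[Lemma 1]{CS21} upgrades the embedding $\ZZ[\sqrt{-dp}]\hookrightarrow\End(E)$ to a $(d,\epsilon)$-structure. The only divergence is in how the divisibility is obtained. The paper notes that $\ZZ[\mu]$ is an imaginary quadratic order in which $p$ cannot split because $E$ is supersingular, so $x^2-(\Trd\mu)x+dp\equiv x(x-\Trd\mu)\pmod{p}$ cannot have distinct roots modulo $p$; you instead place both $\alpha=\pi\psi$ and $\widehat{\alpha}$ in the two-sided ideal $P$ of inseparable endomorphisms and invoke $\ZZ\cap P=p\ZZ$ from the proof of Proposition~\ref{prop:z+p}. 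Both mechanisms are valid expressions of the same underlying fact about quadratic suborders of a maximal order in $B_{p,\infty}$. The one point in your route that deserves a word of justification, since the paper never states it, is that $\widehat{\alpha}$ is again inseparable; this is true (e.g.\ because $\widehat{\pi}$ is inseparable when $E$ is supersingular, so $\widehat{\alpha}=\widehat{\psi}\,\widehat{\pi}$ has nontrivial inseparable degree, or because $P\otimes\ZZ_p$ is the maximal ideal of the local division order, which consists precisely of the elements with $p\mid\Nrd$), and you correctly flag it as the delicate point and offer the paper's own argument as a fallback.
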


\begin{proof}
    If $(E,\psi)$ is a $(d,\epsilon)$-structure, then $\psi\colon E\to E^{(p)}$ is a $d$-isogeny. Assume now that $E$ is $d$-isogenous to $E^{(p)}$, and let $\psi\colon E\to E^{(p)}$ be a $d$-isogeny. 
    We show that the characteristic polynomial of  $\mu=\pi\psi$ is $x^2+dp$: if this holds, then 
    we have an embedding $\ZZ[\sqrt{-dp}] \hookrightarrow \End(E)$ defined by sending $\sqrt{-dp}$ to $\mu=\pi\psi$, and \cite[Lemma 1]{CS21} then implies that $(E,\psi)$ is a $(d,\epsilon)$-structure. Since the degree of $\mu$ is 
    $(\deg\pi)(\deg\psi)=pd$, we only need to show that $\Trd\mu$ is zero. The ring $\ZZ[\mu]$ must be an imaginary quadratic order since 
    $\deg\mu$ is not a square, and $p$ does split in $\ZZ[\mu]$ since $E$ is supersingular. Thus the characteristic polynomial \[x^2-(\Trd\mu )x+pd\equiv x(x-\Trd\mu)\pmod{p}\]
    of $\mu$ cannot have distinct roots modulo $p$, so we must have $\Trd\mu\equiv 0 \pmod{p}$. Since the discriminant of $\mu$ is negative and using our assumption that $d<p/4$, we have  
    \[
    |\Trd\mu|<2\sqrt{pd}<p.
    \]
Thus $\Trd\mu=0$. 
\end{proof}

We now introduce our algorithm for computing inseparable reflections, Algorithm~\ref{alg:insependo}. 
\begin{algorithm}\label{alg:insependo}
\KwIn{A supersingular elliptic curve $E/\FF_{p^2}$ with $p>3$, a prime $\ell$, and an integer $d$, with $d<p/4$ square-free and coprime to $\ell$.}
\KwOut{An inseparable reflection $\alpha=\pi\widehat{\phi^{(p)}}\psi\phi\in \End(E)$ where $\phi\colon E\to E'$ is an $\ell^k$-isogeny (represented by a sequence of $\ell$-isogenies)  and $\psi\colon E'\to E'^{(p)}$ is a $d$-isogeny  such that $(E',\psi)$ is a $(d,\epsilon)$-structure.
}
\caption{Compute an inseparable reflection}
Compute the least integer $t$ such that $t/2-\log_{\ell}\left(t+\frac{\ell-1}{\ell+1}\right)\geq \log_{\ell}\left(
\frac{(p-1)^{3/2}}{8}
\right)$\label{step:walklength}\;
\Repeat{$E_t$ is $d$-isogenous to $E_t^{(p)}$}{
Compute a random, non-backtracking walk $W=\{\phi_1\colon E\to E_1,\ldots,\phi_t\colon E_{t-1}\to E_t\}$ in $G(p,\ell)$ of length $t$\label{step:nbtwalk}\;
}
Let $k=\min_{1\leq i\leq t}\{i:E_i \text{ is }d\text{-isogenous to } E_i^{(p)}\}$\;
Compute a $(d,\epsilon)$-structure $(E_k,\psi)$\;
\Return{\{$\phi_1,\ldots,\phi_k,\psi,\widehat{\phi_k}^{(p)},\ldots,\widehat{\phi_1}^{(p)},\pi\}$}
\end{algorithm}
We only need to run Algorithm \ref{alg:insependo} on inputs of the form $(E,\ell,2)$ (to compute Bass orders) and inputs $(E,\ell,1)$ (for our  heuristic algorithm described in Section \ref{sec:heuristic}), where $\ell$ is a fixed small prime, such as $2,3,$ or $5$. Thus in our complexity analysis below, we are treating $\ell$ and $d$ as constants. Similar results hold for square-free $d= O(\log p)$  and prime $\ell= O(\log p)$. 
Let $\M(n)$ denote the cost of multiplying two $n$-bit integers (we may take $\M(n)=O(n\log n)$ by~\cite{HarveyH21}).  Below, we analyze the complexity of Algorithm~\ref{alg:insependo}. 

\begin{prop}[GRH]\label{prop:specialendoruntime}

Algorithm~\ref{alg:insependo} is correct. Assuming GRH, for any   prime $\ell\in\{2,3,5\}$, integer $d\in \{1,2\}$, prime $p>4d$, and supersingular elliptic curve $E$ defined over $\FF_{p^2}$, Algorithm~\ref{alg:insependo} on input $(E,\ell,d)$  terminates in expected $O(p^{1/2}(\log p)^2(\llog p)^3)$ bit operations. 
\end{prop}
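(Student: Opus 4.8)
The plan is to prove correctness first, then bound the expected running time by combining the success probability from Proposition~\ref{prop:problowerbound} with the per-iteration cost estimated via the field arithmetic complexities from Section~\ref{subsec:complexities}. For correctness, I would argue as follows. By Lemma~\ref{lem:check_deps}, since $d<p/4$ is square-free and coprime to $p$, a curve $E_i$ in the walk has a $(d,\epsilon)$-structure if and only if $E_i$ is $d$-isogenous to $E_i^{(p)}$, which is checked by testing $\Phi_d(j(E_i),j(E_i)^p)=0$; so the loop terminates exactly when some $E_i$ admits a $(d,\epsilon)$-structure, and the minimal such index $k$ is well-defined. The walk $W$ is non-backtracking, hence $\phi=\phi_k\cdots\phi_1\colon E\to E_k$ is a cyclic $\ell^k$-isogeny. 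By minimality of $k$, no curve $E_i$ with $i<k$ has a $(d,\epsilon)$-structure, which is precisely the non-factorization condition in Definition~\ref{def:insepref} (any nontrivial factorization of a cyclic $\ell^k$-isogeny passes through one of the intermediate $E_i$). Therefore $\alpha=\pi\widehat{\phi^{(p)}}\psi\phi$ is an inseparable reflection of degree $\ell^{2k}dp$ in the sense of Definition~\ref{def:insepref}, and the returned sequence of isogenies represents exactly this endomorphism (using $\widehat{\phi_i}^{(p)}=\widehat{\phi_i^{(p)}}$ from Lemma~\ref{lem:phip_props}(\ref{pdualp})). One subtlety to address: the output promises $\psi$ is an honest $d$-isogeny realizing the $(d,\epsilon)$-structure, and for $d=1$ the ``isogeny'' $\psi$ is an isomorphism $E_k\to E_k^{(p)}$, i.e. $E_k$ is defined over $\FF_p$ up to isomorphism; I would note this case explicitly.

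Next, the running time. The walk length $t$ computed in Step~\ref{step:walklength} satisfies $t=O(\log p)$: the defining inequality $t/2-\log_\ell(t+\tfrac{\ell-1}{\ell+1})\geq \log_\ell\big(\tfrac{(p-1)^{3/2}}{8}\big)$ has right-hand side $\Theta(\log p)$ and left-hand side $t/2-O(\log t)$, so the least such $t$ is $\tfrac{3}{\ln\ell}\ln p + O(\log\log p)$, and it can be found by a trivial search. For a single iteration of the \texttt{Repeat} loop: each of the $t$ steps of the non-backtracking walk requires computing the roots of $\Phi_\ell(j(E_{i-1}),Y)\in\FF_{p^2}[Y]$ (degree $\ell+1$, constant for fixed $\ell$) and selecting one avoiding backtracking; by the root-finding complexity in Section~\ref{subsec:complexities} with $d=O(1)$ this costs $O(\log p(\llog p)^2)$ bit operations per step (dominated by the cost of evaluating the modular polynomial and of the field operations), plus the cost of a $p$-power Frobenius $j\mapsto j^p$ in $\FF_{p^2}$, which is $O(\log p\,\M(\log p))=O((\log p)^2\llog p)$ by repeated squaring; over $t=O(\log p)$ steps this is $O((\log p)^3\llog p)$. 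Then for each of the $t$ intermediate curves we evaluate $\Phi_d(j(E_i),j(E_i)^p)$ to test for a $(d,\epsilon)$-structure; with $d=O(1)$ each test costs $O((\log p)^2\llog p)$ (again dominated by the Frobenius $j\mapsto j^p$), so $O((\log p)^3\llog p)$ total for the $t$ tests. Hence one iteration of the loop costs $O((\log p)^3\llog p)$ bit operations.

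By Proposition~\ref{prop:problowerbound}, since $\#E(\FF_{p^2})=(p\pm1)^2$ for our choice of model and $t$ satisfies exactly the hypothesis of that proposition, each iteration independently lands in the set $X$ of curves with a $(d,\epsilon)$-structure with probability $\Omega\big(\tfrac{1}{\sqrt p\,\llog p}\big)$. Therefore the expected number of iterations is $O(\sqrt p\,\llog p)$, and the expected total cost of the \texttt{Repeat} loop is
\[
O(\sqrt p\,\llog p)\cdot O((\log p)^3\llog p)=O\big(p^{1/2}(\log p)^3(\llog p)^2\big).
\]
Finally, once the loop exits, computing the $(d,\epsilon)$-structure $(E_k,\psi)$ amounts to computing a $d$-isogeny $E_k\to E_k^{(p)}$ from a common root of $\Phi_d$, which for $d\in\{1,2\}$ is a constant-degree V\'elu-type computation costing $\mathrm{poly}(\log p)$; this is absorbed. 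The bottleneck term is the loop, giving the stated bound (I would note the paper's $O(p^{1/2}(\log p)^2(\llog p)^3)$ reflects a slightly sharper accounting of the per-step cost, e.g. amortizing the Frobenius computation or using a single precomputed Frobenius matrix on $\FF_{p^2}$, reducing the per-iteration cost to $O((\log p)^2(\llog p)^3)$; I would point to Section~\ref{subsec:complexities} for the finer estimate on the inversion and multiplication that makes this precise).

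The main obstacle I expect is the bookkeeping in the per-iteration cost: getting the exponents on $\log p$ and $\llog p$ to match the claimed $O((\log p)^2(\llog p)^3)$ per iteration (rather than my cruder $O((\log p)^3\llog p)$) requires being careful about how the $O(\log p)$ steps of the walk share the cost of $\FF_{p^2}$-arithmetic and Frobenius evaluation, and whether the modular-polynomial evaluation or the root-finding dominates; the probabilistic and correctness parts are comparatively routine given Proposition~\ref{prop:problowerbound} and Lemma~\ref{lem:check_deps}.
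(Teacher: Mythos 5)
Your overall structure---correctness via cyclicity of non-backtracking walks plus minimality of $k$ for the non-factorization condition of Definition~\ref{def:insepref}, then runtime as (per-iteration cost) $\times$ (expected number of iterations from Proposition~\ref{prop:problowerbound})---is exactly the paper's argument, and the correctness half is complete as you wrote it. The issue is in the runtime bookkeeping: the bound you actually establish, $O(p^{1/2}(\log p)^3(\llog p)^2)$, is weaker than the stated $O(p^{1/2}(\log p)^2(\llog p)^3)$ by a factor of $\log p/\llog p$, so as written you have not proven the proposition.

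The source of the extra factor is concrete. You charge a Frobenius computation $j\mapsto j^p$ and a $(d,\epsilon)$-structure test to \emph{every} vertex of \emph{every} walk inside the repeat loop. But the loop condition in Algorithm~\ref{alg:insependo} only tests the terminal vertex $E_t$; the scan for the minimal index $k$ (which does test all $t$ intermediate vertices) happens exactly once, after the loop exits, so its cost is not multiplied by the $O(\sqrt{p}\,\llog p)$ iteration count and is absorbed. With that correction, each iteration consists of $t=O(\log p)$ root-finding steps plus a single subfield test. The paper takes the per-step cost to be the $O(\M(\ell\log p)\,\llog p)$ bit operations for computing a random root of the constant-degree polynomial $\Phi_\ell(j_i,Y)$ via Rabin's algorithm, giving $O(\M(\log p)(\log p)(\llog p))=O((\log p)^2(\llog p)^2)$ per iteration; the one terminal test per iteration costs $O(\log p)$ multiplications in $\FF_{p^2}$, i.e.\ $O((\log p)^2\llog p)$, and is dominated. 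Multiplying by the $O(\sqrt{p}\,\llog p)$ expected iterations gives the stated $O(p^{1/2}(\log p)^2(\llog p)^3)$. So the fix is not a cleverer Frobenius or amortization scheme as you speculate, but simply not performing the intermediate-vertex tests inside the loop. (Your instinct that the per-step root-finding itself hides a modular exponentiation $Y^{p^2}\bmod\Phi_\ell(j_i,Y)$ costing $\Theta(\log p)$ multiplications is worth flagging as a separate concern about the cited per-step bound, but it is orthogonal to the overcounting above, which is what produces your weaker exponent.)
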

\begin{proof}
First, we argue that Algorithm~\ref{alg:insependo} is correct. Let $\alpha=\pi\widehat{\phi^{(p)}}\psi\phi$  with $\deg\phi=\ell^{k}$  be the output of Algorithm~\ref{alg:insependo}  on input $(E,\ell,d)$. We claim that $\alpha$ satisfies the hypotheses of Theorem~\ref{thm:genorder}. Because Algorithm~\ref{alg:insependo} uses non-backtracking walks, the $\ell^k$-isogeny $\phi$ is cyclic. Because the walk is truncated so that the  final vertex is the first curve in the walk with a $(d,\epsilon)$-structure, $\phi$ does not factor nontrivially through an isogeny to another curve with a $(d,\epsilon)$  structure. We conclude that $\alpha$ is an inseparable reflection.

We now bound the expected number of bit operations performed by the algorithm. 
    We can do Step~\ref{step:walklength} with Newton's method, for example, and the magnitude of the solution $t$ will be in $O(\log p)$. 
    We compute $\Phi_{\ell}$ and $\Phi_2$, if $d=2$, and store these polynomials. Since we treat $\ell$ and $d$ as constants, we ignore these costs, and in any case this computation can be done in $O(\ell^3\log^3\ell\llog\ell)$ expected time assuming GRH~\cite[Theorem 1]{BLS}. We can take one step in $G(p,\ell)$ using the modular polynomial $\Phi_{\ell}$. Let $E_0=E$ and $j_0=j(E_0)$. Suppose we are at vertex $j_i$. The neighbors of $j_i$ are the roots of $\Phi_{\ell}(j_i,Y)$. We can evaluate $\Phi_{\ell}(X,Y)$ at $(j_i,Y)$ in $O(\ell^2)=O(1)$ many multiplications and additions in $\FF_{p^2}$, the cost of which is dominated by the $O(\M(\ell \log p)(\llog p))$ bit operations needed to compute a random root of $\Phi_{\ell}(j_i,Y)$  using the randomized algorithm of~\cite{Rabin80}. To take a non-backtracking step, we compute a random root of $\Phi_{\ell}(j_i,Y)/(Y-j_{i-1})$ where $j_{i-1}$ is the previous vertex of the walk. Let $X$ denote the set of supersingular $j$-invariants in $\FF_{p^2}$ which are $d$-isogenous to their Galois conjugate. By Lemma~\ref{lem:check_deps}, we can test if $j_t$ is in $X$ by testing whether $\Phi_d(j_t,j_t^p)=0$ when $d>1$ and simply whether $j_t^p=j_t$ when $d=1$, both of which we can do with $O(\log p)$ multiplications in $\FF_{p^2}$.  Since the length of the walk is $O(\log p)$, and since we treat $\ell$ as a constant, Step~\ref{step:nbtwalk} takes $O(\M(\log p)(\log p)(\llog p))$ time.

    We now calculate the expected number of iterations of Step~\ref{step:nbtwalk}. Assuming GRH, by Proposition~\ref{prop:problowerbound}  a non-backtracking walk beginning at $E$ lands in $X$ with probability $\Omega\left(\frac{1}{\sqrt{p}\llog p}\right)$. Thus the expected number of non-backtracking walks we must take is $O(\sqrt{p}\llog p)$. Multiplying the expected number of walks by 
    the expected number of bit operations per walk and using $\M(n)=O(n\log n)$ by~\cite{HarveyH21} yields the cost 
    \[
    O(\M(\log p)(\log p)(\llog p) \cdot \sqrt{p}(\llog p)) = O(p^{1/2}(\log p)^2 (\llog p)^3).
    \]
    
    Let $j_0=j(E),j_1,\ldots,j_t$ be a sequence of adjacent $j$-invariants in $G(p,\ell)$ with $j_t\in X$. We next obtain the sequence of isogenies $\phi_i\colon E_i\to E_{i+1}$ for $i=0,\ldots,t-1$ and the $(d,\epsilon)$-structure $(E_t,\psi)$ with $O((\log p)^{O(1)})$ bit operations: for example, if $p$ is sufficiently large and if $j_{i+1}$ is a simple root of $\Phi_{\ell}(j_i,Y)$, given an equation for $E_i$ with $j$-invariant $j_i$, we can compute a short Weierstrass equation for an elliptic curve $E_{i+1}$ and a normalized $\ell$-isogeny $\phi_i\colon E_i\to E_{i+1}$ with $O(\ell^2)$ operations in $\FF_{p^2}$ using Elkies algorithm~\cite{Elk95} (see ~\cite[Algorithm 28]{GalPKC} for an explicit description of the algorithm). The time to compute the  sequence of isogenies associated to the path $j_0,\ldots,j_t$  is therefore dominated by the time required to complete the while-loop, since $\ell$ is a constant.  Similarly, the time required to truncate the path and to compute the $(d,\epsilon)$-structure is also dominated by the time required to complete the while-loop.
\end{proof}
\begin{rmk}
There are some natural optimizations which we do not explore here, such as testing more vertices along the path for the presence of $(d,\epsilon)$-structures,  or, more generally, testing whether a given curve $E_k$ along the path is $\ell'$-isogenous to a curve defined over $\FF_p$ with the algorithm of~\cite{CCS22}. 
\end{rmk}

\subsection{Computing a Bass suborder of \texorpdfstring{$\End(E)$}{EndE}}
In~\cite{EHLMP20}, the authors give a subexponential algorithm for computing a basis for  $\End(E)$ from a Bass suborder of $\End(E)$ but only give a heuristic algorithm for computing the Bass suborder.  
Theorems~\ref{thm:genorder} and~\ref{thm:bass_from_inseps} suggest the following approach to compute a Bass suborder of $\End(E)$: run Algorithm~\ref{alg:insependo} twice, first on the input $(E,3,2)$ and then on the input $(E,5,2)$, to produce two inseparable reflections 
$\alpha_i=\pi\widehat{\phi_i^{(p)}}\psi_i\phi_i$ of $E$ and the  Bass order $\Lambda_{\alpha_1\alpha_2}$ generated by $\alpha_1$ and $\alpha_2$.

\begin{rmk}
    The same heuristic the authors make in ~\cite{EHLMP20} to argue that the expectation of the number of calls to their algorithm for computing an endomorphism of a supersingular elliptic curve $E$ before producing a generating for a Bass order will be used in Section~\ref{sec:heuristic}  to argue that the expected number of calls to Algorithm~\ref{alg:insependo} before producing a generating set for $\ZZ+P$. This approach to computing a basis for $\End(E)$ is taken up in the next section.
\end{rmk}

\begin{algorithm}
\caption{Compute a Bass order contained in $\End(E)$}\label{alg:bass}
\KwIn{A supersingular elliptic curve $E/\FF_{p^2}$, two distinct primes $\ell_1,\ell_2\not=p$, and an integer $d$, with $d$ square-free, $d<p/4$, and $-dp\not\equiv 1 \pmod{4}$.}
\KwOut{A compact representation of a Bass order contained in $\End(E)$.}
 Use Algorithm~\ref{alg:insependo} twice, on input respectively $(E,\ell_1,d)$ and $(E,\ell_2,d)$, to compute two inseparable reflections $\alpha_1,\alpha_2$ of $E$\label{step:twoepsstructs}\;
\Return{$\Lambda=\langle 1,\alpha_1,\alpha_2,\alpha_1\alpha_2\rangle$}
\end{algorithm}

\begin{thm}[GRH]\label{thm:makebass}
Algorithm~\ref{alg:bass} is correct. Assuming GRH, if $p>8$ and $E$ is a supersingular elliptic curve over $\FF_{p^2}$ then on input $(E,3,5,2)$ Algorithm~\ref{alg:bass} terminates in expected $O(p^{1/2}(\log \log p)^2(\log\log p)^3)$ bit operations. 
\end{thm}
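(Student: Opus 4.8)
The plan is to establish correctness and then bound the running time, both reducing essentially to results already proved for Algorithm~\ref{alg:insependo}. For correctness, I would argue as follows. By Proposition~\ref{prop:specialendoruntime}, each of the two calls to Algorithm~\ref{alg:insependo} --- on inputs $(E,\ell_1,d)=(E,3,2)$ and $(E,\ell_2,d)=(E,5,2)$ --- correctly returns an inseparable reflection $\alpha_1$ (of degree $\ell_1^{2t_1}dp$) and $\alpha_2$ (of degree $\ell_2^{2t_2}dp$) of $E$. Now I must check the hypotheses of Proposition~\ref{prop:gorenstein} and Theorem~\ref{thm:bass_from_inseps}: we need $d_1=\ell_1^{t_1}$, $d_2=\ell_2^{t_2}$, and $d$ to be pairwise coprime, with $d$ square-free, and $-dp\not\equiv 1\pmod 4$. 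Here $d=2$ is square-free and coprime to $\ell_1=3$ and $\ell_2=5$; $d_1$ and $d_2$ are coprime since they are powers of the distinct primes $3$ and $5$. Finally $-dp=-2p$, which is even, hence $\equiv 0$ or $2\pmod 4$, so in particular $-dp\not\equiv 1\pmod 4$. Thus Theorem~\ref{thm:bass_from_inseps} applies and $\Lambda_{\alpha_1\alpha_2}=\langle 1,\alpha_1,\alpha_2,\alpha_1\alpha_2\rangle$ is a Bass suborder of $\End(E)$, as output. One small point to address: Proposition~\ref{prop:gorenstein} requires that $\Lambda_{\alpha_1\alpha_2}$ is genuinely rank $4$ (i.e.\ that $\alpha_1,\alpha_2$ do not commute); this follows from Theorem~\ref{thm:genorder} together with the observation that the kernels of $\phi_1$ and $\phi_2$ differ --- they are cyclic groups of coprime orders $\ell_1^{t_1}$ and $\ell_2^{t_2}$, hence cannot coincide (and if $t_1=0$ or $t_2=0$ one still has $\ker\phi_1\ne\ker\phi_2$ unless both are trivial, in which case $\alpha_1$ and $\alpha_2$ have degrees $2p$ and differ since... here one may need a brief separate remark, but generically $t_i\geq 1$ and the walk-length bound in Step~\ref{step:walklength} forces $t\geq 1$ for $p>8$).

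For the complexity claim, the work is dominated by the two invocations of Algorithm~\ref{alg:insependo}. Since $p>8>4d$ with $d=2$, and $\ell_1=3,\ell_2=5\in\{2,3,5\}$, Proposition~\ref{prop:specialendoruntime} gives that each call terminates in expected $O(p^{1/2}(\log p)^2(\llog p)^3)$ bit operations. Forming the product $\alpha_1\alpha_2$ and writing down a basis of $\Lambda_{\alpha_1\alpha_2}$ in the quaternion algebra $B_{p,\infty}$ is polynomial-overhead work: one needs to express each $\alpha_i$ and $\alpha_1\alpha_2$ in coordinates via the known relations $\alpha_i^2=-\ell_i^{2t_i}dp$ (Proposition~\ref{prop:trace_zero_insep}) and $\Trd\alpha_i=0$, plus the computation of $\Trd(\alpha_1\alpha_2)$ and $\deg(\alpha_1\alpha_2)$, which can be obtained from the Gram matrix displayed in Proposition~\ref{prop:gorenstein} once $\Trd\rho$ is known; computing $\Trd\rho$ (equivalently the reduced trace of $\widehat{\phi_1}\widehat{\psi_1}\phi_1^{(p)}\widehat{\phi_2^{(p)}}\psi_2\phi_2$) can be done, e.g., by evaluating the composite isogeny on enough torsion or via a Schoof-type computation, all in $\mathrm{polylog}(p)$ time. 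Hence the total expected cost is $O(p^{1/2}(\log p)^2(\llog p)^3)$ as stated (the theorem's exponent on the inner $\log\log p$ is $3$; the ``$(\log\log p)^2$'' factor written in the statement absorbs into this via $(\log p)^2(\llog p)^3$, matching Proposition~\ref{prop:specialendoruntime}).

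The main obstacle, and the step requiring the most care, is \emph{not} the asymptotic arithmetic but making precise the compact representation of $\Lambda_{\alpha_1\alpha_2}$ and verifying that one can extract a genuine $\ZZ$-basis of the order inside $B_{p,\infty}$ in polynomial time --- in particular, computing $\Trd\rho$. The outputs of Algorithm~\ref{alg:insependo} are sequences of small-degree isogenies (and $\pi$) of total degree $\ell_i^{2t_i}\cdot d\cdot p$, which is $O(p^{1+o(1)})$ since $t_i=O(\log p)$ and $\ell_i,d=O(1)$; so the composite endomorphism $\rho=-\alpha_1\alpha_2/p$ has degree $O(p^2)$. Its reduced trace can be computed modulo many small primes by evaluating on torsion points (lying in extensions of degree $\mathrm{polylog}(p)$) and combined by CRT, since $|\Trd\rho|\leq 2\deg(\rho)^{1/2}=O(p)$; alternatively one invokes the generalization of Schoof's algorithm cited elsewhere in the paper. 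Either way the cost is $\mathrm{polylog}(p)$ and does not disturb the $\widetilde O(p^{1/2})$ bound. I would state this carefully but defer the fully detailed isogeny-arithmetic bookkeeping, since it is routine given the tools already in the literature and cited in the paper.
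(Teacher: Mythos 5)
Your proposal is correct and follows essentially the same route as the paper: invoke Proposition~\ref{prop:specialendoruntime} for correctness and cost of the two calls, note that $\ker\phi_1\neq\ker\phi_2$ since $\ell_1\neq\ell_2$ so Theorem~\ref{thm:genorder} applies, and conclude via Theorem~\ref{thm:bass_from_inseps} (the congruence $-2p\not\equiv 1\pmod 4$ being automatic). The edge case you hedge on ($t_i=0$) cannot occur since Algorithm~\ref{alg:insependo} takes $k=\min_{1\le i\le t}$ so $\deg\phi_i=\ell_i^{k_i}>1$, and the bookkeeping for $\Trd\rho$ is not needed here because the algorithm outputs only a compact representation (that computation is deferred to Algorithm~\ref{alg:endo_ring}); you also correctly identify that the exponent in the stated bound is a typo for $(\log p)^2$.
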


\begin{proof}
By Proposition~\ref{prop:specialendoruntime}, the two endomorphisms constructed in Step~\ref{step:twoepsstructs} are inseparable reflections. Write $\alpha_i=\pi\widehat{\phi_i^{(p)}}\psi_i\phi_i$ where $\phi_i\colon E\to E_i$ is an isogeny of degree $\ell_i^{k_i}$. Since $\ell_1\not=\ell_2$, the  kernels of $\phi_1$ and $\phi_2$ are distinct. Therefore Theorem~\ref{thm:genorder} implies $\Lambda$ is an order in $\End(E)$, and Theorem~\ref{thm:bass_from_inseps} implies $\Lambda$ is Bass. Thus, Algorithm~\ref{alg:bass} is correct. By Proposition~\ref{prop:specialendoruntime}, Step~\ref{step:twoepsstructs} terminates in expected $O(p^{1/2}(\log p)^2(\log \log p)^3)$ time. 
\end{proof}

\section{Computing \texorpdfstring{$\End(E)$}{End(E)} from a Bass suborder}\label{sec:provable}
In this section, we combine Algorithm~\ref{alg:bass}  and the algorithms in \cite{EHLMP20,Wes22} to produce an algorithm for computing a basis for the endomorphism ring of a supersingular elliptic curve $E$ over $\FF_{p^2}$ in expected time $O(p^{1/2}(\log p)^2(\log\log p)^3)$, conditional only on GRH. We begin with a high-level overview of the approach in \cite{EHLMP20} for computing a basis for $\End(E)$.  First, compute a Bass suborder $\Lambda\subseteq\End(E)$. This can be done with Algorithm~\ref{alg:bass} in expected $O(p^{1/2}(\log p)^2(\log\log p)^3)$ bit operations conditional on GRH by Theorem~\ref{thm:makebass}. Next,  enumerate maximal orders $\OO\subseteq \End^0(E)$ containing the Bass order $\Lambda$ until $\OO\cong \End(E)$ using  algorithms from~\cite{EHLMP20}.  We can efficiently check whether a given maximal order $\OO$ is isomorphic to $\End(E)$ using  the algorithms and reductions in \cite{EHLMP,Wes22}. This approach results in Algorithm~\ref{alg:endo_ring}.  

In Theorem~\ref{thm:EndE}, we show that Algorithm~\ref{alg:endo_ring} correctly computes $\End(E)$, and, conditional only on GRH, terminates in expected $O(p^{1/2}(\log p)^2(\log\log p)^3)$ bit operations.

\begin{rmk}
We ask  $\Lambda$ to be Bass because, under this assumption, we can prove that the number of maximal overorders of $\Lambda$ is subexponential in the size of $\Lambda$. 
    In general, there exists an order of size polynomial in $\log p$ 
    contained in $\End(E)$ which is contained in $\Omega(p)$  many distinct, 
    pairwise non-isomorphic maximal orders. For example, choose 
    $e=O(\log p)$ such that every supersingular $E'$ defined over 
    $\FF_{p^2}$ is connected by a $2^d$ isogeny to $E$ for some $d\leq 
    e$, and consider the order $\ZZ+2^e\End(E)$. We  claim  that this order has size polynomial in $\log p$ and embeds into the endomorphism ring of each supersingular elliptic curve over $\FF_{p^2}$, and therefore into a representative of each conjugacy class of maximal orders in $\End^0(E)$. For any supersingular 
    $E'$ there is an ideal $I\subseteq \End(E)$ such that $\OO_R(I)
    \cong \End(E')$ and $\Nrd(I)=2^d$ for some $d\leq e$. Then $\ZZ+2^e\End(E) \subseteq \ZZ+I \subseteq
    \OO_R(I)$. Since $E'$ was arbitrary, we conclude that $\ZZ+2^e
    \End(E)$ is contained in a representative of each conjugacy class 
    of maximal orders, and there are $\Omega(p)$ many conjugacy classes. Despite being contained in an exponentially large number of maximal orders, there is a basis for $\ZZ+2^e\End(E)$ whose size is polynomial in $\log p$ since $e=O(\log p)$ and $\End(E)$ has a basis whose size is polynomial in $\log p$.  
\end{rmk}

\begin{rmk}
    In~\cite{ES24}, Eisentr\"ager and Scullard give an algorithm for computing $\End(E)$ in polynomial time given a suborder $\Lambda$ of $\End(E)$ of polynomial size and a factorization of $\disc\Lambda$. Running Algorithm~\ref{alg:bass} to compute a Bass order $\Lambda=\Lambda_{\alpha_1\alpha_2}$, factoring $|\disc(\alpha_1\alpha_2/p)|$, and then using Algorithm 8.1 of~\cite{ES24} yields a faster algorithm for computing a basis for $\End(E)$ than the one outlined here, but with the same (conditional GRH) run time of $O(p^{1/2}(\log p)^2(\log\log p)^3)$. 
\end{rmk}

\begin{algorithm}
\caption{Compute End(E)}\label{alg:endo_ring}
\KwIn{A supersingular elliptic curve $E/\FF_{p^2}$, where $p>8$.}
\KwOut{A maximal order $\OO\subseteq B_{p,\infty}$ isomorphic to $\End(E)$.}

 Run Algorithm~\ref{alg:bass} on input $(E,3,5,2)$ 
  to compute a Bass order $\Lambda$ contained in $\End(E)$\label{step:makebassorder}\;
 Compute $a,b\in \QQ^{\times}$ and an isomorphism $f\colon\Lambda\otimes \QQ\to H(a,b)$\label{step:isotoBpinf}\;
Enumerate the maximal orders $\OO\supseteq f(\Lambda)$ 
until $\OO\cong \End(E)$\label{step:enummax}\;
\Return{$\OO$}
\end{algorithm}

We require an efficient algorithm for testing whether a maximal order $\OO$ is isomorphic to $\End(E)$. To do this, we will make use of an efficient algorithm for computing a supersingular curve $E'$ such that $\End(E')\cong \OO$. If $j(E')\in\{j(E),j(E)^p\}$, then $\OO\cong \End(E)$. An algorithm for producing a curve with endomorphism ring isomorphic to a given maximal order appears in~\cite{EHLMP}, which is efficient conditional on heuristics including GRH. The work of Wesolowski~\cite{Wes22} allows one to remove the heuristic assumptions (except for  GRH). We state that such an efficient algorithm exists here for completeness, noting that this algorithm  is due to results and algorithms in~\cite{Brok09,GPS,EHLMP,Wes22,CKMZ22}. See~\cite{EPSV} for a discussion of an efficient implementation of an algorithm addressing this problem. 
\begin{lem}\label{lem:deuring}
    There is an algorithm which, on input a basis for a maximal order $\OO$ of a quaternion algebra $B$ over $\QQ$ ramified at $p,\infty$, outputs a supersingular elliptic curve $E$ defined over $\FF_{p^2}$ such that $\End(E)\cong \OO$. Conditional on GRH, the algorithm runs in time polynomial in the size of $\OO$.
\end{lem}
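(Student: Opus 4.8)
The plan is to prove the lemma by assembling the algorithm from four ingredients: a base curve whose endomorphism ring is explicitly known, a connecting-ideal computation, the KLPT algorithm, and the ideal-to-isogeny translation, relying on the rigorous (GRH-only) analyses of these subroutines in the cited works. The content of the proof is the verification that every intermediate object has size polynomial in $\log p$ and $\size(\OO)$, and that the one genuinely heuristic-sensitive step can be made unconditional under GRH.

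First I would fix, once and for all, a base supersingular elliptic curve $E_0/\FF_{p^2}$ together with an explicit $\ZZ$-basis of $\OO_0 \coloneqq \End(E_0)$, viewed inside a standard presentation of $B_{p,\infty}$. Such an $E_0$ is produced in time polynomial in $\log p$, conditional on GRH, by Br\"oker's algorithm~\cite{Brok09}: it returns a supersingular $j$-invariant with CM by a small-discriminant imaginary quadratic order, and the CM action lets one write down a basis of $\End(E_0)$ explicitly (via the Pizer/Ibukiyama-type description of maximal orders in $B_{p,\infty}$). On input a maximal order $\OO$ of $B = H(a,b)$, one first computes an explicit isomorphism from $B$ to this standard presentation of $B_{p,\infty}$ — a polynomial-time linear-algebra task (see~\cite{Voight}) — and transports $\OO$ accordingly; then one computes a connecting ideal $I$ with $\OO_L(I) = \OO_0$ and $\OO_R(I) = \OO$, which is again a $\ZZ$-lattice computation (products, intersections, Hermite normal form) of cost polynomial in $\size(\OO) + \size(\OO_0)$.

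Next I would run the KLPT algorithm~\cite{KLPT, EHLMP, Wes22} to replace $I$ by a right-equivalent ideal $J$ — so $\OO_R(J) = \OO_R(I) \cong \OO$ — whose reduced norm is $\ell^{e}$ for a fixed small prime $\ell$, or (in the powersmooth variant) a powersmooth integer $T$, with $e$, respectively $\log T$, in $O(\log p)$, and with a basis of size polynomial in $\log p$. From $J$ one extracts the kernel subgroup $E_0[J] = \{P \in E_0[\Nrd(J)] : \beta(P) = 0 \text{ for all } \beta \text{ in a basis of } J\}$ using the explicit action of $\End(E_0)$ on torsion; taking $\Nrd(J)$ powersmooth ensures that $E_0[\Nrd(J)]$ is defined over an extension $\FF_{p^{2k}}$ with $k = O(\log p)$, so the point arithmetic is polynomial-time. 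Chained V\'elu formulas applied to $E_0 \to E_0/E_0[J]$ then yield the codomain $E$, and the Deuring correspondence gives $\End(E) \cong \OO_R(J) \cong \OO$; a final step returns $E$ (equivalently its $j$-invariant) up to $\FF_{p^2}$-isomorphism. Assuming GRH, every step above runs in time polynomial in $\size(\OO)$.

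The main obstacle — and the reason GRH is invoked — is the rigorous analysis of the KLPT step together with the torsion-point arithmetic used to evaluate the isogeny. Classically, KLPT and ideal-to-isogeny translation were justified only under additional heuristics: on the distribution of values of norm forms, on the existence of enough smooth or powersmooth norms in the relevant arithmetic progressions, and on the availability of accessible torsion of the required order over a small-degree extension. The work of Wesolowski~\cite{Wes22}, building on~\cite{GPS, EHLMP, CKMZ22}, shows how to discharge precisely these heuristics conditional only on GRH, so the proof of the lemma reduces to citing these results, checking that the base-curve data from~\cite{Brok09} feeds correctly into them, and tracking that all intermediate quaternionic and geometric objects stay polynomial-size.
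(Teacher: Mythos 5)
Your proposal is correct and follows essentially the same route as the paper's proof: a base curve $E_0$ with known endomorphism ring, an explicit isomorphism of quaternion algebras, a connecting ideal, replacement by an equivalent ideal of powersmooth norm via Wesolowski's GRH-only analysis (Theorem 6.4 of~\cite{Wes22}), and the ideal-to-isogeny translation of~\cite{EHLMP}. The only cosmetic difference is your citation of~\cite{Voight} for the algebra isomorphism where the paper uses~\cite{CKMZ22}.
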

\begin{proof}
    This follows by combining algorithms and results in~\cite{Piz80, Brok09, KLPT, GPS, EHLMP,CKMZ22, Wes22}.  First, we compute a quaternion algebra $B_{p,\infty}$ ramified at $p$ and $\infty$, a supersingular elliptic curve $E_0$, and a maximal order $\OO_0$ in $B_{p,\infty}$ such that $\OO_0\cong \End(E_0)$ under some explicit isomorphism. This can be done in time polynomial in $\log p$, conditional on GRH~\cite[Proposition 3]{EHLMP}. Next, we compute an isomorphism $f\colon \OO\otimes\QQ\to B_{p,\infty}$ of quaternion algebras, which can be done  in time polynomial in $\log p$~\cite[Proposition 4.1]{CKMZ22}. 

    We now compute a supersingular elliptic curve $E$ such that $\End(E)\cong \OO$. First, we compute a connecting ideal $J$ between $\OO_0$ and $f(\OO)$. By Theorem 6.4 of~\cite{Wes22}, assuming GRH, we can, in expected polynomial time, compute an equivalent ideal $I$ to $J$ such that the norm of $I$ is $B$-powersmooth for some $B=O((\log p)^c)$ for some constant $c$, meaning that if $p^e$ exactly divides $\Nrd(I)$ then $p^e\leq B$. Since the norm of $I$ is $B$-powersmooth, we can efficiently compute the corresponding isogeny $\phi_I\colon E_0\to E$~\cite[Proposition 4]{EHLMP}. The codomain $E$ of $\phi_I$ is a curve whose endomorphism ring is isomorphic to $\OO$, since 
    \[
\End(E)\cong \OO_R(I)\cong \OO_R(J) = f(\OO)\cong \OO. 
\]
Thus the algorithm is correct. 
\end{proof}

We require a bound on the number of maximal orders containing a given Bass order $\Lambda$. Below we show that that we may bound this quantity in terms of the number of divisors of the reduced discriminant of the order. Assuming $\Lambda$ has size polynomial in $\log p$, this implies that the number of maximal orders containing $\Lambda$ grows at most subexponentially in $\log p$. 

\begin{lem}\label{lem:bass_maxorder_bound}
    Let $\Lambda\subseteq B_{p,\infty}$ be a Bass order. The number of 
    maximal orders in $B_{p,\infty}$ containing $\Lambda$ is $O((\discrd(\Lambda))^{\epsilon})$, for every $\epsilon >0$. 
\end{lem}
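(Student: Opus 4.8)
The plan is to reduce the counting problem to a purely local one at each prime $\ell$, using the fact that containment of orders is a local property (an order $\OO \supseteq \Lambda$ if and only if $\OO \otimes \ZZ_\ell \supseteq \Lambda \otimes \ZZ_\ell$ for every prime $\ell$), together with the fact that, once a maximal order $\OO$ is specified, so is its reduced discriminant $\discrd(\OO) = \disc(B_{p,\infty}) = p$. First I would note that $\Lambda \otimes \ZZ_\ell$ is already maximal at every prime $\ell \nmid \discrd(\Lambda)$, so at such primes there is a unique maximal $\ZZ_\ell$-order containing it (uniqueness because $B_{p,\infty}$ is split at $\ell \neq p$ gives conjugacy but here equality, and at $\ell = p$ the local order is a division algebra with a unique maximal order). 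Hence the only freedom is at the finitely many primes $\ell \mid \discrd(\Lambda)$, and a maximal overorder of $\Lambda$ is determined by a choice, at each such $\ell$, of a maximal $\ZZ_\ell$-order containing $\Lambda \otimes \ZZ_\ell$. By the product formula for the number of overorders (e.g. \cite[Corollary 9.4.7, Theorem 9.4.9, Lemma 9.5.3]{Voight}, as already invoked in the proof of Proposition~\ref{prop:z+p}), the total count is the product over $\ell \mid \discrd(\Lambda)$ of the local counts.

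The main step is therefore to bound, for a \emph{Bass} order $\Lambda$, the number $N_\ell$ of maximal $\ZZ_\ell$-orders containing $\Lambda \otimes \ZZ_\ell$ by something like $v_\ell(\discrd(\Lambda)) + 1$, or at worst a polynomial in $v_\ell(\discrd(\Lambda))$. This is where I would invoke the structure theory of local Bass orders: a Bass order is basic (\cite[Proposition 24.5.10]{Voight}), so $\Lambda \otimes \ZZ_\ell$ contains the maximal order of a quadratic $\ZZ_\ell$-subalgebra, and local Bass orders in quaternion algebras over $\ZZ_\ell$ have been completely classified (Brzezinski \cite{Brz90}, or \cite[\S 24]{Voight}). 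From this classification one reads off that a local Bass order of level $\ell^n$ (equivalently with $v_\ell(\discrd) = n$ up to the contribution of $\disc B_{p,\infty}$) is contained in at most $O(n)$ maximal orders — in the split case $M_2(\ZZ_\ell)$ an Eichler order of level $\ell^n$ lies in exactly $2$ maximal orders when $n \geq 1$, and the general Bass (possibly residually ramified) case is similar with the bound linear in $n$. I would cite \cite[Proposition 5.2]{EHLMP20} here, since the excerpt explicitly says this is proven there (with the remark that the hereditary hypothesis stated there is unnecessary; one can instead argue directly from the local classification, exactly as promised in the footnote after Theorem~\ref{thm:makebass}).

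Putting the pieces together: write $\discrd(\Lambda) = p \cdot \prod_{\ell} \ell^{n_\ell}$. The number of maximal overorders is
\[
\prod_{\ell \mid \discrd(\Lambda)} N_\ell \;\leq\; \prod_{\ell} (C n_\ell + C) \;\leq\; \prod_{\ell \mid \discrd(\Lambda)} C(n_\ell+1),
\]
for an absolute constant $C$. Each factor $C(n_\ell + 1)$ is $O(\ell^{\epsilon n_\ell})$ for $\ell$ large (and bounded for the finitely many small $\ell$), so the whole product is $O\!\big(\prod_\ell \ell^{\epsilon n_\ell}\big) = O(\discrd(\Lambda)^{\epsilon})$ for every $\epsilon > 0$; this is just the standard divisor-bound estimate $d(n) = O(n^{\epsilon})$ applied after the observation that $\prod_\ell(n_\ell+1) = d\big(\discrd(\Lambda)/p\big)$ up to a factor of $2$ from the prime $p$.

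\emph{Expected main obstacle.} The delicate point is the uniform linear bound $N_\ell = O(v_\ell(\discrd \Lambda))$ in the local count — in particular handling the residually ramified Bass orders (which is precisely the case that arises here, since $\ZZ + P$ and its relatives are residually ramified at $p$, cf. the remark after Proposition~\ref{prop:z+p}) and making sure the bound does not secretly degrade to quadratic or exponential in the level. The cleanest route is to lean directly on the local Bass order classification rather than re-deriving it; the footnote in the excerpt signals that the authors intend exactly this, so I would state the local bound as a lemma with a reference to \cite{Brz90}/\cite[\S 24.5]{Voight} and \cite[Proposition 5.2]{EHLMP20}, and spend the bulk of the written proof on the (routine) globalization and the divisor-bound estimate.
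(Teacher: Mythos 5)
Your proposal is correct and follows essentially the same route as the paper: localize via the local--global dictionary, bound the number of maximal overorders at each prime by (a constant times) $v_\ell(\discrd\Lambda)+1$ using the classification of local Bass orders (the paper simply cites \cite[Proposition 4.2]{EHLMP20} for the bound $\prod_{q\neq p}(v_q(\discrd\Lambda)+1)$), and finish with the divisor bound $d(n)=O(n^{\epsilon})$. One small slip: an Eichler order of level $\ell^n$ in $M_2(\QQ_\ell)$ is the intersection of exactly two maximal orders but is \emph{contained in} the $n+1$ maximal orders along the geodesic between them in the Bruhat--Tits tree; this does not affect your argument, since the linear bound $O(n_\ell)$ you actually use is the correct one.
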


\begin{proof}
    By Proposition 4.2 of~\cite{EHLMP20} and the local-global dictionary for orders~\cite[Theorem 9.1.1]{V2013}, the number of maximal overorders of $\Lambda$ is bounded by 
\[
\prod_{\substack{q \text{ prime} \\ q\neq p}} v_q(\discrd(\Lambda))+1,
\]
and this quantity is equal to the number of divisors of $\discrd(\Lambda)/p^{v_p(\discrd(\Lambda))}$. The number of divisors of an integer $n$ is $O(n^{\epsilon})$ for every $\epsilon>0$~\cite[Theorem 315]{HW08}. The claim of the lemma follows. 

\end{proof}
We now prove the main theorem of this section, which states that our algorithm computes the endomorphism ring of a supersingular elliptic curve $E$ defined over $\FF_{p^2}$ in $O(p^{1/2}(\log p)^2(\log \log p)^3)$ bit operations, conditional on GRH (and assuming no further heuristics).
\begin{thm}[GRH]\label{thm:EndE}
  Algorithm~\ref{alg:endo_ring} is correct. Assuming GRH, Algorithm~\ref{alg:endo_ring} terminates in expected \[O(p^{1/2}(\log p)^2(\log \log p)^3)\] bit operations. 
\end{thm}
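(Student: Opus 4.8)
The plan is to read off correctness from Theorems~\ref{thm:makebass} and~\ref{thm:bass_from_inseps} together with the Deuring correspondence, and then to show that Steps~\ref{step:isotoBpinf} and~\ref{step:enummax} cost only $p^{o(1)}$ bit operations, so that the running time is governed by the single call to Algorithm~\ref{alg:bass} in Step~\ref{step:makebassorder}. For correctness: by Theorem~\ref{thm:makebass}, Step~\ref{step:makebassorder} returns a basis of a Bass order $\Lambda=\Lambda_{\alpha_1\alpha_2}$ with $\Lambda\subseteq\End(E)$ and $\End^0(E)\cong B_{p,\infty}$. For Step~\ref{step:isotoBpinf} I would use that $\alpha_1,\alpha_2$ have trace $0$ and reduced norms $2d_1^2p$, $2d_2^2p$ (Proposition~\ref{prop:trace_zero_insep}), so that $\Lambda\otimes\QQ$ is determined by these two norms together with the single integer $s=\Trd(\alpha_1\alpha_2)=\alpha_1\alpha_2+\alpha_2\alpha_1$; setting $i=\alpha_1$ and letting $j$ be the unique $\QQ$-linear combination of $\alpha_1,\alpha_2$ with $ij=-ji$ yields explicit $a,b\in\QQ^\times$ with $\Lambda\otimes\QQ\cong H(a,b)$, and $H(a,b)\cong B_{p,\infty}$ because $\Lambda\otimes\QQ\cong\End^0(E)$ (one may transport everything to a fixed model of $B_{p,\infty}$ in polynomial time via~\cite{CKMZ22}). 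In Step~\ref{step:enummax}, $f(\End(E))$ is a maximal order of $H(a,b)$ containing $f(\Lambda)$; since $\Lambda$ is Bass, Lemma~\ref{lem:bass_maxorder_bound} bounds the number of maximal orders containing $f(\Lambda)$, and these can be enumerated by factoring $\discrd(\Lambda)$, listing at each bad prime $q$ the boundedly many maximal $\ZZ_q$-overorders of $\Lambda\otimes\ZZ_q$ via~\cite[Proposition 4.2]{EHLMP20} (using that $\Lambda$ is locally Bass), and gluing by the local--global dictionary~\cite[Theorem 9.1.1]{V2013}. The test ``$\OO\cong\End(E)$?'' is performed by computing, via Lemma~\ref{lem:deuring}, a supersingular $E'/\FF_{p^2}$ with $\End(E')\cong\OO$ and checking whether $j(E')\in\{j(E),j(E)^p\}$; by the Deuring correspondence $\End(E')\cong\End(E)$ iff $j(E')\in\{j(E),j(E)^p\}$, so the test is correct and the enumeration terminates with a maximal order isomorphic to $\End(E)$.

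For the running time, Step~\ref{step:makebassorder} costs an expected $O(p^{1/2}(\log p)^2(\log\log p)^3)$ bit operations by Theorem~\ref{thm:makebass}. The key observation is that all auxiliary integers have $O(\log p)$ bits: the isogenies $\phi_i$ have degree $\ell_i^{k_i}$ with $k_i$ at most the walk length in Algorithm~\ref{alg:insependo}, which is $O(\log p)$, so $d_1,d_2$, $\deg\rho=d_1^2d_2^2d^2$, $\deg(\alpha_1\alpha_2)$ and $\discrd(\Lambda)=p^2\lvert\disc\rho\rvert$ are all $p^{O(1)}$. Hence, in Step~\ref{step:isotoBpinf}, the integer $s=-p\Trd(\rho)$ (with $\rho=-\alpha_1\alpha_2/p$ as in Proposition~\ref{prop:gorenstein}) satisfies $\lvert s\rvert\le 2\sqrt{\deg(\alpha_1\alpha_2)}=p^{O(1)}$ and is computed in $\mathrm{poly}(\log p)$ bit operations by the generalization of Schoof's algorithm~\cite{BCEMP} applied to $\alpha_1\alpha_2$ (evaluating it on $E[\ell]$ for small primes $\ell$ with $\prod\ell>4\sqrt{\deg(\alpha_1\alpha_2)}$, one explicit low-degree isogeny at a time, and reconstructing $s$ by the Chinese Remainder Theorem); then $a,b$ and $f$ are obtained by $\mathrm{poly}(\log p)$-size rational linear algebra. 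In Step~\ref{step:enummax}, $\discrd(\Lambda)$ has $O(\log p)$ bits, so a rigorous subexponential integer-factoring algorithm (such as that of Lenstra and Pomerance) factors it in expected $\exp(O(\sqrt{\log p\,\log\log p}))=p^{o(1)}$ bit operations; then by Lemma~\ref{lem:bass_maxorder_bound} the number of maximal orders containing $f(\Lambda)$ is $O(\discrd(\Lambda)^{\epsilon})=p^{o(1)}$, each has a basis of size $\mathrm{poly}(\log p)$ (its index over $f(\Lambda)$ is $\discrd(\Lambda)/p=p^{O(1)}$), they are enumerated in $\mathrm{poly}(\log p)$ time each, and each test ``$\OO\cong\End(E)$?'' costs $\mathrm{poly}(\log p)$ by Lemma~\ref{lem:deuring} plus one exponentiation in $\FF_{p^2}$. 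Therefore Steps~\ref{step:isotoBpinf} and~\ref{step:enummax} together cost $p^{o(1)}$, and the total is $O(p^{1/2}(\log p)^2(\log\log p)^3)$; GRH enters only through Theorem~\ref{thm:makebass} and Lemma~\ref{lem:deuring}, while the factoring and the overorder enumeration are unconditional.

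The main obstacle is not any single estimate but the uniform bookkeeping that forces every integer in play---the isogeny degrees, $\deg\rho$, $\disc\rho$, $\discrd(\Lambda)$ and $\Trd(\alpha_1\alpha_2)$---to have $O(\log p)$ bits, so that the subexponential factoring, Schoof's algorithm and the subexponential enumeration of maximal overorders are all $p^{o(1)}$ and hence dominated by Step~\ref{step:makebassorder}; a secondary point requiring care is the justification of the $j$-invariant equality check as a correct decision procedure for $\OO\cong\End(E)$, i.e.\ the two-to-one Deuring correspondence between supersingular $j$-invariants in $\FF_{p^2}$ and conjugacy classes of maximal orders in $B_{p,\infty}$.
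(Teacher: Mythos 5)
Your proposal is correct and follows essentially the same route as the paper's proof: correctness and the dominant cost come from Theorem~\ref{thm:makebass}, the single trace $\Trd(\alpha_1\alpha_2)$ is computed in polynomial time by the generalized Schoof algorithm of~\cite{BCEMP} since $\rho=-\alpha_1\alpha_2/p$ has smooth degree $p^{O(1)}$, the discriminant is factored unconditionally in subexponential time via~\cite{LP92}, the maximal overorders are enumerated locally via~\cite{EHLMP20} and bounded by Lemma~\ref{lem:bass_maxorder_bound}, and each candidate is tested with Lemma~\ref{lem:deuring} and the $j$-invariant check. The only cosmetic difference is your direct construction of an anticommuting pair from $\alpha_1,\alpha_2$ in place of the paper's Gram--Schmidt diagonalization, which amounts to the same computation.
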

\begin{proof}
By Theorem~\ref{thm:makebass}, Step~\ref{step:makebassorder} runs in expected time $O(p^{1/2}(\log p)^2(\log \log p)^3)$. Moreover, $\Lambda$ is Bass.  We now discuss Step~\ref{step:isotoBpinf}. First, compute the Gram matrix $G$ under the trace pairing of the basis $1,\alpha_1,\alpha_2,\alpha_1\alpha_2$ for $\Lambda$: by the discussion in Section 4, we need to compute a single trace, namely $\Trd(-\alpha_1\alpha_2/p)$. This trace can be computed in time polynomial in $\log p$ with a generalization of Schoof's algorithm \cite{Koh96, BCEMP}, since $\rho\coloneqq-\alpha_1\alpha_2/p$ is a cyclic isogeny of degree $2^23^{2k_1}5^{2k_2}$  and $k_1,k_2=O(\log p)$. With $G$, compute $a,b\in \QQ^{\times}$ such that $\Lambda\otimes \QQ$ is isomorphic to $H(a,b)$ with the Gram-Schmidt process. 

We now outline how to do Step~\ref{step:enummax}.
 We factor $\disc(\rho)$ to obtain a factorization of $\discrd(\Lambda)=p^4|\disc(\rho)|$. Since $\rho$ is the product of $2+2k_1+2k_2=O(\log p)$ isogenies of degree at most $5$, the degree of $\rho$ is $O(p^C)$ for some $C$. This implies $-\disc(\rho) = O(p^C)$ as well.
Therefore we can factor $\disc(\rho)$ in time subexponential in $\log p$~\cite[Theorem 1]{LP92}. 
  For each $q|\discrd(\Lambda)$ such that $q\not=p$, we can enumerate maximal $\ZZ_q$-orders containing $f(\Lambda)\otimes \ZZ_q$ efficiently using Algorithm 4.3 of~\cite{EHLMP20} and then enumerate the $\ZZ$-orders containing $f(\Lambda)$; see Steps 1(a) and 3(a) in Algorithm 5.4 of~\cite{EHLMP20}. For each maximal order $\OO$ containing $f(\Lambda)$, compute an elliptic curve $E'$ with $\End(E')\cong \OO$. This can be done in polynomial time in $\log p$ by Lemma~\ref{lem:deuring}. 
If $j(E')\in \{j(E),j(E)^p\}$, we return $\OO$. Thus the algorithm is correct. 

By Lemma~\ref{lem:bass_maxorder_bound}, the number of maximal overorders of $f(\Lambda)$ is $O(p^{\epsilon})$ for every $\epsilon>0$.  We conclude that Step~\ref{step:enummax} takes $O(p^{\epsilon})$ time for any $\epsilon>0$. In particular, the expected time required to complete Step~\ref{step:enummax} is dominated by the expected time required to complete Step~\ref{step:makebassorder}. 
\end{proof}

\section{The number of inseparable reflections needed to generate \texorpdfstring{$\End(E)$}{End(E)}}\label{sec:inseparablegens}

Let $E$ be a supersingular elliptic curve $E$ over $\FF_{p^2}$. 
Let $P\coloneqq \pi\Hom(E,E^{(p)})\subseteq \End(E)$ be the ideal of inseparable endomorphisms of $E$. 
Theorem~\ref{thm:makebass} implies that for an appropriate choice of parameters, with two calls to Algorithm~\ref{alg:insependo} we compute a generating set for a Bass order $\Lambda$ contained in $\ZZ+P$. If one had a basis for $\ZZ+P$, rather than just a Bass suborder, then the algorithms of Voight~\cite{V2013} can compute a basis for $\End(E)$ efficiently, since $\End(E)$ is the unique maximal order containing $\ZZ+P$ by Proposition~\ref{prop:z+p}. This raises the following question: how many calls to Algorithm~\ref{alg:insependo} does one expect to make before computing a generating set for $\ZZ+P$? 
In this section, we give a heuristic argument   showing that the number of inseparable endomorphisms required to generate $\ZZ+P$ is bounded above by a constant, independent of the field; empirically, four inseparable reflections do the trick more often than not. This results in a second algorithm  for computing $\End(E)$:  compute inseparable reflections with Algorithm~\ref{alg:insependo} until finding a generating set for $\ZZ+P$, and then compute a basis for the maximal order containing $\ZZ+P$ using the algorithms of~\cite{V2013}. This algorithm will be  slower than Algorithm~\ref{alg:endo_ring}, but is simpler to implement; we discuss further implementation details in the Appendix.

\subsection{The expected number of inseparable reflections in a generating set for \texorpdfstring{$\ZZ+P$}{Z+P}}\label{sec:heuristic}

Suppose we run Algorithm~\ref{alg:bass} twice on input  $E$, a supersingular elliptic curve defined over $\FF_{p^2}$, producing two   orders $\Lambda_{\alpha_1\alpha_2}$ and $\Lambda_{\alpha_3\alpha_4}$  in $\ZZ+P\subseteq \End(E)$ spanned respectively by  $1,\alpha_1,\alpha_2,\alpha_1\alpha_2$ and $1,\alpha_3,\alpha_4,\alpha_3\alpha_4$, where $\alpha_i$ is an inseparable reflection for every $i=1,\ldots,4$. Let $\Lambda$ be the order in $\End(E)$ generated by $\alpha_1,\alpha_2,\alpha_3,\alpha_4$.

\begin{center}
 \begin{tikzcd}
  &  \End(E) \arrow[d, dash, "p"]&   \\
 &  \mathbb Z+P \arrow[d, dash]&   \\
&  \Lambda\arrow[dr,dash] &   \\
\Lambda_{\alpha_1\alpha_2}  \arrow[ur,dash]& & \Lambda_{\alpha_3\alpha_4} \\
\end{tikzcd}
\end{center}
Then 
\[\discrd(\Lambda)=\discrd( \mathbb Z+P)\cdot[\mathbb Z+P:\Lambda]=p^2\cdot[\mathbb Z+P:\Lambda],
\] and $\discrd(\Lambda)$ divides both $\discrd(\Lambda_{\alpha_1\alpha_2})$  and $\discrd(\Lambda_{\alpha_3\alpha_4})$. Defining $\rho_1=\frac{\alpha_1\alpha_2}{p}$ and $\rho_2=\frac{\alpha_3\alpha_4}{p}$,  we have  
\[
\discrd\Lambda_{\alpha_1\alpha_2} =p^2|\disc(\rho_2)| \text{ and }  \discrd\Lambda_{\alpha_3\alpha_4} =p^2|\disc(\rho_1)|. 
\]
 In particular, $[\mathbb Z+P:\Lambda]=\frac{\discrd(\Lambda)}{p^2}$ divides $\gcd(\disc(\rho_1),\disc(\rho_2))$. If $\gcd(\disc\rho_1,\disc\rho_2)=1$, then $\Lambda = \ZZ+P$ and the four inseparable endomorphisms $\alpha_1,\alpha_2,\alpha_3,\alpha_4$ generate $\ZZ+P$. If  the distribution of the integers $\disc(\rho_1)$ and $\disc(\rho_2)$ follow the same distribution as two random integers, then $\disc(\rho_1)$ and $\disc(\rho_2)$ are coprime with probability $6/\pi^2$. Assuming this,  four calls to Algorithm~\ref{alg:insependo} produce a generating set for $\ZZ+P$ with  at least $6/\pi^2\approx 0.6$ probability.

Unfortunately, the integers $D_i=\disc(\rho_i)$ are not distributed like random integers. First of all, the integer $D_i$ is a discriminant, which imposes congruency conditions on $D_i$. Second, the prime $p$ is not split in $\ZZ[\rho_i]$, imposing another congruence condition. Finally, the fact that $\rho_i$ is an endomorphism of smooth degree enforces relations in the ideal class group of $\ZZ[\rho_i]$. In any case, the following heuristic suffices for our purposes:

\begin{heuristic}\label{heuristic:coprime}
Let $p>3$ be a prime and let $\ell<p/4$ be a prime. Let $E$ be a supersingular elliptic curve over $\FF_{p^2}$. There exists a constant $c>0$, independent of $p$, such that if $\rho_i=-\alpha_{i1}\alpha_{i2}/p$ where $\alpha_{ij}$, $1\leq i,j\leq 2$ are the outputs of four calls to Algorithm~\ref{alg:insependo} on input $(E/\FF_{p^2},\ell,1)$, then $\Pr[\gcd(\disc\rho_1,\disc\rho_2)=1]\geq c$.
\end{heuristic}

The following theorem follows from the above discussion:
\begin{thm}\label{thm:boundedgens}
Assume Heuristic~\ref{heuristic:coprime}. Let $p>3$ and $\ell<p/4$ be primes, and let $E$ be a supersingular elliptic curve defined over $\FF_{p^2}$. Then the expected number of calls to Algorithm~\ref{alg:insependo} on input $(E,\ell,1)$ in order to produce a generating set for $\ZZ+P$ is bounded from above by a constant, independent of $p$. 
\end{thm}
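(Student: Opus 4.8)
The plan is to set up a sequence of Bernoulli trials, where each ``trial'' consists of two calls to Algorithm~\ref{alg:insependo} (one pair producing $\Lambda_{\alpha_1\alpha_2}$, the next producing $\Lambda_{\alpha_3\alpha_4}$), and argue that each trial independently succeeds with probability at least the constant $c$ from Heuristic~\ref{heuristic:coprime}. A geometric waiting-time bound then gives the result. First, I would recall the chain of lattices established in the discussion preceding the statement: $\Lambda_{\alpha_1\alpha_2}, \Lambda_{\alpha_3\alpha_4} \subseteq \Lambda \subseteq \ZZ+P \subseteq \End(E)$, where $\Lambda = \langle \alpha_1,\alpha_2,\alpha_3,\alpha_4\rangle$, and the key divisibility fact $[\ZZ+P:\Lambda] \mid \gcd(\disc\rho_1, \disc\rho_2)$ with $\rho_1 = -\alpha_1\alpha_2/p$, $\rho_2 = -\alpha_3\alpha_4/p$. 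Consequently, if $\gcd(\disc\rho_1,\disc\rho_2)=1$ then $\Lambda = \ZZ+P$, i.e., the four inseparable reflections generate $\ZZ+P$ on the nose. By Proposition~\ref{prop:z+p}, $\End(E)$ is then the unique maximal order over $\Lambda$, so this is exactly the generating set we want.

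Next I would make the waiting-time argument precise. Group the calls to Algorithm~\ref{alg:insependo} in consecutive blocks of four; each block yields four inseparable reflections $\alpha_{i1},\alpha_{i2}$ for $i=1,2$ and hence two orders $\rho_1,\rho_2$ as above. By Heuristic~\ref{heuristic:coprime}, the event $\gcd(\disc\rho_1,\disc\rho_2)=1$ occurs with probability $\geq c$, and on that event the block's outputs already generate $\ZZ+P$. Since the algorithm uses fresh randomness on each call, the blocks are independent, so the number of blocks $N$ needed until the first success is dominated by a geometric random variable with parameter $c$, giving $\mathbb{E}[N] \leq 1/c$. Thus the expected number of calls to Algorithm~\ref{alg:insependo} is at most $4\,\mathbb{E}[N] \leq 4/c$, a constant independent of $p$. (One should note that even if some earlier block fails, the accumulated reflections can only make the generated order larger, so stopping at the first successful block is a valid upper bound on the true stopping time; this monotonicity is worth a sentence.)

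A couple of technical points deserve care. One must check that the outputs of Algorithm~\ref{alg:insependo} on input $(E,\ell,1)$ are genuinely inseparable reflections to which the machinery of Section~\ref{sec:inseparables} applies --- this is exactly the correctness part of Proposition~\ref{prop:specialendoruntime}, noting that $d=1$ is permitted there. One also needs $\ell < p/4$ so that the hypotheses of that proposition (and of Lemma~\ref{lem:check_deps}, vacuous when $d=1$) are met, which is assumed in the statement. The formula $\discrd(\Lambda_{\alpha_1\alpha_2}) = p^2|\disc(\rho_2)|$ and its companion, used to extract the divisibility $[\ZZ+P:\Lambda]\mid\gcd(\disc\rho_1,\disc\rho_2)$, come from Proposition~\ref{prop:gorenstein}(ii) together with $\discrd(\ZZ+P)=p^2$ from Proposition~\ref{prop:z+p} and multiplicativity of reduced discriminant under inclusion of orders.

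The main obstacle is not really mathematical --- given Heuristic~\ref{heuristic:coprime}, the argument is a short reduction plus a geometric-distribution estimate. The conceptual subtlety is just making sure the independence across blocks is clean (each call to Algorithm~\ref{alg:insependo} draws its own random non-backtracking walk, so this is immediate) and that one is allowed to ``forget'' unsuccessful blocks when bounding the stopping time. If one wanted a fully rigorous statement without the heuristic, the hard part would be proving an unconditional lower bound on $\Pr[\gcd(\disc\rho_1,\disc\rho_2)=1]$, which would require understanding the joint distribution of the discriminants $\disc(\rho_i)$ --- discriminants subject to a splitting condition at $p$ and to class-group relations forced by the smoothness of $\rho_i$ --- and this is precisely what the heuristic sidesteps.
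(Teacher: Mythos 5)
Your proposal is correct and matches the paper's argument: the paper's proof of this theorem is literally ``follows from the above discussion,'' namely the divisibility $[\ZZ+P:\Lambda]\mid\gcd(\disc\rho_1,\disc\rho_2)$ combined with Heuristic~\ref{heuristic:coprime}, and you have reproduced that reduction together with the (implicit in the paper) geometric waiting-time bound of $4/c$ calls. The only quibble is cosmetic: you refer to $\rho_1,\rho_2$ as ``orders'' at one point when they are endomorphisms.
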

Heuristic \ref{heuristic:coprime} is~\cite[Heuristic 5.2]{EHLMP20} which is assumed in~\cite[Theorem 5.3]{EHLMP} to prove that~\cite[Algorithm 5.1]{EHLMP20} produces a Bass order in $\End(E)$ and terminates in expected $O(p^{1/2+\epsilon})$ time. We use the heuristic in a new way. 

\begin{rmk}\label{rmk:secondheuristic}
This heuristic argument applies to any pair of orders generated by two pairs of of two non-commuting elements of a maximal order in  $B_{p,\infty}$, the quaternion algebra ramified at $p$ and $\infty$. 
Let $\alpha_1,\alpha_2$ be two arbitrary non-commuting elements of a quaternion order $\OO\subseteq B_{p,\infty}$ and let 
$\Lambda=\langle \alpha_1,\alpha_2\rangle\coloneqq \ZZ+\ZZ\alpha_1+\ZZ\alpha_2+\ZZ\alpha_1\alpha_2$ be the order they generate, and let $T_i:=\Trd(\alpha_i)$, $N_i:=\Nrd(\alpha_i)$ for $i=1,2$, and let $T_{12}=\Trd(\alpha_1\alpha_2)$. Then the discriminant of 
$\Lambda$ is 
\[
\det\begin{pmatrix}
2 & T_1 & T_2 & T_{12} \\ 
T_1 & 2N_1 & T_1T_2-T_{12} & N_1T_2 \\ 
T_2  & T_1T_2-T_{12} & 2N_2 & N_2T_1 \\ 
T_{12} & T_2N_1 & T_1N_2 & 2N_1N_2
\end{pmatrix}
= \left(\frac{1}{4}\disc(T_2\alpha_1+T_1\alpha_2-2\alpha_1\alpha_2)\right)^2~.
\]

 Suppose now that we sample $\alpha_{11},\alpha_{12},\alpha_{21},\alpha_{22}$ uniformly (in some reasonable sense - for example, uniformly from the elements of $\OO$ whose norm is bounded by some fixed polynomial in $p$) and that $\alpha_{i1}\alpha_{i2}\not=\alpha_{i2}\alpha_{i1}$ for $i=1,2$. 
  Define $\Lambda_{1}$ and $\Lambda_2$ to be the orders generated by $\alpha_{11},\alpha_{12}$ and $\alpha_{21},\alpha_{22}$, respectively, let 
 \[
 \rho_i \coloneqq(\Trd\alpha_{i2})\alpha_{i1}+(\Trd\alpha_{i1})\alpha_{i2}-2\alpha_{i1}\alpha_{i2},
 \]
 let $D_i=\disc \rho_i,$, and let $\Lambda$ denote the order generated by the $\alpha_{ij}$ for $1\leq i,j\leq 2$. Then 
 \[
 [\OO:\Lambda] = \frac{\discrd\Lambda}{\discrd\OO}, \quad [\OO:\Lambda_i]=\frac{\discrd\Lambda_i}{\discrd\OO}=\frac{|D_i|}{4\discrd\OO}
 \]
 and $[\OO:\Lambda]$ divides 
 \[
 \gcd([\OO:\Lambda_1],[\OO:\Lambda_2])=\gcd\left(\frac{D_1}{4\discrd\OO},\frac{D_2}{4\discrd\OO}\right).
 \]
Therefore the $\alpha_{ij}$ will generate $\OO$ with probability at least the probability that 
\[
\gcd(D_1,D_2)=4\discrd\OO.
\]
A reasonable heuristic would then be that this probability is bounded from below by a constant, independent of $p$. We will explore this experimentally in the next section as well. 
\end{rmk}

\subsection{Computational experiments}\label{subsec:evidence}
We implemented Algorithm~\ref{alg:insependo} along with the various algorithms discussed in this section in order to empirically determine the expected value of the number of inseparable reflections of $E$ required before generating $\End(E)$. We believe this expectation is bounded by a constant, independent of $p$ or $E$. We restricted our attention to elliptic curves $E$ defined over $\mathbb{F}_{p^2}$ but not over $\FF_p$, since there are asymptotically faster algorithms for computing the endomorphism ring of such curves. 

To experimentally test Heuristic~\ref{heuristic:coprime} and to understand the expected number of inseparable reflections in a generating set for $\ZZ+P$, we conducted the following experiment. For $n\in \{16,17,\ldots,32\}$, we repeated the following procedure 100 times: we chose the first prime $p$ after $2^n$ and computed a pseudorandom supersingular $j$-invariant in $\FF_{p^2}-\FF_p$ by taking a random walk in $G(p,2)$ of length $\lfloor\log_2p\rfloor$. We then generated four inseparable reflections $\alpha_i$, $1\leq i\leq 4$, of degree $2^{2t}p$  for $i=1,3$ and $3^{2s}p$  for $i=2,4$. Next, we tested whether $\gcd\left(\disc\left(\frac{\alpha_1\alpha_2}{p}\right),\disc\left(\frac{\alpha_3\alpha_4}{p}\right)\right)=1$ and whether $ 1,\alpha_1,\alpha_2,\alpha_3,\alpha_4$ generate $\ZZ+P$. We report the sample mean for the random variable which is $1$ when $\gcd\left(\disc\left(\frac{\alpha_1\alpha_2}{p}\right),\disc\left(\frac{\alpha_3\alpha_4}{p}\right)\right)=1$ and $0$ otherwise, and  the sample mean of the  random variable which is $1$ when $ 1,\alpha_1,\alpha_2,\alpha_3,\alpha_4$ generate $\mathbb Z+P$ and $0$ otherwise in Table \ref{fig:table-probability1}.
\begin{figure}
\includegraphics[scale=.75]{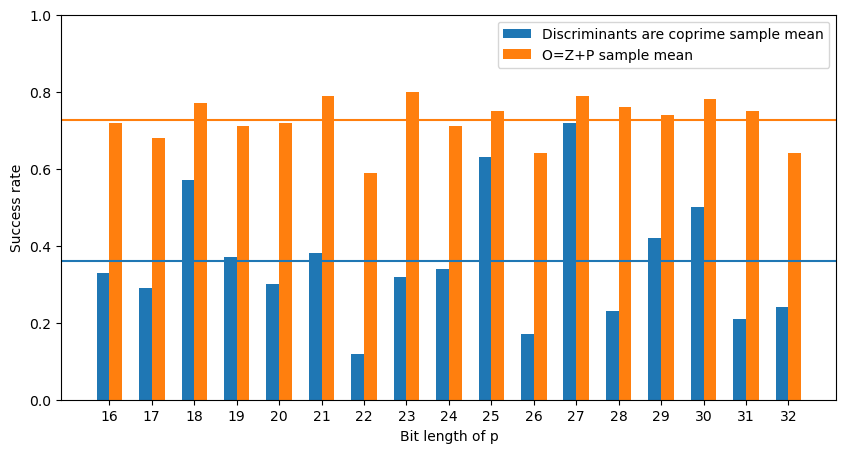}
\caption{Collected data for testing Heuristic~\ref{heuristic:coprime}. Orange bars represent the experimental probability that $\gcd\left(\disc\left(\frac{\alpha_1\alpha_2}{p}\right),\disc\left(\frac{\alpha_3\alpha_4}{p}\right)\right)=1$, blue bars represent the experimental probability that  $ 1,\alpha_1,\alpha_2,\alpha_3,\alpha_4$ generate $\mathbb Z+P$, where $\alpha_i$ are inseparable reflections of a supersingular elliptic curve. Averages of the two frequencies are plotted as well.}
\label{fig:table-probability1}
\end{figure}
\begin{figure}
    \centering
    \includegraphics[scale=.75]{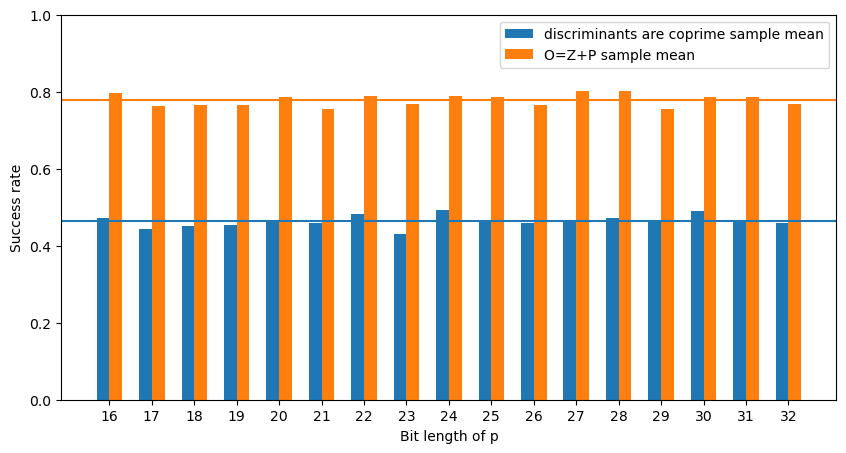}
    \caption{Collected data for testing heuristic in Remark~\ref{rmk:secondheuristic}. Orange bars represent the experimental probability that $\gcd\left(\disc\left(D_1,D_2\right)\right)=4p^2$, blue bars represent the experimental probability that  $1,\alpha_{11},\alpha_{12},\alpha_{21},\alpha_{22}$ generate $\mathbb Z+P$ where $\alpha_{ij}$ are random elements of $\ZZ+P$ in a random maximal order in $B_{p,\infty}$ and $D_i$ is the discriminant of $\rho_i \coloneqq(\Trd\alpha_{i2})\alpha_{i1}+(\Trd\alpha_{i1})\alpha_{i2}-2\alpha_{i1}\alpha_{i2}$. Averages of the two frequencies are plotted as well.}
    \label{fig:table-probability2}
\end{figure}

The data does not seem to invalidate Heuristic~\ref{heuristic:coprime}, but it also does not illuminate what the actual probability is that two inseparable reflections have coprime discriminants. The data does support our desired conclusion, namely that on average, the number of inseparable reflections needed to generate $\ZZ+P$ is bounded from above by a constant, independent of $p$. In particular, that constant appears to be bounded from above by four! In any case, the coprimality of the discriminants is not a necessary condition for the inseparable reflections to generate $\ZZ+P$. 

An idealized version of the algorithm sketched in Section~\ref{sec:inseparablegens} would generate random  endomorphisms in $\ZZ+P$ of bounded norm, rather than ``structured'' endomorphisms such as the inseparable reflections output by Algorithm~\ref{alg:insependo}. One might wonder whether the heuristic suggested in~\ref{rmk:secondheuristic} holds, and how many random elements of $\ZZ+P$ are required to generate $\ZZ+P$. We conducted the following numerical experiment. For $n\in \{16,17,\ldots,32\}$, we repeated the following procedure 100 times: we chose the first prime $p$ after $2^n$ and computed a pseudorandom maximal order $\OO$ in the quaternion algebra $B_{p,\infty}$ ramified at $p$ and $\infty$.  We then computed $\ZZ+P$, where $P$ is the unique $2$-sided ideal of $\OO$ of reduced norm $p$ and sampled four random elements $\alpha_{11},\alpha_{12},\alpha_{21},\alpha_{22}\in \ZZ+P$. We compute 
\[
 \rho_i \coloneqq(\Trd\alpha_{i2})\alpha_{i1}+(\Trd\alpha_{i1})\alpha_{i2}-2\alpha_{i1}\alpha_{i2}
 \]
 and $D_i=\disc\rho_i$ and then tested whether $\gcd\left(D_1,D_2\right)=4\discrd(\ZZ+P)=4p^2$ and whether $ 1,\alpha_1,\alpha_2,\alpha_3,\alpha_4$ generate $\ZZ+P$. The sample means are reported in Table~\ref{fig:table-probability2}. The probabilities that $\gcd(D_1,D_2)=4p^2$ and that $\{1,\alpha_{11},\alpha_{12},\alpha_{21},\alpha_{22}\}$ generate $\ZZ+P$ do not appear to decay as $p$ increases in the range $[2^{16},2^{32}]$.

\appendix

\section{Appendix}\label{sec:appendix}
In this appendix, we  discuss an algorithm following the idea suggested in Section~\ref{sec:inseparablegens}, that is, to compute a basis for the endomorphism ring of a supersingular elliptic curve $E$ by making repeated calls to Algorithm~\ref{alg:insependo} to produce inseparable endomorphisms.  The extra ingredients include a generalization of Schoof's algorithm for computing the trace of an endomorphism, some algorithms of Voight~\cite{V2013} for local and global quaternion orders, and linear algebra. Below, we provide the details regarding the linear algebra necessary to complete the algorithm. Our implementation in SageMath is available at~\url{https://github.com/travismo/inseparables}.

The algorithm goes as follows: we first compute three inseparable reflections $\gamma_1,\gamma_2,\gamma_3$ of $E$. Let $P\coloneqq \Hom(E^{(p)},E)\pi$ be the ideal of inseparble endomorhisms of $E$; then $P$ is the unique $2$-sided ideal of reduced norm $p$ in $\End(E)$. Defining $\gamma_0=1$,  we next compute the Gram matrix $G=(\Trd(\gamma_i\widehat{\gamma_j}))$ for the sequence $\Gamma=(\gamma_0,\gamma_1,\gamma_2,\gamma_3)$; this is where we require a generalization of Schoof's algorithm~\cite{BCEMP}. Then $\Gamma$ is a basis for $\End^0(E)$ as a $\QQ$-vector space if and only if $\det(G)\not=0$, which we now assume. At this point, we have computed $\End^0(E)$ as a quadratic module: if we let $Q(x)=x^TGx$ denote the quadratic form induced by $G$ on $\QQ^4$, then $(\End^0(E),\deg)\cong (\QQ^4,Q)$. Having computed $\End^0(E)$ as a quadratic module, we determine its structure  as a quaternion algebra: we compute a multiplication table for the basis $\Gamma$. We then compute the order $\OO\subseteq \End(E)$ generated by $\gamma_{0},\gamma_1,\gamma_2,\gamma_3$. Finally, we enlarge the order $\OO$ by computing additional inseperable reflections until $\OO=\ZZ+P$. As mentioned above, a basis for $\End(E)$ is efficiently recovered from a basis for $\ZZ+P$ using algorithms of Voight~\cite{Voight}. 
\subsection{Computing a quadratic submodule of \texorpdfstring{$\ZZ+P$}{Z+P}} 
Recall that the output of Algorithm~\ref{alg:insependo} on input $E$ is  a trace-zero endomorphism of $E$ belonging to $P$.
We  assume that, by running Algorithm~\ref{alg:insependo} three times (with $d=1$ for simplicity) we have computed three inseparable reflections $\gamma_1,\gamma_2,\gamma_3$ of $E$ and we define $\gamma_0=1\in \End(E)$.
Since $d=1$, for $i=1,2,3$ we have
$$\gamma_i = \pi_p\widehat{\phi_i^{(p)}} \phi_i,$$
where $\phi_i\colon E \to E_i$ is a separable isogeny.

Let  $\Lambda:= \ZZ\gamma_0 + \ZZ\gamma_1 + \ZZ\gamma_2 + \ZZ\gamma_3$ be the $\ZZ$-span of  $\gamma_0,\gamma_1,\gamma_2$ and $\gamma_3$. 
Let $G\coloneqq (\Trd(\gamma_i\widehat{\gamma_j}))$ be the Gram matrix for  $\gamma_0,\gamma_1,\gamma_2,\gamma_3$. 
First, by Proposition~\ref{prop:trace_zero_insep}, we have 
$\Trd(\gamma_i)=0$ for $i=1,2,3$. For $1\leq i<j\leq 3$ define 
\[\rho_{ij} \coloneqq \widehat{\phi_i} \phi_i^{(p)}\widehat{\phi_j^{(p)}} \phi_j.
\]
Then $\Trd(\gamma_i\widehat{\gamma_j}) = p \Trd(\rho_{ij})$ and the Gram matrix of the basis $\gamma_0,\gamma_1,\gamma_2,\gamma_3$ is 
\[
G\coloneqq (\Trd(\gamma_i\widehat{\gamma_j}))_{0\leq i,j\leq 3}=\begin{pmatrix}
2 & 0 & 0 & 0 \\
0 & 2p\deg(\phi_1)^2 & p \Trd(\rho_{12}) & p \Trd(\rho_{13}) \\ 
0 & p \Trd(\rho_{13}) & 2p\deg(\phi_2)^2 & p \Trd(\rho_{23}) \\ 
0& p \Trd(\rho_{13}) & p \Trd(\rho_{23}) & 2p\deg(\phi_3)^2
\end{pmatrix}.
\]
We can compute the entries of $G$ with an algorithm for computing the trace of an endomorphism represented as a sequence of low degree isogenies with a generalization of Schoof's algorithm~\cite{BCEMP}. If $\det(G)\not=0$, then $\Lambda$ is a lattice in $\ZZ+P\subseteq \End(E)$, which we now assume. 
Therefore, as quadratic $\ZZ$-modules, we have $(\Lambda,\deg) \cong (\ZZ^4,G)$
under the isomorphism which sends $\gamma_i$ to the $i$th standard basis vector in $\ZZ^4$.
\subsection{From a quadratic module to an order in a quaternion algebra}

With the Gram matrix $G$ of the basis $\gamma_0,\gamma_1,\gamma_2,\gamma_3$ in hand, we move on to determining the structure of $\Lambda\otimes\QQ$ as a quaternion algebra. We discuss two approaches: the first involves computing an embedding of $\Lambda = \ZZ\gamma_0 + \ZZ\gamma_1 + \ZZ\gamma_2 + \ZZ\gamma_3$ into a quaternion algebra $H(a,b)$ such that $(\ZZ^4,G)\cong (H(a,b),\Nrd)$ are isomorphic as quadratic spaces. A second approach is to directly compute a multiplication table for $\gamma_1,\gamma_2,\gamma_3$, i.e. 
computing rational numbers $m_{rst}$ such that 
$\gamma_r\gamma_s = \sum_t m_{rst} \gamma_t$.
We discuss both in detail. In the first, we compute the $LDL^T$-decomposition of $G$ and read off $a$ and $b$ from the second and third entries of $D$. In the other, we solve for $m_{rst}$ by setting up a system of equations using $G$. 

\subsubsection{Computing an isomorphism of quaternion algebras 
using the Gram--Schmidt process}\label{sec:GS}

Let $G\coloneqq (G_{rs})_{0\leq r,s\leq 3} \coloneqq (\Trd(\gamma_r\widehat{\gamma_s}))_{0\leq r,s\leq 3}$ be the Gram matrix for the basis $\{\gamma_0,\gamma_1,\gamma_2,\gamma_3\}$ of a lattice $\Lambda$ in $\End(E)$. 
One approach to giving $\Lambda\otimes \QQ$ the structure of a quaternion algebra is as follows. First, 
we diagonalize the quadratic form induced by $G$ (to be precise, we compute the $LDL^T$-decomposition of 
$G$). We obtain a lower-triangular matrix $L$ with $1$'s on the diagonal and a diagonal matrix $D$ such 
that $G=LDL^T$. Denote the diagonal entries of $D$ by $d_0=2,d_1,d_2,d_3$ and define $a,b\in \QQ$ by $d_1=-2a$, 
$d_2=-2b$. Define $H(a,b)$ to be the quaternion algebra with basis $1,i,j,ij$ such that  $i^2=a$, $j^2=b$, and $ij=-ji$. Define  $R=L^T$ and $\tilde{\gamma_i}=\sum_{j} (R^{-1})_{ij}\gamma_{j}$ for $i=0,1,2,3$. Then $\{\tilde{\gamma_0},\tilde{\gamma_1},\tilde{\gamma_2},\tilde{\gamma_3}\}$  is the result of the application of the Gram--Schmidt process to the basis $\{\gamma_0,\gamma_1,\gamma_2,\gamma_3\}$ of $\Lambda\otimes \QQ$.  Since $\Trd(\tilde{\gamma_1})=0$, we have 
\[
\tilde{\gamma_1}^2 = -\tilde{\gamma_1}\widehat{\tilde{\gamma_1}} = \frac{-1}{2}\Trd(\tilde{\gamma_1}\widehat{\tilde{\gamma_1}}))=\frac{-d_1}{2} = a.
\]
Similarly,  
$(\tilde{\gamma_2})^2=b$. Since $\tilde{\gamma_3}$ and $\tilde{\gamma_1}\tilde{\gamma_2}$ are both 
orthogonal to each of $1,\tilde{\gamma_1},\tilde{\gamma_2}$, there exists $c\in \QQ$ such that  
$\tilde{\gamma_3}=c\tilde{\gamma_1}\tilde{\gamma_2}$. Taking reduced norms, we obtain 
\[
\frac{d_3}{2}=\Nrd(\tilde{\gamma_3})=\Nrd(c\tilde{\gamma_1}\tilde{\gamma_2})=\frac{c^2d_1d_2}{4}.
\]
Define $c'\coloneqq \sqrt{\frac{2d_3}{d_1d_2}}$. We therefore obtain an  isomorphism of quadratic spaces 
\begin{align*}
(\End^0(E),\deg)&\to (H(a,b),\Nrd) \\ 
x_0+x_1\tilde{\gamma_1}+x_2\tilde{\gamma_2}+x_3\tilde{\gamma_3}&\mapsto  x_0+x_1i+x_2j+x_3c'ij.
\end{align*}
This map factors through the map of quadratic modules $f\colon \End^0(E)\to (\QQ^4,G)$ which sends $\gamma_r$ to $e_r$, the $r$-th standard basis vector of $\QQ^4$, via the map 
$g\colon (\QQ^4,G)\to (H(a,b),\Nrd)$ obtained from sending the rows of $(L^T)^{-1}$ to the basis $1,i,j,ij$. 
The isomorphism $(\End^0(E),\deg)\cong (H(a,b),\Nrd)$  induces an isomorphism of quaternion algebras between $\End^{0}(E)$ with either $H(a,b)$ (in the case that $c=c'$) or $H(a,b)^{\text{op}}$ (in the case that $c=-c'$). 

\subsubsection{Computing a multiplication table using linear algebra}\label{sec:multtable}
An alternative method for representing $\End^0(E)$ using the basis $\{\gamma_0,\gamma_1,\gamma_2,\gamma_3\}$ is to compute a multiplication table, i.e. rational numbers $m_{rst}$, for $0\leq r,s,t\leq 3$, such that 
\[
\gamma_r\gamma_s = \sum_{t=0}^3 m_{rst} \gamma_t.\]
We sketch this approach, although we do not use it in our implementation. 

To compute the multiplication table $\{m_{rst}\}$, we use $G=(G_{rs})_{0\leq r,s\leq 3}$ and linear algebra. In particular, we use $G$ to set up a system of linear equations whose solutions are the $\{m_{rst}\}$ we seek. 

\begin{prop}\label{prop:mult_table}
Let $\gamma_0=1,\gamma_1,\gamma_2,\gamma_3$ be as above.  
Define 

\[ m_{000}=1,\quad   m_{rrt}= \begin{cases} -\frac{G_{rr}}{2} &: t=0 \text{ and } 1\leq r\leq 3\\ 
0 &: 0\leq r\leq 3 \text{ and } 1\leq t\leq 3.
\end{cases}
\]
Let  $\{m_{120},\ldots,m_{123}\}$, $\{m_{130},\ldots,m_{133}\}$, and $\{m_{230},\ldots,m_{233}\}$ respectively solve the following three systems of linear equations:
\[
G\begin{pmatrix}
    x_0 \\ x_1 \\ x_2 \\ x_3
\end{pmatrix} = \begin{pmatrix}
    -G_{12} \\ \frac{1}{2} G_{11}G_{21} \\ \frac{1}{2}G_{22}G_{11} 
    \\ 2\Trd(\gamma_1\gamma_2\widehat{\gamma_3})
\end{pmatrix}, 
\quad 
G\begin{pmatrix}
    x_0 \\ x_1 \\ x_2 \\ x_3
\end{pmatrix} = \begin{pmatrix}
    -G_{13} \\ \frac{1}{2} G_{11}G_{31}  
    \\ 2\Trd(\gamma_1\gamma_2\widehat{\gamma_3}) \\ \frac{1}{2}G_{33}G_{11}
\end{pmatrix},
\quad 
G\begin{pmatrix}
    x_0 \\ x_1 \\ x_2 \\ x_3
\end{pmatrix} \begin{pmatrix}
    -G_{23} \\ 2\Trd(\gamma_1\gamma_2\widehat{\gamma_3}) \\
\frac{1}{2} G_{22}G_{31} \\ 
 \frac{1}{2}G_{33}G_{21}
\end{pmatrix}.
\]
Finally, for $0\leq s < r \leq 3$, let 
\[
m_{srt}=\begin{cases}
\Trd(\gamma_r\gamma_s)-m_{rs0}
&: t=0 \\
-m_{rst} &: 1\leq t \leq 3.
\end{cases}
\]
Then $
(2\Trd(\gamma_1\gamma_2\widehat{\gamma_3}))^2 = \det G,$
and for $0\leq r \leq s \leq 3$, we have 
\[
\gamma_r\gamma_s = \sum_{t=0}^3 m_{rst}\gamma_t.
\]
In particular, the matrix $G$ determines an isomorphism between the quaternion algebra over $\QQ$ with multiplication table given by $\{m_{rst}\}$ with either $\End^0(E)$ or its opposite algebra depending on a choice for a square root of $\det G$. 
\end{prop}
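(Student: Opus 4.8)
The plan is to view $\End^0(E) = \QQ\gamma_0 + \QQ\gamma_1 + \QQ\gamma_2 + \QQ\gamma_3$ as a quadratic module with Gram matrix $G$ and to reconstruct its multiplication table one product at a time, using only the relations forced by the reduced norm form together with the single fact (Proposition~\ref{prop:trace_zero_insep}) that $\gamma_1,\gamma_2,\gamma_3$ have reduced trace $0$. The standard tools are: $\widehat{\gamma} = \Trd(\gamma) - \gamma$ and $\gamma\widehat{\gamma} = \Nrd(\gamma)$; the cyclicity $\Trd(\alpha\beta\delta) = \Trd(\beta\delta\alpha)$ of the reduced trace (immediate from the regular representation); and, for trace-zero $\alpha,\beta$, the relations $\widehat{\alpha} = -\alpha$, $\alpha^2 = -\Nrd(\alpha)$, and $\alpha\beta + \beta\alpha = \Trd(\alpha\beta) \in \QQ$.

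First I would record that, since the reduced norm form on $B_{p,\infty}$ is anisotropic and hence nondegenerate on every subspace, $\{\gamma_0,\dots,\gamma_3\}$ is a $\QQ$-basis of $\End^0(E)$ exactly when $\det G \ne 0$, which we now assume. For $1 \le i \le 3$, trace-zero gives $\gamma_i^2 = -\Nrd(\gamma_i) = -\tfrac12\Trd(\gamma_i\widehat{\gamma_i}) = -\tfrac12 G_{ii}$, which is the stated value of $m_{iit}$, and $m_{000} = 1$ since $\gamma_0 = 1$. For each product $\gamma_r\gamma_s$ with $1 \le r < s \le 3$ I would write $\gamma_r\gamma_s = \sum_t m_{rst}\gamma_t$ and apply the functionals $x \mapsto \Trd(x\widehat{\gamma_u})$, $u = 0,1,2,3$; since $\Trd(\gamma_t\widehat{\gamma_u}) = G_{tu}$, this is exactly the linear system $G\,\mathbf{m}_{rs} = \mathbf{v}_{rs}$ with $(\mathbf{v}_{rs})_u = \Trd(\gamma_r\gamma_s\widehat{\gamma_u})$, which has a unique solution because $\det G \ne 0$. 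Each entry $\Trd(\gamma_r\gamma_s\widehat{\gamma_u})$ is then evaluated with the tools above: $(\mathbf{v}_{rs})_0 = \Trd(\gamma_r\gamma_s) = -G_{rs}$, the entries in positions $r$ and $s$ reduce (via cyclicity and $\gamma_i^2 = -\tfrac12 G_{ii}$) to polynomials in the $G_{ij}$, and the remaining entry reduces to $\pm\Trd(\gamma_1\gamma_2\gamma_3) = \mp\Trd(\gamma_1\gamma_2\widehat{\gamma_3})$ — here the anticommutator relation shows $\Trd(\gamma_1\gamma_2\gamma_3)$ is alternating under permutations of $\gamma_1,\gamma_2,\gamma_3$, so every new triple product is $\pm$ the one appearing in the statement. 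This produces the three displayed systems. The products $\gamma_r\gamma_s$ with $r > s$ then come for free from $\gamma_r\gamma_s = \Trd(\gamma_r\gamma_s)\gamma_0 - \widehat{\gamma_s}\widehat{\gamma_r} = \Trd(\gamma_r\gamma_s)\gamma_0 - \gamma_s\gamma_r$ (and $\gamma_0\gamma_s = \gamma_s$, $\gamma_r\gamma_0 = \gamma_r$ trivially) by substituting the coordinates of $\gamma_s\gamma_r$ already found, which is precisely the final case distinction for $m_{rst}$.

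Next I would establish $(2\Trd(\gamma_1\gamma_2\widehat{\gamma_3}))^2 = \det G$, which is what makes $\Trd(\gamma_1\gamma_2\widehat{\gamma_3})$ recoverable up to sign from $G$ and thus makes the scheme effective. For this I would pass to the Gram--Schmidt orthogonalization of Section~\ref{sec:GS}: write $G = LDL^T$ with $L$ unipotent and $D = \mathrm{diag}(2,d_1,d_2,d_3)$, and set $\tilde{\gamma}_i = \sum_j (L^{-T})_{ij}\gamma_j$; because $\gamma_1,\gamma_2,\gamma_3$ are already orthogonal to $\gamma_0 = 1$, this change of basis fixes $\gamma_0$, mixes only $\gamma_1,\gamma_2,\gamma_3$, and has determinant $1$. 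Hence $\det G$ is unchanged, and $\Trd(\gamma_1\gamma_2\widehat{\gamma_3})$ is unchanged because it equals (up to sign) the alternating form $\Trd(\gamma_1\gamma_2\gamma_3)$ on the trace-zero subspace. Now $\tilde{\gamma}_1^2 = -\tfrac12 d_1$, $\tilde{\gamma}_2^2 = -\tfrac12 d_2$, and $\tilde{\gamma}_3 = c\,\tilde{\gamma}_1\tilde{\gamma}_2$ for some $c \in \QQ$; using $\Trd((\tilde{\gamma}_1\tilde{\gamma}_2)^2) = -2\Nrd(\tilde{\gamma}_1\tilde{\gamma}_2) = -\tfrac12 d_1 d_2$ one gets $\Trd(\tilde{\gamma}_1\tilde{\gamma}_2\widehat{\tilde{\gamma}_3}) = \tfrac12 c\,d_1 d_2$, while $\Nrd(\tilde{\gamma}_3) = \tfrac12 d_3$ forces $d_3 = \tfrac12 c^2 d_1 d_2$, so $\det G = \det D = 2 d_1 d_2 d_3 = c^2(d_1 d_2)^2 = (2\Trd(\gamma_1\gamma_2\widehat{\gamma_3}))^2$.

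Finally, with all $m_{rst}$ determined, the $\QQ$-algebra with basis $e_0,\dots,e_3$ and products $e_r e_s = \sum_t m_{rst} e_t$ is isomorphic to $\End^0(E) \cong B_{p,\infty}$ via $e_t \mapsto \gamma_t$, essentially by construction. The caveat about the opposite algebra is that the right-hand sides of the three systems involve $\Trd(\gamma_1\gamma_2\widehat{\gamma_3})$, which $G$ pins down only up to sign; the two sign choices interchange $\gamma_1\gamma_2$ with $\gamma_2\gamma_1$ up to the scalar $G_{12}$, hence give multiplication tables that are opposite to one another. I expect the only real work to be the bookkeeping in the second paragraph — evaluating each $\Trd(\gamma_r\gamma_s\widehat{\gamma_u})$ and matching it against the displayed vectors — together with pinning down the constant in $(2\Trd(\gamma_1\gamma_2\widehat{\gamma_3}))^2 = \det G$, for which the orthogonalization step above is the cleanest route; nothing else should present a genuine obstacle.
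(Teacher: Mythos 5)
Your proposal is correct and follows the same overall strategy as the paper's proof: compute the diagonal products from $\Trd(\gamma_i)=0$, obtain the $r>s$ products from $\gamma_r\gamma_s=\Trd(\gamma_r\gamma_s)-\gamma_s\gamma_r$, and recover each $\gamma_r\gamma_s$ with $r<s$ by pairing against the basis under the trace form, which yields exactly the displayed systems $G\,\mathbf{m}_{rs}=\mathbf{v}_{rs}$ once the entries $\Trd(\gamma_r\gamma_s\widehat{\gamma_u})$ are evaluated and the alternating behaviour of $\Trd(\gamma_1\gamma_2\widehat{\gamma_3})$ under $S_3$ is noted.

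The one place you genuinely diverge is the identity $(2\Trd(\gamma_1\gamma_2\widehat{\gamma_3}))^2=\det G$. The paper gets this by introducing the trilinear form $m(\alpha_1,\alpha_2,\alpha_3)=\Trd((\alpha_1\alpha_2-\alpha_2\alpha_1)\widehat{\alpha_3})$, observing $m(\gamma_1,\gamma_2,\gamma_3)=2\Trd(\gamma_1\gamma_2\widehat{\gamma_3})$, and citing Voight, Lemma~15.4.7, for $m(\alpha_1,\alpha_2,\alpha_3)^2=\det(\Trd(\alpha_i\widehat{\alpha_j}))$. You instead derive it from the $LDL^T$ decomposition: since the change of basis is unipotent and fixes $\gamma_0$, both $\det G$ and the alternating triple trace are unchanged, and in the orthogonal basis the relation $\tilde{\gamma}_3=c\tilde{\gamma}_1\tilde{\gamma}_2$ gives both quantities explicitly in terms of $c,d_1,d_2$; your arithmetic here checks out ($\det D=2d_1d_2d_3=c^2(d_1d_2)^2=(cd_1d_2)^2$). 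This is a self-contained, more elementary route that in effect reproves Voight's lemma in this special case, at the cost of invoking the orthogonalization machinery of Section~\ref{sec:GS}; the paper's citation is shorter, and the paper defers the Gram--Schmidt comparison to the subsequent compatibility discussion rather than using it in the proof itself. Both arguments are sound, and your treatment of the sign ambiguity (the two square roots giving opposite algebras) is at the same level of detail as the paper's.
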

\begin{proof}

We have $\gamma_0^2=1$, and for $r\not=0$, we have 
$\Trd(\gamma_r)=0$ so $\gamma_r^2=\Trd(\gamma_r)\gamma_r -\Nrd(\gamma_r)=-\deg(\gamma_r)$.
Therefore 
\[ m_{000}=1,\quad   m_{rrt}= \begin{cases} -\frac{G_{rr}}{2} &: t=0 \text{ and } 1\leq r\leq 3\\ 
0 &: 0\leq r\leq 3 \text{ and } 1\leq t\leq 3.
\end{cases}
\]
Also, note that 
\[
\gamma_r\gamma_s=\widehat{\widehat{\gamma_s}\widehat{\gamma_r}} = \widehat{\gamma_s\gamma_r}=\Trd(\gamma_s\gamma_r)-\gamma_s\gamma_r,\]
so if $\gamma_r\gamma_s=\sum_tm_{rst}\gamma_t$ then 
\[
\gamma_s\gamma_r=\Trd(\gamma_r\gamma_s)-m_{rs0} -\sum_{t=1}^3m_{rst}\gamma_{t}.
\]
Therefore 
\[
m_{srt}=\begin{cases}
\Trd(\gamma_r\gamma_s)-m_{rs0}
&: t=0 \\
-m_{rst} &: 1\leq t \leq 3.
\end{cases}
\]
We conclude that it suffices to calculate $m_{rst}$ for $1\leq r<s\leq 3$. 

By pairing both sides of $\gamma_r\gamma_s=\sum_{t=0}^3m_{rst}\gamma_t$ against $\gamma_k$ for $k=0,1,2,3$, we obtain for each pair $(r,s)$ satisfying $1\leq r<s\leq3$ a system of four equations in the 
indeterminates $m_{rs0}, m_{rs1}, m_{rs2}, m_{rs3}$:
\begin{equation}\label{eqn:multtableeqn}
\Trd(\gamma_r\gamma_s\widehat{\gamma_k}) = \sum_{t=0}^3 m_{rst} \Trd(\gamma_t\widehat{\gamma_k}).
\end{equation}

We will show that  the entries of $G$  determine the left-hand side of Equation~(\ref{eqn:multtableeqn}) uniquely (up to a choice of a square root of $\det(G)$). 
We compute the left hand side of Equation~(\ref{eqn:multtableeqn}) for each $0\leq r<s\leq 3$, $0\leq k\leq 3$. We have that 
\[
\Trd(\gamma_r\gamma_s\widehat{\gamma_r})=\deg(\gamma_r)\Trd(\gamma_s)=\frac{1}{2}\Trd(\gamma_r\widehat{\gamma_r})\Trd(\gamma_s\widehat{1})=\frac{1}{2}G_{rr}G_{s1},
\]
and similarly
$\Trd(\gamma_r\gamma_s\widehat{\gamma_s})=\Trd(\gamma_r)\deg(\gamma_s)=\frac{1}{2}G_{ss}G_{r1}$, for $1\leq r<s\leq 3$. 
% If $t=0,r,$ or $s$ then $\Trd(\gamma_r\gamma_s\widehat{\gamma_t})$ is a polynomial in the entries of $G$.
Finally, since $\gamma_0=1$,
\[
\Trd(\gamma_r\gamma_s\widehat{\gamma_0})=\Trd(\gamma_r\gamma_s)=-\Trd(\gamma_r\widehat{\gamma_s})=-G_{rs}.
\]
We are left with the case that $\{r,s,k\}$ is a permutation of $\{1,2,3\}$. First, we calculate $\Trd(\gamma_1\gamma_2\widehat{\gamma_3})$. 
For that, we recall the following trilinear form on 
the quaternion algebra $\End^0(E)$: for elements $\alpha_1,\alpha_2,\alpha_3\in \End^0(E)$, define 
\[
m(\alpha_1,\alpha_2,\alpha_3)=\Trd((\alpha_1\alpha_2-\alpha_2\alpha_1)\widehat{\alpha_3}).
\] 
Using the fact that $\widehat{\gamma_i}=-\gamma_i$ and that for elements $\alpha,\beta \in B$ we have $\Trd(\alpha\beta)=\Trd(\beta\alpha)$ and 
$\Trd(\widehat{\alpha})=\Trd(\alpha)$, a calculation shows 
\[
m(\gamma_1,\gamma_2,\gamma_3) = \Trd((\gamma_1\gamma_2-\gamma_2\gamma_1)\widehat{\gamma_3}) = 2\Trd(\gamma_1\gamma_2\widehat{\gamma_3}).
\]The proof of Lemma~15.4.7 in~\cite{Voight} shows that, for any elements $\alpha_0=1,\alpha_1,\alpha_2,\alpha_3$ in a quaternion algebra $B$, we have 
\[
m(\alpha_1,\alpha_2,\alpha_3)^2 =\det((\Trd(\alpha_i\widehat{\alpha_j}))_{0\leq i,j\leq 3}).
\]
We conclude that $m(\gamma_1,\gamma_2,\gamma_3)^2=\det(G)$. We have that $m(\gamma_{\sigma(1)},\gamma_{\sigma(2)},\gamma_{\sigma(3)})=\sgn(\sigma)m(\gamma_1,\gamma_2,\gamma_3)$ for any $\sigma \in S_3$, e.g. by checking this for the three transpositions of $S_3$. The upshot is that we can make a consistent choice of values for 
$\Trd(\gamma_{\sigma(1)}\gamma_{\sigma(2)}\widehat{\gamma_{\sigma(3)}})$ by choosing, for example, $\Trd(\gamma_1\gamma_2\widehat{\gamma_3})=\frac{1}{2}\sqrt{\det(G)}$ and then setting
\[
\Trd(\gamma_{\sigma(1)}\gamma_{\sigma(2)}\widehat{\gamma_{\sigma(3)}})=\frac{\sgn(\sigma)}{2}\sqrt{\det(G)}.
\]
With linear algebra over $\QQ$, we solve the above three systems of four equations to compute all coefficients $m_{rst}$ with $1\leq r<s\leq 3$.  With our earlier calculations, this determines a complete multiplication table which gives $\Lambda\otimes \QQ$ the structure of a quaternion algebra whose underlying quadratic space is isomorphic to $(\QQ^4,G)\cong (\End^0(E),\Nrd)$. 
\end{proof}
\begin{rmk}
    We encounter the same phenomenon we observed in Section~\ref{sec:GS}: we must choose a sign for a square root to determine the multiplication table. The choice of sign of a square root of $\det(G)$ corresponds to the choice of an 
isomorphism of the quaternion algebra $\Lambda\otimes \QQ$ equipped with the multiplication table $\{m_{rst}\}$ with either $\End^0(E)$ or $(\End^0(E))^{\text{op}}$. 
\end{rmk}

\begin{rmk}
We could eliminate this ambiguity by computing $\Trd(\gamma_1\gamma_2\widehat{\gamma_3})$ directly via Schoof's algorithm. 
\end{rmk}
\subsubsection{Gram--Schmidt versus multiplication tables}
One may ask if the approaches in Sections~\ref{sec:GS} and~\ref{sec:multtable} for obtaining a quaternion algebra from the basis $\{\gamma_0,\gamma_1,\gamma_2,\gamma_3\}$ with Gram matrix $G$ are compatible. This is the case: first of all, $G$ determines the 
structure of $\End^0(E)$ as a quadratic space, and by \cite[Proposition 5.2.4]{Voight}, there are only two (up to isomorphism) quaternion algebras with underlying quadratic spaces isomorphic to $(\End^0(E),\Nrd)\cong (\QQ^4,G)$. We can make this explicit, and in fact the choices of square root in each approach are consistent with one another. 

Let $LDL^T=G$ with $D$ diagonal and $L$ lower-triangular with $1$'s on its diagonal. Let $\{\tilde{\gamma_0},\tilde{\gamma_1},\tilde{\gamma_2},\tilde{\gamma_3}\}$ and $a,b,c,d_1,d_2,d_3\in \QQ$ be defined as in Section~\ref{sec:GS}. Then by~\cite[15.4.5]{Voight},
\[
m(\tilde{\gamma_1},\tilde{\gamma_2},\tilde{\gamma_3}) = \det(L)m(\gamma_1,\gamma_2,\gamma_3)=m(\gamma_1,\gamma_2,\gamma_3).
\]
On the other hand, we have $\tilde{\gamma_3}=c\tilde{\gamma_1}\tilde{\gamma_2}$, so 
\[
m(\tilde{\gamma_1},\tilde{\gamma_2},\tilde{\gamma_3}) = 4abc,
\] 
and $4ab>0$, so the sign of $m(\gamma_1,\gamma_2,\gamma_3)$ and the sign of $c$ are equal. The choice of sign for a square root of $\det(G)=m(\gamma_1,\gamma_2,\gamma_3)^2$ is therefore consistent with a choice of 
sign of the square root of 
\[
\frac{2d_3}{d_1d_2} = \frac{\det(G)}{16a^2b^2} = \frac{1}{(4ab)^2}\left(m(\tilde{\gamma_1},\tilde{\gamma_2},\tilde{\gamma_3})\right)^2.
\]

\subsection{Computing an order \texorpdfstring{$\OO$}{O} in \texorpdfstring{$\ZZ+P$}{Z+P}}
We assume that we have computed three inseparable endomorphisms $\gamma_1,\gamma_2,\gamma_3$ such that $\gamma_0\coloneqq1$ and $\gamma_1,\gamma_2,\gamma_3$ generate a lattice $\Lambda$ inside $\End(E)$, along with the Gram matrix $G=(\Trd(\gamma_i\widehat{\gamma_j}))_{0\leq i,j\leq 3}$ and isomorphisms of quadratic spaces $f\colon (\QQ^4,G) \to H(a,b)$ and $g\colon (\End^0(E),\Nrd) \to (\QQ^4,G)$, where $g(\gamma_r)=e_r$, the $r$th standard basis vector of $\QQ^4$. For $0\leq r,s,t\leq 3$, let $m_{rst}\in \QQ$ be  the elements of the multiplication table for the basis $\B=\{\gamma_0,\gamma_1,\gamma_2,\gamma_3\}$: for $0\leq r,s\leq 3$, we have 
\[
\gamma_r\gamma_s = \sum_{t=0}^3 m_{rst}\gamma_t.
\]  Let $M_r$ be the matrix $M_r=(m_{rst})_{0\leq s,t\leq 3}$. From this data, we can compute a basis for $\OO$, the minimal order in $\End(E)$ containing $\Lambda$. The order $\OO$ is generated as a $\ZZ$-module by $\gamma_0,\gamma_1,\gamma_2,$ and $\gamma_3$ and their products. 

We compute a basis for $\OO$ in which basis elements are represented as linear combinations of the $\gamma_i$ as follows. Let $M_{rs}$ denote the $s$th row of $M_r$. Define the $12\times 4$ matrix $A$ to have rows given by the rows of $M_0$, i.e. the $4\times 4$ identity matrix, followed by $M_{rs}$ for $0<r<s\leq 3$. Let $H$ be the  Hermite normal form of $A$. Let $B=(b_{ij})_{0\leq i,j\leq 3}\in M_4(\ZZ)$ be the matrix whose rows are the top four rows of $H$. The rows of $B$ form a lattice $L$ in $\QQ^4$ such that $g^{-1}(L)= \OO$. In particular, if we define  $\beta_i=\sum_{j=0}^3 b_{ij}\gamma_i$ for $0\leq i \leq 3$, then $\{\beta_0=1,\beta_1,\beta_2,\beta_3\}$ is a $\ZZ$-basis for $\OO$. 

\subsection{Computing \texorpdfstring{$\ZZ+P$}{Z+P}}
We now assume that we have computed a suborder $\OO$ of $\ZZ+P$ generated by $\gamma_0=1$ and three inseparable endomorphisms $\gamma_1,\gamma_2,\gamma_3$, where $\OO$ is represented by  four vectors $\{(b_{ij})_{0\leq j \leq 3}\}_{0\leq i \leq 3}$ in $\QQ^4$ such that 
$\beta_i\coloneqq \sum_{j=0}^3b_{ij}\gamma_j$ form a $\ZZ$-basis for $\OO$. We proceed to compute $\ZZ+P$ by iteratively computing an additional inseparable endomorphism $\gamma$ and the order $\OO[\gamma]$, defined to be the smallest order containing both $\OO$ and $\gamma$. It suffices to compute a basis for the $\mathbb{Z}$ lattice spanned by $\beta_0,\ldots,\beta_3,\beta_0\gamma,\ldots,\beta_3\gamma$.   The approach is similar to how we computed an order generated by a lattice basis in the previous subsection. We first compute $(c_0,\ldots,c_3)\in \QQ^4$ such that 
\[
\gamma=\sum_{s=0}^3 c_s\gamma_s
\]
by computing the traces $t_r\coloneqq \Trd(\gamma_r\widehat{\gamma})$ for $0\leq r\leq 3$ and then solving the system of equations
\[
t_r = \sum_{s=0}^3 c_s G_{rs}.
\]
Define $M_{\gamma}\coloneqq \sum_{r=0}^3 c_rM_r$. Then the matrix 
\[
M_{\gamma}' \coloneqq H^{-1}MH
\]
gives the action of left multiplication of $\gamma$ on the basis elements $b_r=\sum_{s=0} H_{rs}\gamma_s$ for $\OO$. Let $A$ be the matrix whose rows are the rows of $H$ and the rows of $M_{\gamma'}$. The top four rows of the Hermite normal form of $A$ yield a basis for a lattice $L_{\OO[\gamma]}$ in $\QQ^4$ such that $g^{-1}(L_{\OO[\gamma]})=\OO$. We then define $B$ to be the top four rows of $H$. 

We remark that we can check whether the order $\OO$ given by the matrix $B$ is equal to $\ZZ+P$ by simply computing its discriminant and checking if the discriminant is $p^4$. The discriminant of $\OO$ is the determinant of the Gram matrix $B^TGB$. 

\subsection{From \texorpdfstring{$\ZZ+P$}{Z+P} to \texorpdfstring{$\End(E)$}{End(E)}}\label{sec:voight-algorithm}
Assume that we have computed three inseparable endomorphisms $\gamma_i$ for $1\leq i \leq 3$, the Gram matrix $G=(\Trd(\gamma_i\widehat{\gamma_j}))$, and a matrix $B=(b_{ij})\in M_4(\QQ)$ so that $\beta_i\coloneqq \sum_{j=0}^3 b_{ij}\gamma_i$ form the $\ZZ$-basis for $\ZZ+P$. Then we have so far computed a basis for the unique order of index $p$ in $\End(E)$, according to Proposition~\ref{prop:z+p}. Using results and algorithms in~\cite{Voight}, we only need a little linear algebra to efficiently compute a basis for $\End(E)$. We recall the notion of a $p$-saturated order from~\cite{Voight} below, and show that in our case, a $p$-saturated order containing $\ZZ+P$ is $\End(E)$. 
\begin{defn}
Let $p$ be an odd prime. An order $\mathcal O\subseteq B$ is said to be $p$\emph{-saturated}  if $\mathcal O_p\coloneqq\mathcal O\otimes\mathbb Z_p$ has a basis $x_1,x_2,x_3,x_4$ such that the quadratic form $\Nrd\colon \OO_p \to \QQ_p$ is diagonal with respect to that basis and such that $v_{p}(\Nrd(x_i))\leq 1 $ for all $1\leq i\leq 4$.
An order $\mathcal O\subseteq B$ is said to be $p$\emph{-maximal} for a prime $p$ if $\mathcal O_p\coloneqq\mathcal O\otimes\mathbb Z_p$ is maximal in $B\otimes\mathbb Q_p$. 
\end{defn}

The following proposition shows that for quaternion algebras over $\QQ$ ramified at $p$, orders that are $p$-saturated must also be $p$-maximal.

\begin{prop}\label{p-saturated}
Let $B$ be a quaternion algebra over $\QQ$ ramified at $p$. If $\mathcal O\subseteq B$ is a $\ZZ$-order which is $p$-saturated, then $\mathcal O$ is $p$-maximal.
\end{prop}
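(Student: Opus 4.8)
The plan is to work $p$-adically. Write $B_p := B \otimes_{\QQ} \QQ_p$ and $\OO_p := \OO \otimes_{\ZZ} \ZZ_p$; since $B$ is ramified at $p$, $B_p$ is the unique quaternion division algebra over $\QQ_p$, and I will write $\OO_{\max}$ for its unique maximal $\ZZ_p$-order (the valuation ring) with maximal two-sided ideal $\mm$. First I would record two standard local facts, citing~\cite[Ch.~13, Lemma~10.4.3]{Voight}. (i) Every $\ZZ_p$-order of $B_p$ is contained in $\OO_{\max}$: any element $x$ of such an order is integral over $\ZZ_p$, so $\Nrd(x)\in\ZZ_p$, hence $x$ lies in the valuation ring $\OO_{\max}=\{x : v_p(\Nrd(x))\ge 0\}$. (ii) Since the valuation on $B_p$ extending $v_p$ is $\tfrac12\, v_p\circ\Nrd$, we have $\mm=\{x\in\OO_{\max} : v_p(\Nrd(x))>0\}$. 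I would also note $\discrd(\OO_{\max})=\disc(B_p)=p$, so $v_p(\disc(\OO_{\max}))=2$.

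Next I would translate the $p$-saturated hypothesis into discriminant language. Let $x_1,x_2,x_3,x_4$ be a $\ZZ_p$-basis of $\OO_p$ with respect to which $\Nrd$ is diagonal and $v_p(\Nrd(x_i))\le 1$ for all $i$. Because $p$ is odd, the Gram matrix of the trace pairing $(\alpha,\beta)\mapsto\Trd(\alpha\bar\beta)$ in this basis is $\operatorname{diag}\!\big(2\Nrd(x_1),2\Nrd(x_2),2\Nrd(x_3),2\Nrd(x_4)\big)$, since $\Trd(x_i\bar x_j)=0$ for $i\ne j$ (polarization of a diagonal quadratic form) and $\Trd(x_i\bar x_i)=2\Nrd(x_i)$. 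Hence
\[
v_p\big(\disc(\OO_p)\big)=\sum_{i=1}^{4} v_p(\Nrd(x_i))\le 4 .
\]

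Now I would argue by contradiction. Suppose $\OO$ is not $p$-maximal, so $\OO_p\subsetneq\OO_{\max}$ by fact (i), and hence $p \mid [\OO_{\max}:\OO_p]$. By~\cite[Lemma~15.2.15]{Voight}, $\disc(\OO_p)=[\OO_{\max}:\OO_p]^2\disc(\OO_{\max})$, so $v_p(\disc(\OO_p))\ge 2+2=4$. Combined with the bound above, $v_p(\disc(\OO_p))=4$, which forces $v_p(\Nrd(x_i))=1$ for every $i$. By fact (ii), each $x_i\in\mm$, so $\OO_p=\sum_i\ZZ_p x_i\subseteq\mm$; but $1\in\OO_p$ while $v_p(\Nrd(1))=0$, so $1\notin\mm$, a contradiction. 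Therefore $\OO_p=\OO_{\max}$, i.e.\ $\OO$ is $p$-maximal.

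The argument is short; the only point needing care is the bookkeeping with the two local facts about the division algebra $B_p$ — the identification of $\OO_{\max}$ with the valuation ring and of $\mm$ via $v_p\circ\Nrd$ — together with the observation that the $p$-saturated basis makes $v_p(\disc(\OO_p))$ exactly $\sum_i v_p(\Nrd(x_i))$. Everything else is elementary.
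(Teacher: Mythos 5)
Your proof is correct, and it takes a genuinely different route from the one in the paper. Both arguments start the same way, reading off $v_p(\disc(\OO_p))$ from the diagonal Gram matrix $\operatorname{diag}(2\Nrd(x_i))$ of a saturated basis (using that $p$ is odd). From there the paper normalizes the basis so that $x_0=1$, gets $v_p(\disc(\OO_p))=e_1+e_2+e_3\leq 3$, and then invokes the fact that the discriminant of an order is the square of an ideal to force $e_1+e_2+e_3\in\{0,2\}$; ramification at $p$ rules out $0$, so $\discrd(\OO_p)=p\ZZ_p$ and maximality follows. You instead keep a general diagonal basis (so your bound is $v_p(\disc(\OO_p))\leq 4$), compare $\OO_p$ against the unique maximal order $\OO_{\max}$ of the division algebra via the index–discriminant formula, and derive a contradiction from the observation that equality would put every basis vector, hence all of $\OO_p$, inside the maximal ideal $\mm$ even though $1\in\OO_p$. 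Your version has the small advantage of working with the definition of $p$-saturated exactly as stated (it does not need the basis to contain $1$, which the paper's ``normalized basis'' step quietly assumes), at the cost of importing two structural facts about the local division algebra --- that every order sits inside the valuation ring and that $\mm$ is cut out by $v_p(\Nrd(\cdot))>0$ --- where the paper only needs the squareness of the discriminant and the ramification hypothesis. Both are complete; the paper's is marginally shorter, yours is marginally more self-justifying.
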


\begin{proof}
Let $x_0=1,x_1,x_2,x_3$ be a normalized basis of $\OO_p\coloneqq\OO\otimes\mathbb Z_p$ with respect to the quadratic form $\Nrd$ such that $e_i\coloneqq v_{p}(\Nrd(x_i))\leq 1 $ for $i=1,2,3$ and $e_1\leq e_2\leq e_3$.

Then \begin{align*}
\disc(\OO_p) &=
 \det(\Trd(x_i\overline{x_j}))\ZZ_p \\
 &=\det \begin{pmatrix}
    2 & 0 & 0 & 0 \\
    0 & u_1p^{e_1} & 0 & 0\\ 
    0 & 0 & u_2p^{e_2} & 0\\ 
    0 & 0 & 0 & u_3p^{e_3}\\ 
  \end{pmatrix} \ZZ_p \\ 
  &=  p^{e_1+e_2+e_3} \mathbb Z_p\supseteq p^3\ZZ_p, 
  \end{align*}
  where $u_1,u_2,u_3\in \mathbb Z_p^{\times}.$
The discriminant of $\OO_p$ is the square of an ideal in $\mathbb Z_p$, so $e_1+e_2+e_3$ has to be even and therefore is either $0$ or $2$. The first case is not possible since $B$ is ramified at $p$. This implies that $v_{p}(\discrd(\OO_p))=1=v_p(\disc(B))$, so we conclude $\mathcal O \subseteq B$ is $p$-maximal.
\end{proof}

\begin{cor}\label{cor-saturated}
Let $\mathcal O\subseteq \End^0(E)$ be a $p$-saturated order such that $\ZZ+P \subseteq\mathcal O$. Then $\mathcal O=\End(E).$
\end{cor}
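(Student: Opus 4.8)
The plan is to combine Proposition~\ref{p-saturated} with the uniqueness statement in Proposition~\ref{prop:z+p}. Recall that $\End(E)$ is a maximal order in $\End^0(E)\cong B_{p,\infty}$, that $\End^0(E)$ is ramified at $p$, and that $P$ is the (unique) two-sided ideal of $\End(E)$ of reduced norm $p$; so Proposition~\ref{prop:z+p} applies with $\OO=\End(E)$ and tells us that $\End(E)$ is the \emph{unique} maximal order of $\End^0(E)$ containing $\ZZ+P$.

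First I would apply Proposition~\ref{p-saturated} to the quaternion algebra $B=\End^0(E)$, which is ramified at $p$: since $\OO$ is $p$-saturated, it is $p$-maximal, i.e. $\OO\otimes\ZZ_p$ is a maximal $\ZZ_p$-order in $B\otimes\QQ_p$. Next I would show $\OO$ is maximal at every other prime. For $\ell\neq p$, localize the inclusion $\ZZ+P\subseteq\OO$ to get $(\ZZ+P)\otimes\ZZ_\ell\subseteq\OO\otimes\ZZ_\ell$; but the proof of Proposition~\ref{prop:z+p} established that $\ZZ+P$ is already maximal at every prime $\ell\neq p$ (its reduced discriminant is $pD$, so $v_\ell(\discrd(\ZZ+P))=v_\ell(D)=v_\ell(\disc B)$). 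Hence $(\ZZ+P)\otimes\ZZ_\ell$ is a maximal $\ZZ_\ell$-order, and being contained in $\OO\otimes\ZZ_\ell$ forces $\OO\otimes\ZZ_\ell=(\ZZ+P)\otimes\ZZ_\ell$, which is maximal. Since $\OO$ is maximal at $p$ and at every $\ell\neq p$, maximality being a local property~\cite[Lemma 10.4.3]{Voight}, $\OO$ is a maximal order in $\End^0(E)$.

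Finally, $\OO$ is a maximal order containing $\ZZ+P$, and Proposition~\ref{prop:z+p} says there is exactly one such maximal order, namely $\End(E)$; therefore $\OO=\End(E)$. There is essentially no obstacle here: the only thing to be careful about is invoking Proposition~\ref{p-saturated} correctly (checking its hypothesis that the ambient quaternion algebra is ramified at $p$, which holds for $\End^0(E)$ of a supersingular curve over $\FF_{p^2}$) and making sure the local-global principle for maximality is being applied to conclude global maximality before quoting the uniqueness from Proposition~\ref{prop:z+p}.
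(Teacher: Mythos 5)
Your proof is correct and follows essentially the same route as the paper's: establish $p$-maximality via Proposition~\ref{p-saturated}, $\ell$-maximality for $\ell\neq p$ from the local maximality of $\ZZ+P$, conclude global maximality by the local-global principle, and finish with the uniqueness statement of Proposition~\ref{prop:z+p}. If anything, your treatment of the primes $\ell\neq p$ is slightly more careful than the paper's, which invokes $\mathcal O\subseteq\End(E)$ (not literally among the hypotheses), whereas you deduce $\OO\otimes\ZZ_\ell=(\ZZ+P)\otimes\ZZ_\ell$ directly from the maximality of the latter.
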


\begin{proof}
By Proposition~\ref{prop:z+p}, the order $\mathbb Z+P$  is locally maximal at all primes $\ell\neq p$. Since $\ZZ+P \subseteq\mathcal O\subseteq \End(E)$, the order $\mathcal O$ is also $\ell$-maximal for all $\ell\neq p$.
Moreover $\mathcal O$ is $p$-saturated, so $\mathcal O$  is $p$-maximal by Proposition~\ref{p-saturated}. This implies that $\mathcal O$ is maximal in $\End^0(E)$ and, since $\ZZ+P \subseteq\mathcal O$, by Proposition $\ref{prop:z+p}$ we have $\mathcal O=\End(E)$.
\end{proof}

Therefore given the order $\mathbb Z+P$, we can recover  the maximal order containing $\mathbb Z+P $  by  computing the $p$-saturated order that contains $\mathbb Z+P$. This is done efficiently with Algorithm 3.12 and Algorithm 7.9 in~\cite{Voight}.

\newcommand{\etalchar}[1]{$^{#1}$}


\begin{thebibliography}{BCNE{\etalchar{+}}19}

\bibitem[ACNL{\etalchar{+}}23]{ACLLNSS}
Sarah Arpin, Catalina Camacho-Navarro, Kristin Lauter, Joelle Lim, Kristina
  Nelson, Travis Scholl, and Jana Sotáková.
\newblock Adventures in supersingularland.
\newblock {\em Experimental Mathematics}, 32(2):241--268, 2023.

\bibitem[BCC{\etalchar{+}}23]{secuer}
Andrea Basso, Giulio Codogni, Deirdre Connolly, Luca De~Feo, Tako~Boris
  Fouotsa, Guido~Maria Lido, Travis Morrison, Lorenz Panny, Sikhar Patranabis,
  and Benjamin Wesolowski.
\newblock Supersingular curves you can trust.
\newblock In {\em Advances in Cryptology – EUROCRYPT 2023: 42nd Annual
  International Conference on the Theory and Applications of Cryptographic
  Techniques, Lyon, France, April 23–27, 2023, Proceedings, Part II}, page
  405–437, Berlin, Heidelberg, 2023. Springer-Verlag.

\bibitem[BCNE{\etalchar{+}}19]{BCEMP}
Efrat Bank, Catalina Camacho-Navarro, Kirsten Eisentr{\"a}ger, Travis Morrison,
  and Jennifer Park.
\newblock Cycles in the supersingular $\ell$-isogeny graph and corresponding
  endomorphisms.
\newblock In Jennifer~S. Balakrishnan, Amanda Folsom, Matilde Lal{\'i}n, and
  Michelle Manes, editors, {\em Research Directions in Number Theory}, pages
  41--66, Cham, 2019. Springer International Publishing.

\bibitem[BLS12]{BLS}
Reinier Br\"{o}ker, Kristin Lauter, and Andrew~V. Sutherland.
\newblock Modular polynomials via isogeny volcanoes.
\newblock {\em Math. Comp.}, 81(278):1201--1231, 2012.

\bibitem[Br{\"o}09]{Brok09}
Reinier Br{\"o}ker.
\newblock Constructing supersingular elliptic curves.
\newblock {\em J. Comb. Number Theory}, 1(3):269--273, 2009.

\bibitem[Brz90]{Brz90}
J.~Brzezinski.
\newblock On automorphisms of quaternion orders.
\newblock {\em J. Reine Angew. Math.}, 403:166--186, 1990.

\bibitem[BS11]{BS2011}
Gaetan Bisson and Andrew~V. Sutherland.
\newblock Computing the endomorphism ring of an ordinary elliptic curve over a
  finite field.
\newblock {\em J. Number Theory}, 131(5):815--831, 2011.

\bibitem[CGL09]{CGL2009}
Denis~X. Charles, Eyal~Z. Goren, and Kristin Lauter.
\newblock Cryptographic hash functions from expander graphs.
\newblock {\em J. Cryptology}, 22(1):93--113, 2009.

\bibitem[CKMZ22]{CKMZ22}
T\'{\i}mea Csah\'{o}k, P\'{e}ter Kutas, Micka\"{e}l Montessinos, and Gergely
  Z\'{a}br\'{a}di.
\newblock Explicit isomorphisms of quaternion algebras over quadratic global
  fields.
\newblock {\em Res. Number Theory}, 8(4):Paper No. 77, 24, 2022.

\bibitem[CS21]{CS21}
Mathilde Chenu and Benjamin Smith.
\newblock {Higher-degree supersingular group actions}.
\newblock {\em {Mathematical Cryptology}}, 2021.

\bibitem[CSCS22]{CCS22}
Maria Corte{-}Real~Santos, Craig Costello, and Jia Shi.
\newblock Accelerating the delfs-galbraith algorithm with fast subfield root
  detection.
\newblock In Yevgeniy Dodis and Thomas Shrimpton, editors, {\em Advances in
  Cryptology - {CRYPTO} 2022 - 42nd Annual International Cryptology Conference,
  {CRYPTO} 2022, Santa Barbara, CA, USA, August 15-18, 2022, Proceedings, Part
  {III}}, volume 13509 of {\em Lecture Notes in Computer Science}, pages
  285--314. Springer, 2022.

\bibitem[CSV21]{CSV}
Sara Chari, Daniel Smertnig, and John Voight.
\newblock On basic and {B}ass quaternion orders.
\newblock {\em Proc. Amer. Math. Soc. Ser. B}, 8:11--26, 2021.

\bibitem[DFKL{\etalchar{+}}20]{sqisign}
Luca De~Feo, David Kohel, Antonin Leroux, Christophe Petit, and Benjamin
  Wesolowski.
\newblock {SQISign}: compact post-quantum signatures from quaternions and
  isogenies.
\newblock In {\em Advances in cryptology -- ASIACRYPT 2020. 26th international
  conference on the theory and application of cryptology and information
  security, Daejeon, South Korea, December 7--11, 2020. Proceedings. Part I},
  pages 64--93. Cham: Springer, 2020.

\bibitem[DG16]{DG16}
Christina Delfs and Steven~D. Galbraith.
\newblock Computing isogenies between supersingular elliptic curves over
  $\mathbb{F}_p$.
\newblock {\em Des. Codes Cryptography}, 78(2):425--440, February 2016.

\bibitem[EHL{\etalchar{+}}18]{EHLMP}
Kirsten Eisentr{\"a}ger, Sean Hallgren, Kristin Lauter, Travis Morrison, and
  Christophe Petit.
\newblock Supersingular isogeny graphs and endomorphism rings: Reductions and
  solutions.
\newblock In Jesper~Buus Nielsen and Vincent Rijmen, editors, {\em Advances in
  Cryptology -- EUROCRYPT 2018}, pages 329--368, Cham, 2018. Springer
  International Publishing.

\bibitem[EHL{\etalchar{+}}20]{EHLMP20}
Kirsten Eisentr\"{a}ger, Sean Hallgren, Chris Leonardi, Travis Morrison, and
  Jennifer Park.
\newblock Computing endomorphism rings of supersingular elliptic curves and
  connections to path-finding in isogeny graphs.
\newblock In {\em A{NTS} {XIV}---{P}roceedings of the {F}ourteenth
  {A}lgorithmic {N}umber {T}heory {S}ymposium}, volume~4 of {\em Open Book
  Ser.}, pages 215--232. Math. Sci. Publ., Berkeley, CA, 2020.

\bibitem[Eic36]{Eic36}
Martin Eichler.
\newblock Untersuchungen in der {Z}ahlentheorie der rationalen
  {Q}uaternionenalgebren.
\newblock {\em J. Reine Angew. Math.}, 174:129--159, 1936.

\bibitem[Elk98]{Elk95}
Noam~D. Elkies.
\newblock Elliptic and modular curves over finite fields and related
  computational issues.
\newblock In {\em Computational perspectives on number theory ({C}hicago, {IL},
  1995)}, volume~7 of {\em AMS/IP Stud. Adv. Math.}, pages 21--76. Amer. Math.
  Soc., Providence, RI, 1998.

\bibitem[EPSV23]{EPSV}
Jonathan~Komada Eriksen, Lorenz Panny, Jana Sotáková, and Mattia Veroni.
\newblock Deuring for the people: Supersingular elliptic curves with prescribed
  endomorphism ring in general characteristic.
\newblock Cryptology ePrint Archive, Paper 2023/106, 2023.
\newblock \url{https://eprint.iacr.org/2023/106}.

\bibitem[ES24]{ES24}
Kirsten Eisentr{\"a}ger and Gabrielle Scullard.
\newblock Connecting {K}ani's lemma and path-finding in the bruhat-tits tree to
  compute supersingular endomorphism rings, 2024.
\newblock \url{https://arxiv.org/abs/2402.05059}.

\bibitem[Gal12]{GalPKC}
Steven~D. Galbraith.
\newblock {\em Mathematics of public key cryptography}.
\newblock Cambridge University Press, Cambridge, 2012.

\bibitem[GPS17]{GPS}
Steven~D. Galbraith, Christophe Petit, and Javier Silva.
\newblock Identification protocols and signature schemes based on supersingular
  isogeny problems.
\newblock In {\em Advances in cryptology---{ASIACRYPT} 2017. {P}art {I}},
  volume 10624 of {\em Lecture Notes in Comput. Sci.}, pages 3--33. Springer,
  2017.

\bibitem[HvdH21]{HarveyH21}
David Harvey and Joris van~der Hoeven.
\newblock Integer multiplication in time {$O(n \log n)$}.
\newblock {\em Ann. of Math. (2)}, 193(2):563--617, 2021.

\bibitem[HW08]{HW08}
Godfrey~H. Hardy and Edward~M. Wright.
\newblock {\em An introduction to the theory of numbers}.
\newblock Oxford University Press, Oxford, sixth edition, 2008.

\bibitem[KLPT14]{KLPT}
David Kohel, Kristin Lauter, Christophe Petit, and Jean-Pierre Tignol.
\newblock On the quaternion {$\ell$}-isogeny path problem.
\newblock {\em LMS Journal of Computation and Mathematics}, 17:418--432, 2014.

\bibitem[Koh96]{Koh96}
David Kohel.
\newblock {\em Endomorphism rings of elliptic curves over finite fields}.
\newblock PhD thesis, University of California, Berkeley, 1996.

\bibitem[Lit28]{Lit28}
John~E. Littlewood.
\newblock On the class-number of the corpus {$P(\sqrt{-k})$}.
\newblock {\em Proc. London Math. Soc. (2)}, 27(5):358--372, 1928.

\bibitem[LP92]{LP92}
H.~W. Lenstra, Jr. and Carl Pomerance.
\newblock A rigorous time bound for factoring integers.
\newblock {\em J. Amer. Math. Soc.}, 5(3):483--516, 1992.

\bibitem[Mes86]{Mes86}
J.-F. Mestre.
\newblock La m\'ethode des graphes. {E}xemples et applications.
\newblock In {\em Proceedings of the international conference on class numbers
  and fundamental units of algebraic number fields ({K}atata, 1986)}, pages
  217--242. Nagoya Univ., Nagoya, 1986.

\bibitem[MSS16]{morain:hal-01320388}
Fran{\c c}ois Morain, Charlotte Scribot, and Benjamin Smith.
\newblock {Computing cardinalities of Q-curve reductions over finite fields}.
\newblock {\em {LMS Journal of Computation and Mathematics}}, 19(A):15, August
  2016.

\bibitem[Piz80a]{Piz80}
Arnold Pizer.
\newblock An algorithm for computing modular forms on {$\Gamma _{0}(N)$}.
\newblock {\em J. Algebra}, 64(2):340--390, 1980.

\bibitem[Piz80b]{Piz80theta}
Arnold Pizer.
\newblock Theta series and modular forms of level {$p^{2}M$}.
\newblock {\em Compositio Math.}, 40(2):177--241, 1980.

\bibitem[PW23]{PW24}
Aurel Page and Benjamin Wesolowski.
\newblock The supersingular endomorphism ring and one endomorphism problems are
  equivalent.
\newblock Cryptology ePrint Archive, Paper 2023/1399, 2023.
\newblock \url{https://eprint.iacr.org/2023/1399}.

\bibitem[Rab80]{Rabin80}
Michael~O. Rabin.
\newblock Probabilistic algorithms in finite fields.
\newblock {\em SIAM J. Comput.}, 9(2):273--280, 1980.

\bibitem[Rob22]{Rob23applications}
Damien Robert.
\newblock Some applications of higher dimensional isogenies to elliptic curves
  (overview of results).
\newblock Cryptology ePrint Archive, Paper 2022/1704, 2022.
\newblock \url{https://eprint.iacr.org/2022/1704}.

\bibitem[Sch95]{SchoofSEA}
Ren\'{e} Schoof.
\newblock Counting points on elliptic curves over finite fields.
\newblock {\em J. Th\'{e}or. Nombres Bordeaux}, 7(1):219--254, 1995.
\newblock Les Dix-huiti\`emes Journ\'{e}es Arithm\'{e}tiques (Bordeaux, 1993).

\bibitem[Sil09]{AEC}
Joseph~H. Silverman.
\newblock {\em The arithmetic of elliptic curves}.
\newblock Springer, New York, 2009.

\bibitem[{The}22]{sagemath}
{The Sage Developers}.
\newblock {\em {S}ageMath, the {S}age {M}athematics {S}oftware {S}ystem
  ({V}ersion 9.7)}, 2022.
\newblock {\tt https://www.sagemath.org}.

\bibitem[Voi13]{V2013}
John Voight.
\newblock Identifying the matrix ring: algorithms for quaternion algebras and
  quadratic forms.
\newblock {\em Developments in Mathematics}, 31:255--298, 2013.

\bibitem[Voi21]{Voight}
John Voight.
\newblock {\em Quaternion algebras}, volume 288 of {\em Graduate Texts in
  Mathematics}.
\newblock Springer, Cham, 2021.

\bibitem[vzGG13]{vzGG}
Joachim von~zur Gathen and J\"{u}rgen Gerhard.
\newblock {\em Modern computer algebra}.
\newblock Cambridge University Press, Cambridge, third edition, 2013.

\bibitem[Wes22]{Wes22}
Benjamin Wesolowski.
\newblock {The supersingular isogeny path and endomorphism ring problems are
  equivalent}.
\newblock In {\em {FOCS 2021 - 62nd Annual IEEE Symposium on Foundations of
  Computer Science}}, Denver, Colorado, United States, February 2022.

\end{thebibliography}
\end{document}